\newtheorem{theorem}{Theorem}
\newtheorem{lemma}{Lemma}
\newtheorem{prop}{Proposition}
\newtheorem{definition}{Definition}
\newcommand{\R}{\mathbb R}
\definecolor{green}{rgb}{0.0, 0.75, 0.25} 
\def\eea{\end{eqnarray}}
\renewcommand{\emph}{\textit}
\newcommand{\bl}{\boldsymbol{\lambda}}
\newcommand{\bp}{\boldsymbol{p}}
\newcommand{\up}{\underline{p}}
\newtheorem{remark}{Remark}
\DeclareMathOperator{\proj}{proj}
\begin{document}

\title{On an abrasion motivated fractal model}

\author[1]{Bal\'azs B\'ar\'any\thanks{BB acknowledges support from grant NKFI~FK134251 and grant NKFI KKP144059 "Fractal geometry and applications" Research Group.}}
\author[2,3]{G\'abor Domokos\thanks{GD acknowledges support from the NKFIH Hungarian Research Fund grant 134199 and grant BME FIKP-V\'IZ}}
\author[4]{\'Agoston Szesztay\thanks{\'ASz is supported by the Doctoral Excellence Fellowship Programme (DCEP) is funded by the National Research Development and Innovation Fund of the Ministry of Culture and Innovation and the Budapest University of Technology and Economics under a grant agreement with the National Research, Development and Innovation Office.}}
\affil[1]{Department of Stochastics, Institute of Mathematics, Budapest University of Technology and Economics, Műegyetem rpk. 1-3., Budapest, Hungary, H-1111}
\affil[2]{Department of Morphology and Geometric Modeling, Budapest University of Technology and Economics, Műegyetem rpk. 1-3., Budapest, Hungary,	H-1111}
\affil[3]{HUN-REN--BME Morphodynamics Research Group, Budapest University of Technology and Economics, Műegyetem rpk. 1-3., Budapest, Hungary, H-1111}
\affil[4]{Department of Mechanics, Materials and Structures, Budapest University of Technology and Economics, Műegyetem rpk. 1-3., Budapest, Hungary,	H-1111}

	\maketitle
	
	\begin{abstract}
		In this paper, we consider a fractal model motivated by the abrasion of convex polyhedra, where the abrasion is realised by chipping small neighbourhoods of vertices. {After providing a formal description of the successive chippings, we show that the net of edges converge to a compact limit set under mild assumptions. Furthermore, we study the upper box-counting dimension and the Hausdorff dimension of the limiting object of the net of edges after infinitely many chipping.}
	\end{abstract}

	
	\section{Introduction}




\subsection{Motivation}
Abrasion under collisions (also called \emph{collisional abrasion} or \emph{chipping}) is one of the main geological processes governing the evolution of natural shapes, ranging from pebbles to asteroids \cite{domokos_2014, domokos_2009, szabo_universal_2018}. The process is driven by a sequence of discrete collisions where  \emph{abraded particle} collides with \emph{abraders}. Based on their energy,
collisions emerge in three well-separated phases \cite{pal_2021}: large energy collisions belong to the \emph{fragmentation phase}, where cracks propagate through the entire particle, which is ultimately split into several parts of comparable volume. Medium energy collisions belong to the \emph{cleavage phase}, where the removed volume is smaller, but the crack propagates into the interior of the particle. Finally, in the 
\emph{abrasion phase} (also called the chipping phase), we consider small energy collisions where cracks remain in the vicinity of the surface and a small portion of the material is removed.

Geometric models of the high energy (fragmentation) phase and of the low energy (abrasion)  phase have been studied in considerable detail. In the case of fragmentation, geometric models consider the bisection of convex polyhedra by random planes and study the combinatorial and metric properties of the descendant polyhedra \cite{barany_2024, Domokos18178, domokos_universality_2015}. In the case of abrasion, 
considering the limit where collision energy approaches zero led to the study of mean field geometric partial differential equations (PDEs) \cite{Bloore1977, Domokos_Gibbons_2012, Firey1974, Hamilton_Wornstones}, 
describing the evolution of shape as a function of continuous time. If one considers the original collision process associated with finite impact energies, then discrete time evolution models appear to be a natural choice \cite{domokos_2014, pal_2021}. While no rigorous result is known that connects discrete-time models to PDE models, their predictions show a very good qualitative match \cite{domokos_2014, pal_2021}, suggesting that the geometric study of discrete-time collision models may shed light on general features of shape evolution.

While the discreteness of shape evolution models referred so far only to time, in such models, convex polyhedra are the natural choice as geometric approximations of the studied particle. This choice is natural not only because (as we outline below) discrete time steps are best understood
on discrete geometric objects but also because the 3D scanned images of particles on which computer
codes can operate are also polyhedral objects \cite{Ludmany_Domokos_(2018)}.

The low energy, abrasion phase, geometric models of collisions are truncations of the polyhedral model, which remove small portions of its volume. If the latter is sufficiently small, then, from the combinatorial point of view, 
we can distinguish between three kinds of local events where (a) one vertex is removed and one face is created, (b) one edge is removed and one face is created and (c) one face retreats parallel to itself and the combinatorial structure of the polyhedron remains invariant. These three local events do not differ from the point of view of collision energy. However, they differ from the point of view of the relative size and shape of the abraded particle concerning the abrading particle \cite{Bloore1977, Domokos2014, Domokos_Sipos_Varkonyi_2009}. In particular, event (a) corresponds to the case when the abrader is much larger, and event (c) to the case when the abrader is much smaller than the abraded particle. While none of these three events can, on its own, fully capture collisional shape evolution in the low energy (abrasion) phase. Still, the individual study of these events can provide both geometric and physical insight. Moreover, in some cases, one single event reproduces
global shape features with remarkable accuracy.

Our goal is the detailed geometric description of the event (a) when one vertex is removed in each step of the shape evolution process. Such discrete steps are called \emph{chipping events} \cite{Novak-Szaboeaao4946, Redner_Krapivsky_2007}. In our paper, we will remove \emph{all} vertices simultaneously, and we refer to this collective event as a single chipping event. The planar version of the chipping event was studied earlier in \cite{Redner_Krapivsky_2007}, revealing the emergence of fractal-like contours. Our goal is to
offer a full and rigorous geometric study of this phenomenon in three dimensions. As noted above, event (a) corresponds to the case where the abrading object is much larger than the abraded particle, and this is a realistic approximation of pebbles carried in mountain rivers and evolving under collisions with the riverbed. Figure \ref{fig:example} shows an andesite rock that has been abraded in the Poprad River in the Tatra mountains. As a visual comparison, we show a polyhedron with 2912 faces, which was produced from a cube via six consecutive chipping events. Figure \ref{fig:chop0} shows the vicinity of one vertex of the cube as well as the Apollonian gasket for visual comparison.

\begin{figure}[h]
	\centering
	\includegraphics[width=10cm]{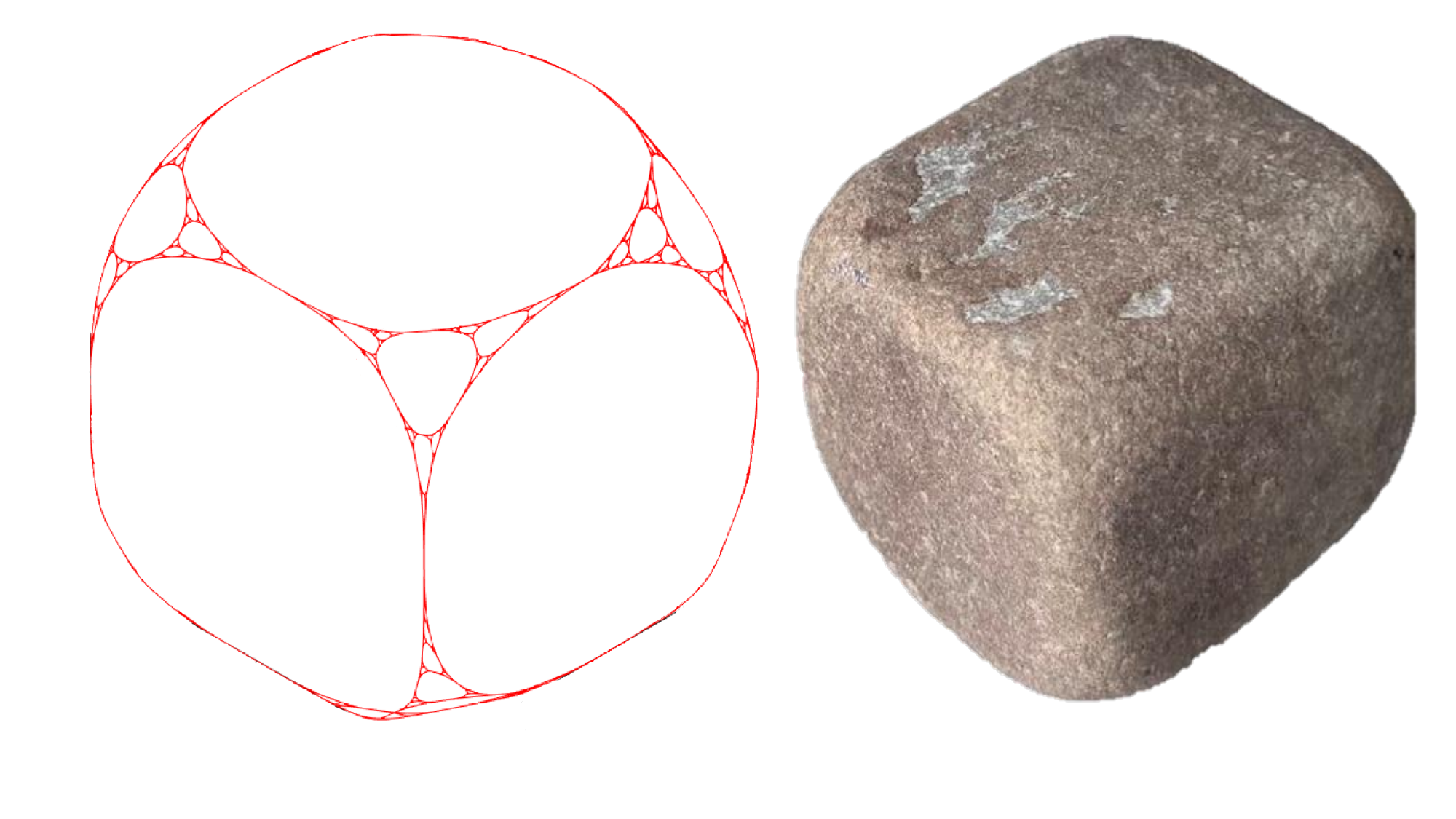} 
	\caption{Left: Cube after six iterative chipping events with random orientation, having 2912 faces. Right: Moderately abraded andesite rock recovered from the Poprad River at the foot of the High Tatra mountains.}\label{fig:example}
\end{figure}

    \begin{figure}[ht]
		\centering
		\includegraphics[height=5cm]{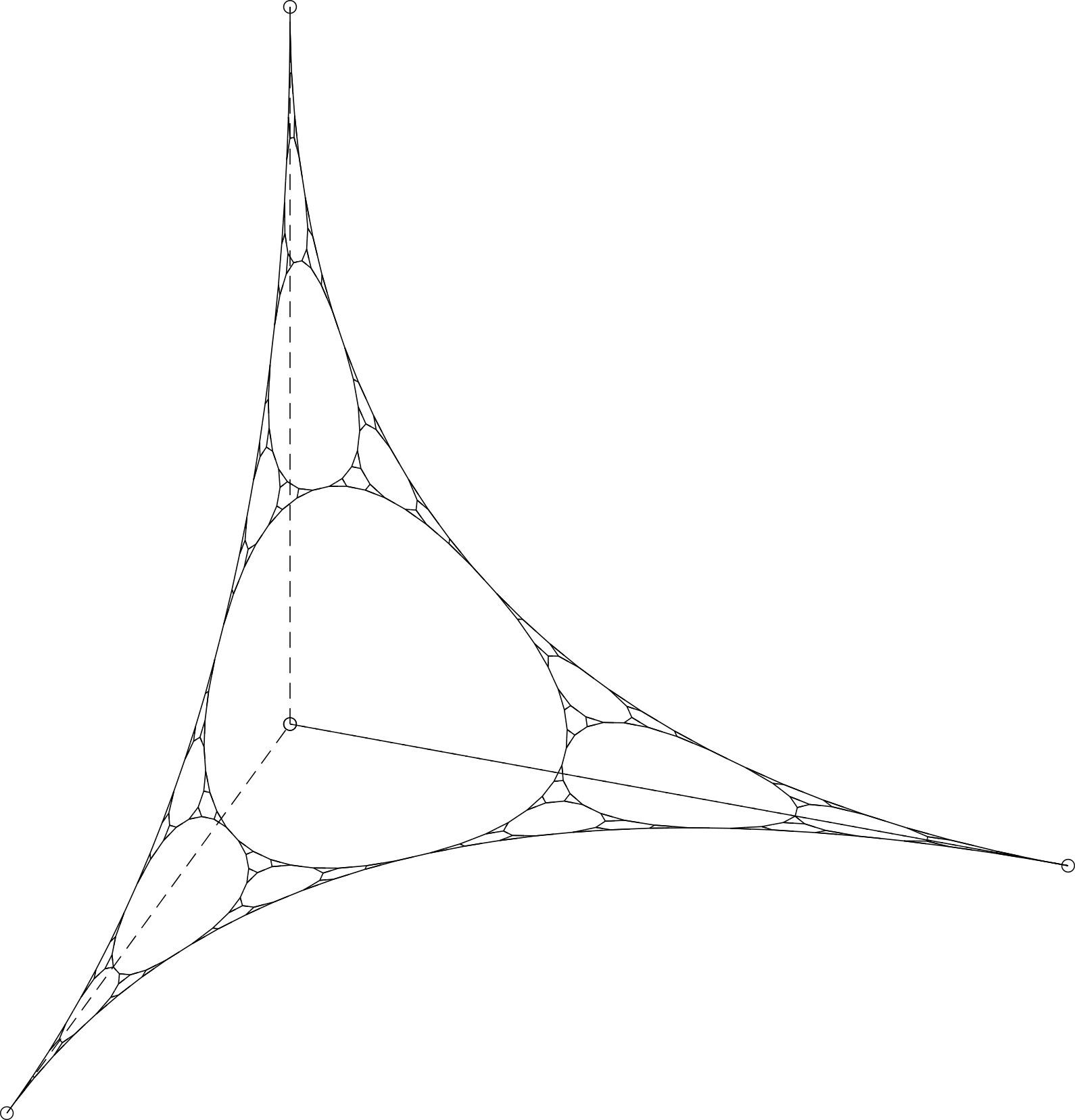}
            \includegraphics[height=5cm]{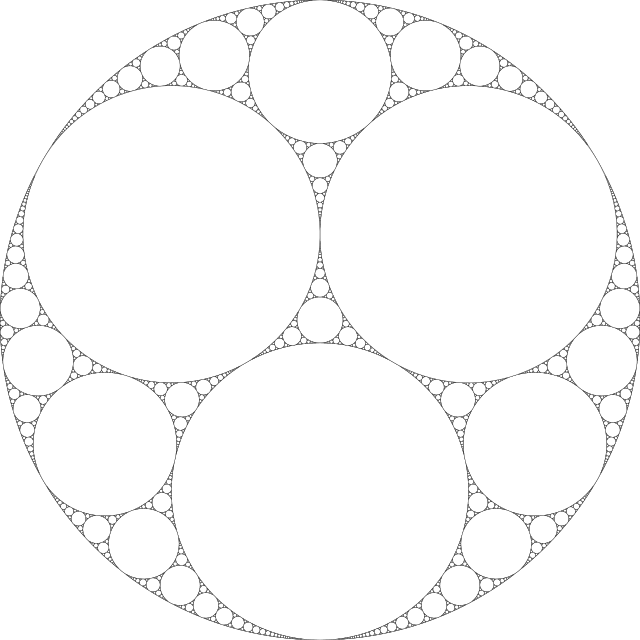}
            \caption{left: Figure of a simulation of chipping in the proximity of an original vertex of the initial polyhedron. right: the Apollonian gasket 
            }\label{fig:chop0}
	\end{figure}

 Motivated by this visual analogy, we are interested in the geometric description of the limit where the number of chipping events approaches infinity. In this limit, the polyhedron (more precisely, its edge network) approaches a fractal-like object, and our {first} main result (Theorem \ref{thm:main}) determines the box-counting dimension of this object. {Based on the recent results of \cite{BHR,MS}, the Hausdorff dimension of that object in a less generic case is also calculated (Theorem~\ref{thm:main2}).}
	
 In fractal geometry, one of the cardinal questions is the dimension of the object under consideration. There are several different kinds of dimensions that are devoted to measuring how much the fractal set is spread. An advantage of the box-counting dimension is that there are available methods that allow us to study the dimension of actual 3D scanned images of particles, but unfortunately, these methods might give some relatively good approaches only at certain scales.

 It turns out that our model is strongly connected to the self-affine sets, which have been extensively studied in the last decades; see \cite{BHR, Fal1, Fal2, HR, JPS, R}. The dimension theory of such objects is highly non-trivial. For instance, even in cases where a formula for the value of the dimension is known, it cannot be calculated explicitly, only implicitly, and it can be approximated well only in some cases; see \cite{Morris22, PoVy15}. This is due to the extremely difficult structure of the group of matrix products.
 
 In our case, this difficulty arises as well. Namely, we can give only an implicit formula for the box-counting dimension, which depends only on the chippings. That is, the value of the dimension is independent of the initial polyhedron.

The structure of the paper is as follows: In Section~\ref{sec:basic}, we give a definition for chipping, and for further analysis, we introduce the local chart representation of simple convex polyhedra, and we define a sequence of iterated function systems (IFS) corresponding to the chipping. Finally, section~\ref{sec:proof} is devoted to the proof of Theorem \ref{thm:main}, {and section~\ref{sec:proof2} is devoted to proof of Theorem~\ref{thm:main2}}.


	\section{The model, the iterated function scheme representation, and the dimension}\label{sec:basic}
	
	\subsection{The chipping model and the limit set}
	
	Let $P$ be a convex polyhedron, that is, let $\mathcal{V}$ be a finite subset of $\R^3$ and let $P=\mathrm{conv}(\mathcal{V})$ be the convex hull of $\mathcal{V}$ such that every point of $\mathcal{V}$ is an extremal point of $P$. We call $\mathcal{V}=\mathcal{V}(P)$ the set of {\it vertices} of $P$. Furthermore, let $\mathcal{E}=\mathcal{E}(P)$ be the set of {\it edges}. That is, for any two distinct $A, B\in\mathcal{V}$, $$[A, B]=\{tA+(1-t)B:0\leq t\leq1\}\in\mathcal{E}(P)$$ if and only if every $x\in[A, B]$ has a unique representation by the convex combination of vertices using only $A$ and $B$. Let $H$ be the net of edges, i.e. $H=H(P)=\bigcup_{[A,B]\in\mathcal{E}(P)}[A,B]$.
	
	Let us index the vertices of $P$ by a finite set $I=I(P)$, i.e. $\mathcal{V}(P)=\{A_i:i\in I(P)\}$. {For simpler notations, in some cases, we refer directly to members of $\mathcal{V}(P)$ by their indices. } Let $E(P)\subseteq I(P)\times I(P)$ be such that $(i,j)\in E(P)$ if and only if $[A_i,A_j]\in\mathcal{E}(P)$. We use the convention that $E(P)$ is symmetric, i.e., $(i,j)\in E(P)$ if and only if $(j,i)\in E(P)$. For a $j\in I$, let $N(j)$ be the set vertices, which are the neighbours of $j$, that is, $i\in N(j)$ if and only if $(j,i)\in E(I)$. We call the convex polyhedron $P$ {\it simple} if $\#N(j)=3$ for every $j\in I(P)$. Furthermore, let $\mathcal{F}(P)$ be the set of faces of the polyhedron $P$, and for an $S\in\mathcal{F}(P)$, denote by $\mathbf{n}(S)$ a unit normal vector perpendicular to $S$. Finally, for a $j\in I$, let $M(j)$ be the set of faces $S$ such that $A_j$ is a vertex of $S$.
	
	Now, we will define the way how a convex polyhedron evolves under the chipping algorithm. Let $j\in I(P)$ and let $\mathbf{n}_j$ be a vector such that $A_j+\varepsilon\mathbf{n}_j$ is an interior point of $P$ for every sufficiently small $\varepsilon>0$, and $\mathbf{n}_j$ is not parallel with any $\mathbf{n}(S)$ for every $S\in M(j)$. Let $\Psi_j$ be the hyperplane with normal vector $\mathbf{n}_j$, going through the point $A_j+\varepsilon_j\mathbf{n}_j$, and let $\Phi_j$ be the closed half-space determined by $\Psi_j$ such that $A_j$ is an interior point of $\Phi_j$, where $\varepsilon_j$ are chosen such that $(P \cap \Phi_j) \cap (P \cap \Phi_k) = \emptyset$ for every $j\neq k\in I(P)$. By chipping, we mean the removal of such $P \cap \Phi_j$ pyramids from all vertices of $P$ and the new chipped polyhedron is $\overline{\bigcup_{i\in I(P)}P\setminus\Phi_i}$. \\By simple geometric arguments, it is easy to see that the chipping of vertices generates a simple polyhedron. Thus, from now on, we will always assume without loss of generality that $P$ is simple. Furthermore, for the chipping of simple polyhedra, we can give the following simpler definition: 
	
	\begin{definition}[chipping]\label{def:chipping} Let $P$ be a simple convex polyhedron with vertices $\mathcal{V}(P)$ indexed by $I$ and edges $\mathcal{E}(P)$ indexed by $E(P)\subseteq I(P)\times I(P)$. Let $\underline{p}=(p_{ij})_{(i,j)\in E(P)}$ be a vector of positive reals such that for every $i,j\in I(P)$ with $(i,j)\in E(I)$, $p_{ij}\in(0,1)$ and $p_{ij}+p_{ji}<1$. Let us call the vector $\underline{p}$ the {\normalfont chipping rate} vector. We define the chipped polyhedron $\mathcal{C}_{\underline{p}}(P)$ as follows: let the set of vertices
		$$
		\mathcal{V}(\mathcal{C}_{\underline{p}}(P)):=\{A_j+p_{ji}\overrightarrow{A_jA_i}:i,j\in I(P)\text{ such that }(i,j)\in E(P)\},
		$$
		and $\mathcal{C}_{\underline{p}}(P)=\mathrm{conv}(\mathcal{V}(\mathcal{C}_{\underline{p}}(P)))$.
		
		Let us index the vertices of $\mathcal{C}_{\underline{p}}(P)$ by $E(P)$, that is, let $I(\mathcal{C}_{\underline{p}}(P))=E(P)$ and $A_{ji}:=A_j+p_{ji}\overrightarrow{A_jA_i}$. 
	\end{definition}

	During the shape evolution of a simple polyhedron $P$ when one of its vertices $A_j$, which was in connection with $A_i, A_k$ and $A_l$, is chipped, it is replaced by a new face composed of three new vertices, which are named after the vertices they are created between $A_{ji}, A_{jl}$ and $A_{jk}$. {In these new vertices, the index of the chipped vertex, which is replaced by a new face, is noted first.} Chipping planes can only intersect edges starting from the chipped vertex. The new vertices are placed on the edges according to proportion $p_{ji}, p_{jl}, \text{ and } p_{jk}$. Similarly, chipping $A_{ji}$ creates the new vertices $A_{jiij}, A_{jijk}, A_{jijl}$ dividing the edges according to $p_{jiij}, p_{jijl}$, and  $p_{jijk}$, etc. Moreover, it is easy to see that any $A_{ji},A_{kl}\in\mathcal{V}(\mathcal{C}_{\underline{p}}(P))$, $[A_{ji},A_{kl}]\in\mathcal{E}(\mathcal{C}_{\underline{p}}(P))$ if and only if $j=k$ or $j=l$ and $i=k$.

	
	Let $j\in I(\mathcal{C}_{\underline{p}}(P))$. There exist a unique $j'\in I(P)$ and a unique $i'\in N(j')$ such that $j=j'i'$. We call $j'$ the {\it mother} of $j$, and we call $i'$ the {\it father} of $j$. Furthermore, If $P$ is simple, then $k\in N(j)$ if and only if $k=i'j'$ or there exists $i'\neq k'\in N(j')$ such that $k=j'k'$. We call the vertex $i'j'\in N(j'i')$ the {\it sibling} of $j=j'i'$. Further, we call the vertices $j'k',j'l'\in N(j'i')$, where $k',l'\in N(j')$ for $k'\neq i'$ and $l'\neq i'$, the {\it cousins} of $j'i'$. Let us denote the index of the sibling of $j$ by $s(j)$.  {For a visual representation of the chipping in a neighbourhood of a vertex and for the family relations, see Figure~\ref{fig:chop}.}
	
	\begin{figure}[h]
		\centering
		\includegraphics[width=10.5cm]{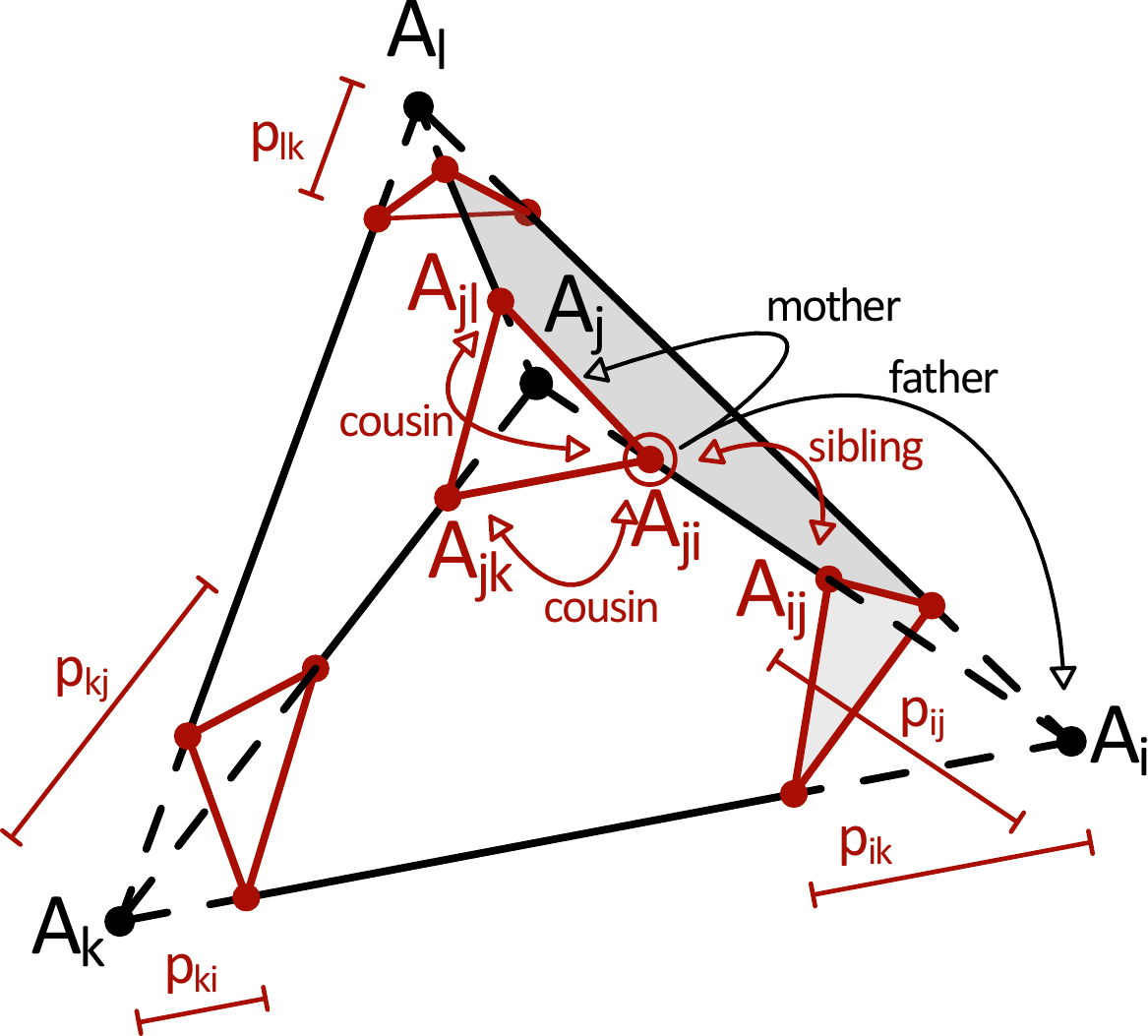}
            \caption{Chipping of a tetrahedron. Black colour refers to the initial tetrahedron $P_0$, and red refers to the effect of chipping and chipping rates. {Arrows indicate the family relationships of the $A_{ji}$.}}\label{fig:chop}
	\end{figure}
 Note that the advantage of the described indexation is that there is a one-to-one correspondence between the indexes $E(P)$ of edges of $P$ and the vertices $I(\mathcal{C}_{\underline{p}}(P))$ of $\mathcal{C}_{\underline{p}}(P)$ since $E(P)$ is symmetric. In particular, $I(\mathcal{C}_{\underline{p}}(P))=E(P)$.
	
	Using the chipping algorithm of Definition~\ref{def:chipping}, we can define a sequence of simple convex polyhedra $P_n$ as follows: For any initial simple, convex  polyhedra $P_0=P$, let $\underline{p}_0$ be a chipping rate as in Definition~\ref{def:chipping}, and let $P_1=\mathcal{C}_{\underline{p}_0}(P_0)$. Suppose that the simple convex polyhedron $P_n$ is defined, then let $\underline{p}_n$ be a chipping rate on $E(P_n)$ and let $P_{n+1}=\mathcal{C}_{\underline{p}_n}(P_n)$. We call the sequence $P_n$ of polyhedra a {\it chipping sequence}.
	
	\begin{remark}
		Note that as $n$ increases by one, the words describing the elements of $I(P_{n})$ are going to be $2^{n}$ long combinations of the indices of $I(P_0)$. Words describing elements of $E(P_{n})$ are twice as long words.
	\end{remark}
	
	The main object of our study is the limit set of the net of edges $H(P_n)$ of the chipping sequence $P_n$. {As we will see, there exists a unique compact set $X$ to which the sequence $H(P_n)$ is converging in some proper sense (Hausdorff metric), and this set shows fractal-like properties strongly related to self-affine sets. For a discussion of this phenomenon, see Section~\ref{sec:boxdim}.}
 
 Let us now define the Hausdorff metric of compact subsets of $\R^3$. For a set $X$ and $\delta>0$, let $$[X]_\delta=\{y\in\R^3:\text{ there exists }x\in X\text{ such that }\|x-y\|<\delta\},$$ where $\|.\|$ denotes the usual Euclidean norm. We define the Hausdorff metric between two compact sets $X, Y\subset\R^3$ 
	$$
	d_H(X,Y)=\inf\{\delta>0:Y\subset[X]_\delta\text{ and }X\subset[Y]_\delta\}.
	$$
	It is well known that the set of compact subsets of $\R^3$ endowed with $d_H$ forms a complete metric space, see \cite[Theorem~3.16]{Fal}. It is easy to see that $d_H(\bigcup_{i=1}^nX_i,\bigcup_{i=1}^nY_i)\leq\max_{i=1,\ldots,n}d_H(X_i,Y_i)$ and for any Lipschitz map $f$ with Lipschitz constant $C>0$, $d_H(f(X),f(Y))\leq Cd_H(X,Y)$. 
	
	\begin{prop}\label{prop:limitset}
		Let $P_n$ be a chipping sequence such that there exists a $\delta>0$ such that for every $n\geq1$ and $i\in I(P_n)$, $\delta<p_{i}$ and for every $j\in N(i)$, $p_{i}+p_{j}<1-\delta$. Then, there exists a unique compact set $X$ such that $d_H(H(P_n), X)\to0$ as $n\to\infty$, where $H(P)$ is the net of edges of $P$.
	\end{prop}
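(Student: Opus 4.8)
The plan is to prove that the sequence $(H(P_n))_{n\ge 0}$ is Cauchy in the complete metric space of compact subsets of $\R^3$ equipped with the Hausdorff metric; by completeness this yields at once a unique compact set $X$ with $d_H(H(P_n),X)\to 0$. Hence it suffices to exhibit a summable upper bound for $d_H(H(P_n),H(P_{n+1}))$.

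First I would compare $H(P_{n+1})$ with $H(P_n)$ edge by edge, using the combinatorial description of $\mathcal{C}_{\underline{p}_n}(P_n)$ recalled above. The edges of $P_{n+1}$ are of two kinds: the \emph{remnants} $[A_{ij},A_{ji}]$, each a subsegment of the $P_n$-edge $[A_i,A_j]$ and therefore already contained in $H(P_n)$; and the \emph{new edges} $[A_{ji},A_{jk}]$ lying on the triangular face that replaces a chipped vertex $A_j$. A point of such a new edge is a convex combination of $A_{ji}=A_j+p_{ji}\overrightarrow{A_jA_i}$ and $A_{jk}=A_j+p_{jk}\overrightarrow{A_jA_k}$, hence lies within $\ell_n:=\max\{|A_xA_y|:[A_x,A_y]\in\mathcal{E}(P_n)\}$ of the point $A_j\in H(P_n)$. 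Conversely, chipping deletes from each $P_n$-edge only its two end pieces, each of length $\le\ell_n$, so every point of $H(P_n)$ lies within $\ell_n$ of the surviving remnant, hence of $H(P_{n+1})$. Altogether $d_H(H(P_n),H(P_{n+1}))\le\ell_n$.

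The heart of the matter — and the step I expect to be the main obstacle — is to show that these edge lengths decay geometrically, $\ell_n\le C(1-\delta)^n$. This is not visible from $\ell_n$ alone, because the estimate above allows a new edge $[A_{ji},A_{jk}]$ to be nearly as long as $|A_i-A_k|$, and $A_i,A_k$ need not span an edge of $P_n$, so a priori this distance is controlled only by $\diam(P_0)$. The remedy is to iterate a slightly larger quantity $c_n$, the maximum of $\ell_n$ together with all distances $|A_x-A_y|$ over pairs of vertices $x,y$ of $P_n$ having a common neighbour. A short case analysis over the edge types and common-neighbour types of $P_{n+1}$ then shows that each such pair is either a subsegment of a $P_n$-edge (shrinking by the factor $1-p_{ij}-p_{ji}<1-\delta$), or has difference vector of the form $pu-qv$, where $u,v$ are $P_n$-edge vectors issuing from one common vertex $A_j$ (so $|u|,|v|\le\ell_n\le c_n$), where $u-v$ joins two vertices sharing the neighbour $A_j$ (so $|u-v|\le c_n$), and where each of $p,q$ is a chipping ratio $p_{\cdot\cdot}$ or its complement $1-p_{\cdot\cdot}$, in either case $\le 1-\delta$. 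The elementary inequality $|pu-qv|\le\max(p,q)\,R$, valid whenever $|u|,|v|,|u-v|\le R$ and $p,q\ge 0$ (assuming $p\ge q$, write $pu-qv=q(u-v)+(p-q)u$), then gives $c_{n+1}\le(1-\delta)c_n$, whence $\ell_n\le c_n\le(1-\delta)^nc_0$ with $c_0\le\diam(P_0)$.

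Combining the two estimates, $\sum_{n\ge 0}d_H(H(P_n),H(P_{n+1}))\le c_0\sum_{n\ge 0}(1-\delta)^n<\infty$, so $(H(P_n))$ is Cauchy and converges in $d_H$ to a unique compact set $X$. The only properly geometric ingredient used is the trivial bound $|A_x-A_y|\le\diam(P_0)$ for all vertices of $P_n$; the rest is bookkeeping, the subtle point being to iterate $c_n$ rather than $\ell_n$ as the quantity that actually contracts.
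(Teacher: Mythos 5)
Your proof is correct, and it takes a genuinely different route from the paper. The paper proves the proposition as a byproduct of its chart/IFS machinery: it encodes each chipping step by the affine maps $G_{j',j}$ with matrices $C_j$, shows $\|C_j\|_1\le 1-\delta$ (Lemma~\ref{lem:contr}), forms the nested compact sets $X_n=\bigcup_{F\in\mathcal{F}^{(n)}}F(W)$ ($W$ the tetrahedron), sets $X=\bigcap_n X_n$, and then checks both $d_H(X_{n-1},X_n)$ and $d_H(H(P_n),X_n)\le\max_F\diam(F(W))$ decay geometrically. You instead work directly with the edge nets: the bound $d_H(H(P_n),H(P_{n+1}))\le\ell_n$ is right (remnant edges lie in $H(P_n)$, new face edges stay within $\ell_n$ of the chipped vertex, and each old edge loses only end pieces of length $\le\ell_n$), and your key device — augmenting $\ell_n$ to $c_n$ by including distances between vertices with a common neighbour, then using $\|p\underline{u}-q\underline{v}\|\le\max(p,q)\max\{\|\underline{u}\|,\|\underline{v}\|,\|\underline{u}-\underline{v}\|\}$ with coefficients that are either chipping ratios or their complements, hence $\le 1-\delta$ by regularity — correctly yields $c_{n+1}\le(1-\delta)c_n$ after checking the three common-neighbour pair types around a new vertex $A_{ji}$ (sibling–cousin and cousin–cousin), exactly the role played in the paper by the $1$-norm column-sum estimate $\max\{1-p_j-p_{s(j)},\,p_j+p_k\}\le 1-\delta$. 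Completeness of the hyperspace then gives the limit. What your argument buys is a self-contained, elementary proof with an explicit rate $d_H(H(P_n),X)\lesssim\diam(P_0)(1-\delta)^n$, needing none of the adapted-chart formalism; what the paper's route buys is the representation $X=\bigcap_n\bigcup_{F\in\mathcal{F}^{(n)}}F(W)$ produced along the way, which is precisely the structure its later dimension results (Theorems~\ref{thm:main} and~\ref{thm:main2}) are built on, so the extra machinery is not wasted there even though it is heavier for the proposition alone.
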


	We will give the proof of Proposition~\ref{prop:limitset} later at the end of Section~\ref{sec:existence}. In Figure~\ref{fig:example}, one can see the comparison of some realisations of the limit set of the chipping sequence and an abraded andesite rock found in Poprad River.

	For short, we say that the chipping rates $\bp=(\underline{p})_{n=1}^\infty$ are {\it regular} if there exists a $\delta>0$ such that for every $n\geq1$ and $i\in I(P_n)$, $\delta<p_{i}$ and for every $j\in N(i)$, $p_{i}+p_{j}<1-\delta$. The regularity of the chipping sequence roughly means that there is some fixed percentage, such that at least that percentage of every edge is chipped in a neighbourhood of a vertex, but a fixed percentage of every edge is kept.

	\subsection{Local representation of simple convex polyhedra} 
	
	Let $P$ be a simple convex polyhedron. Now, we define a local representation of the edge net $H(P)$ of $P$ by affine mappings. Let us denote the usual orthogonal basis of $\mathbb{R}^3$ by $\{e_1,e_2,e_3\}$. Let $L=\bigcup_{i=1}^3[0,e_i]$.
	
	\begin{definition}[Local chart]
		Let $P$ be an arbitrary, simple, convex polyhedron. Let $\bl=(\lambda_{ij})_{(i,j)\in E(P)}$ be a vector of positive reals such that for every $i,j\in I(P)$ with $(i,j)\in E(I)$, $\lambda_{ij}\in(0,1)$ and $\lambda_{ij}+\lambda_{ji}=1$.
		
		Furthermore, for every $j\in I(P)$, let $\sigma_j\colon\{1,2,3\}\to N(j)$ be a permutation of the {neighbouring vertices } of $j$ and let $\sigma=(\sigma_j)_{j\in I(P)}$. Let us define the matrix \begin{equation}\label{eq:Lambda}\Lambda_{j,\sigma,\bl}. =\begin{bmatrix}
			\lambda_{j\sigma_j(1)} & 0 & 0 \\ 0 & \lambda_{j\sigma_j(2)} & 0 \\ 0 & 0 & \lambda_{j\sigma_j(3)}
		\end{bmatrix}.
\end{equation}
		Let  $F_{j,\sigma,\bl}\colon\R^3\mapsto\R^3$ be such that
		$$
		F_{j,\sigma,\bl}(x)=\begin{bmatrix} \overrightarrow{A_jA_{\sigma_j(1)}} & \overrightarrow{A_jA_{\sigma_j(2)}} & \overrightarrow{A_jA_{\sigma_j(3)}}\end{bmatrix}\Lambda_{j,\sigma,\bl} x+A_j,
		$$
		where $\overrightarrow{AB}$ denotes the vector with initial $A$ and endpoint $B$. We call $F_{j,\sigma,\bl}$ the {\normalfont local chart map} of $j$ with permutation $\sigma$ and rate $\bl$, and we call $F_{j,\sigma,\bl}(L)$ the {\normalfont local {neighbourhood}} of $j\in I(P)$.
	\end{definition}

For short, let $\boldsymbol{A}_{j,\sigma}:=\begin{bmatrix}\overrightarrow{A_{j}A_{\sigma_{j}(1)}} & \overrightarrow{A_{j}A_{\sigma_{j}(2)}} & \overrightarrow{A_{j}A_{\sigma_{j}(3)}}\end{bmatrix}$. By definition, $F_{j,\sigma,\bl}(0)=A_j$ and $F_{j,\sigma,\bl}(\underline{e}_\ell)=\lambda_{j\sigma(\ell)}\cdot\overrightarrow{A_jA_{\sigma(\ell)}}+A_j$ for any $\ell=1,2,3$. Hence, $H(P)=\bigcup_{j\in I(P)}F_{j,\sigma,\bl}(L)$. Furthermore, we call the set of functions $\mathcal{F}_{\sigma,\bl}=\{F_{j,\sigma,\bl}\}_{j\in I(P)}$ as the {\it chart} of $H(P)$ with respect to the permutations $\sigma$ and rate $\bl$. For a visual representation of the local charts, see Figure~\ref{fig:localchart}.

	\begin{figure}[h]
	\centering
	\includegraphics[width=13cm]{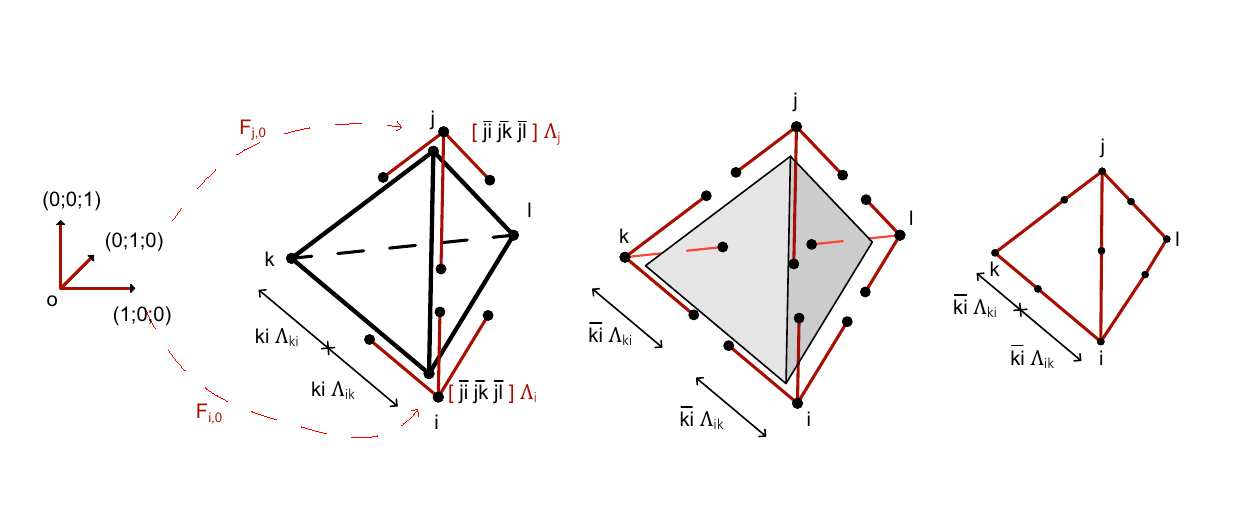} \caption{Cover of $H(P)$ by neighbourhoods.}\label{fig:localchart}
\end{figure}

\subsection{Adapted charts}

Let $P$ be a simple convex polyhedron and let $\{F_{j,\sigma,\bl}\}_{j\in I(P)}$ be a chart of $H(P)$ with permutations $\sigma=(\sigma_j)_{j\in I(P)}$ and rates $\bl=(\lambda_{ij})_{(i,j)\in E(P)}$. We can use a chart effectively during the procedure of chipping if the closest cutting points on the edges of a vertex belong to the neighbourhood of the vertex. In the following, we will define how the chart of $P$ adapted to the chipping $\mathcal{C}_{\underline{p}}(P)$ in such a way.

\begin{definition}[Adapted charts]
 Let $P$ be a simple convex polyhedron, and let $\underline{p}=(p_{ij})_{(i,j)\in E(P)}$ be a chipping rate. Let $\mathcal{F}_{\sigma',\bl'}'=\{F_{j',\sigma',\bl'}\}_{j'\in I(P)}$ be a chart of $H(P)$, and let $\mathcal{F}_{\sigma,\bl}=\{F_{j,\sigma,\bl}\}_{j\in I(\mathcal{C}_{\underline{p}}(P))}$ be a chart of $H(\mathcal{C}_{\underline{p}}(P))$.
 
 We say that the chart $\mathcal{F}_{\sigma,\bl}$ is {\normalfont adapted} to $\mathcal{F}_{\sigma',\bl'}'$ if for every $j\in I(\mathcal{C}_{\underline{p}}(P))$ with mother $j'\in I(P)$ with and father $i'\in I(P)$ such that $N(j')=\{i',k',l'\}$, we have that $N(j)=\{i'j',j'k',j'l'\}$ and $\sigma_{j}((\sigma_{j'}')^{-1})(i')=i'j'$, $\sigma_{j}((\sigma_{j'}')^{-1})(k')=j'k'$ and $\sigma_{j}((\sigma_{j'}')^{-1})(l')=j'l'$, moreover,
	\begin{multline*}
	\Lambda_{j',\sigma',\bl'}=\begin{bmatrix}
		p_{j'\sigma'_{j'}(1)} & 0 & 0 \\
		0 & p_{j'\sigma'_{j'}(2)} & 0 \\
		0 & 0 & p_{j'\sigma'_{j'}(3)}
	\end{bmatrix}+\\
\begin{bmatrix}
	1-p_{\sigma'_{j'}(1)j'}-p_{j'\sigma'_{j'}(1)} & 0 & 0 \\
	0 & 1-p_{\sigma'_{j'}(2)j'}-p_{j'\sigma'_{j'}(2)} & 0 \\
	0 & 0 & 1-p_{\sigma'_{j'}(3)j'}-p_{j'\sigma'_{j'}(3)}
\end{bmatrix}\Lambda_{j,\sigma,\bl}.
	\end{multline*}
\end{definition}

In particular, $\sigma$ being adapted to $\sigma'$ means that $\sigma$ gives the same position to the neighbours of $j$ as the permutation $\sigma'$ of the mother vertex of $j$ to the father vertices. Clearly, if the chart is adapted to the chipping, then for every $i,j\in I(P)$ with $(i,j)\in E(P)$, $p_{ij}<\lambda_{ij}$.

For a simple convex polyhedron $P$ and for $j,i\in I(P)$ with $(i,j)\in E(P)$, let us define a {\it sibling sequence} with respect to a chipping sequence $P_{n}=\mathcal{C}_{\underline{p}_n}(P_{n-1})$ with $P_0=P$ and chipping rates $\underline{p}=(p_{ij})_{(i,j)\in E(P_n)}$ as follows: let $j_0=j$ and $i_0=i$. By induction, if $j_n,i_n\in I(P_n)$ such that $(i_n,j_n)\in E(P_n)$ is defined then let $j_{n+1}=j_ni_{n}\in I(P_{n+1})$ and $i_{n+1}=i_{n}j_n\in I(P_{n+1})$, and by the definition of the chipping, $(i_{n+1},j_{n+1})\in E(P_{n+1})$. Clearly, the sibling of $j_n$ is $s(j_n)=i_n$ for every $n\geq1$. For the first two steps of the sibling sequence and the length of the edge between them, see Figure~\ref{fig:sibling}.

\begin{figure}[h]
	\centering
	\includegraphics[width=10cm]{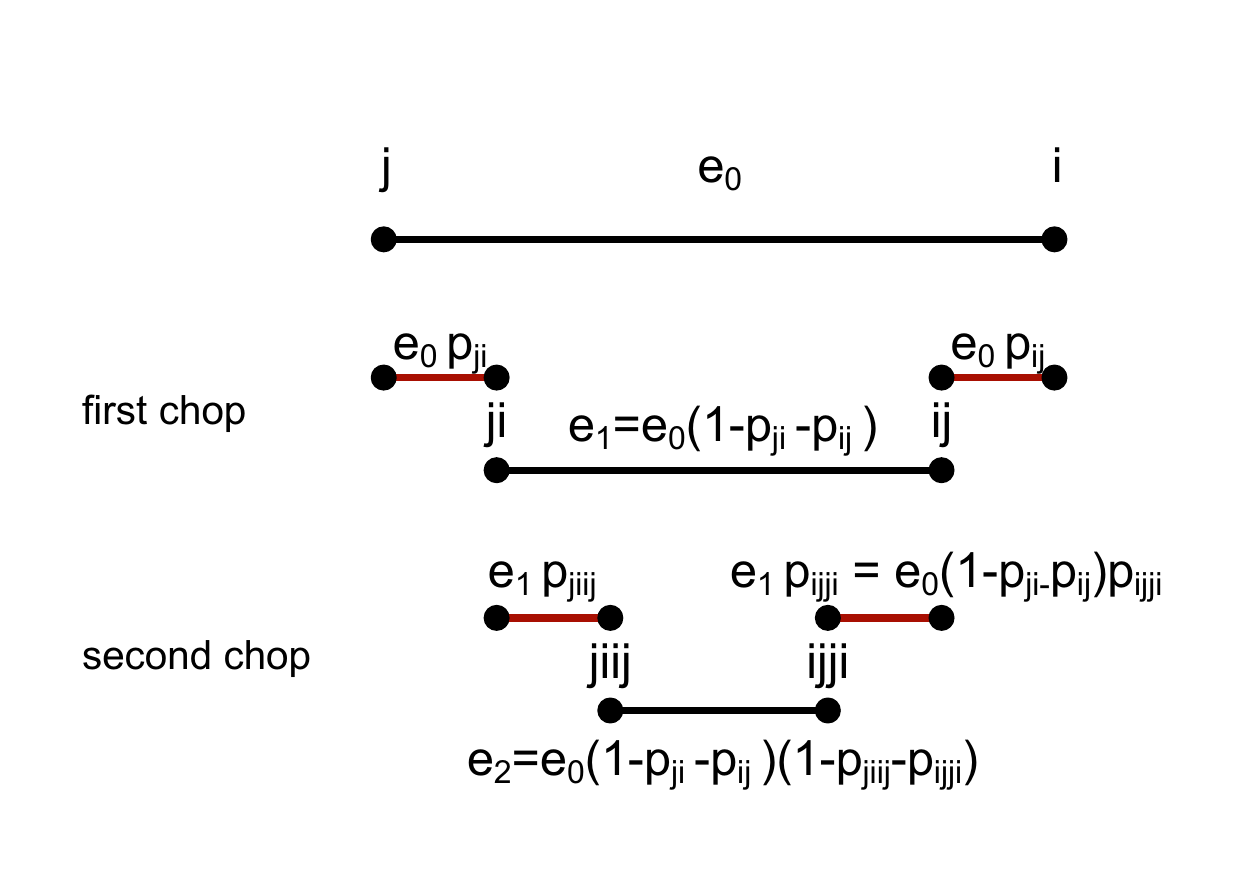} \caption{Effect of chipping on one edge}\label{fig:sibling}
\end{figure}

\begin{lemma}\label{lem:adaptedchart is unique}
	Let $P$ be a simple convex polyhedron, and let $P_{n}=\mathcal{C}_{\underline{p}_n}(P_{n-1})$ be a chipping sequence with $P_0=P$ and chipping rates $\underline{p}_n=(p_{ij})_{(i,j)\in E(P_{n-1})}$. Let $\mathcal{F}^{(n)}_{\sigma^{(n)},\bl^{(n)}}$ be charts of $H(P_n)$ such that $\mathcal{F}^{(n)}_{\sigma^{(n)},\bl^{(n)}}$ is adapted to $\mathcal{F}^{(n-1)}_{\sigma^{(n-1)},\bl^{(n-1)}}$ for every $n\geq1$. Then for every $j,i\in I(P)$ with $(i,j)\in E(P)$ we have
	$$
	\lambda_{ji}=\sum_{k=1}^\infty p_{j_k}\prod_{\ell=1}^{k-1}(1-p_{j_\ell}-p_{s(j_\ell)}),
	$$
	where $j_n$ is the sibling sequence with initial vertices $j$ and $i$.
\end{lemma}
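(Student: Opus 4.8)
The plan is to convert the defining relation of adapted charts into a one–step recursion running along the sibling sequence, then iterate it and take a limit. Fix $j,i\in I(P)$ with $(i,j)\in E(P)$ and let $(j_n,i_n)$ be the associated sibling sequence, so $j_0=j$, $i_0=i$, $j_{n+1}=j_ni_n$, $i_{n+1}=i_nj_n$, $(i_n,j_n)\in E(P_n)$, and $s(j_{n+1})=i_{n+1}$. For each $n$ the vertex $j_{n+1}\in I(P_{n+1})$ has mother $j_n\in I(P_n)$, father $i_n\in N(j_n)$, and sibling $i_{n+1}$, so the adapted–chart relation between $\mathcal{F}^{(n)}_{\sigma^{(n)},\bl^{(n)}}$ and $\mathcal{F}^{(n+1)}_{\sigma^{(n+1)},\bl^{(n+1)}}$ applies directly to $j_{n+1}$.

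First I would extract the recursion. Apply that relation to $j_{n+1}$ (mother $j_n$, father $i_n$), and pick the diagonal index $m\in\{1,2,3\}$ with $\sigma^{(n)}_{j_n}(m)=i_n$. The matrix identity has the shape $\Lambda_{j_n,\sigma^{(n)},\bl^{(n)}}=D_1+D_2\,\Lambda_{j_{n+1},\sigma^{(n+1)},\bl^{(n+1)}}$ with $D_1,D_2$ diagonal, so reading off the $(m,m)$-entry gives $\lambda^{(n)}_{j_ni_n}=(D_1)_{mm}+(D_2)_{mm}\lambda^{(n+1)}_{j_{n+1}\sigma^{(n+1)}_{j_{n+1}}(m)}$, where $(D_1)_{mm}=p_{j_ni_n}$ and $(D_2)_{mm}=1-p_{i_nj_n}-p_{j_ni_n}$. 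The permutation compatibility built into the definition of adapted charts says that $\sigma^{(n+1)}_{j_{n+1}}\circ(\sigma^{(n)}_{j_n})^{-1}$ carries the father $i_n$ to the sibling $i_{n+1}=i_nj_n$, which forces $\sigma^{(n+1)}_{j_{n+1}}(m)=i_{n+1}$. Finally, since as an element of $I(P_{n+1})=E(P_n)$ the vertex $j_{n+1}=j_ni_n$ is exactly the edge $(j_n,i_n)$ and its sibling the edge $(i_n,j_n)$, the abbreviation $p_v:=p_{(\text{edge }v)}$ gives $p_{j_{n+1}}=p_{j_ni_n}$ and $p_{s(j_{n+1})}=p_{i_nj_n}$. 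Hence
\begin{equation}\label{eq:lem-recursion}
\lambda^{(n)}_{j_ni_n}=p_{j_{n+1}}+\bigl(1-p_{j_{n+1}}-p_{s(j_{n+1})}\bigr)\,\lambda^{(n+1)}_{j_{n+1}i_{n+1}},\qquad n\ge0.
\end{equation}

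Next I would iterate \eqref{eq:lem-recursion} from $n=0$. Writing $q_\ell=1-p_{j_\ell}-p_{s(j_\ell)}\in(0,1)$, a straightforward induction gives, for every $N\ge1$,
$$
\lambda_{ji}=\lambda^{(0)}_{j_0i_0}=\sum_{k=1}^{N}p_{j_k}\prod_{\ell=1}^{k-1}q_\ell\;+\;\Bigl(\prod_{\ell=1}^{N}q_\ell\Bigr)\lambda^{(N)}_{j_Ni_N}.
$$
Every term of the series and the remainder are nonnegative, so the partial sums are bounded above by $\lambda_{ji}<1$ and the series converges; it remains to see that the remainder vanishes. Since $\lambda^{(N)}_{j_Ni_N}\in(0,1)$ the remainder is at most $\prod_{\ell=1}^{N}q_\ell$, and under the regularity hypothesis there is $\delta>0$ with $p_{j_\ell},p_{s(j_\ell)}>\delta$, hence $q_\ell<1-2\delta$ and $\prod_{\ell=1}^{N}q_\ell\le(1-2\delta)^N\to0$. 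Letting $N\to\infty$ yields $\lambda_{ji}=\sum_{k=1}^{\infty}p_{j_k}\prod_{\ell=1}^{k-1}q_\ell$, which is the asserted identity; in particular the adapted rates (and hence the adapted chart, up to permutations) are uniquely pinned down by the chipping rates.

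The main difficulty here is bookkeeping rather than analysis: one must be scrupulous about the index and permutation conventions so that the diagonal slot $m$ occupied by the father $i_n$ under $\sigma^{(n)}_{j_n}$ is exactly the slot occupied by the sibling $i_{n+1}$ under $\sigma^{(n+1)}_{j_{n+1}}$ — this is precisely what makes the $(m,m)$-entry of the adapted–chart identity link $\lambda^{(n)}_{j_ni_n}$ to $\lambda^{(n+1)}_{j_{n+1}i_{n+1}}$ cleanly, without the cousin rates leaking in. The only genuinely analytic input is the vanishing of the tail product $\prod_\ell q_\ell$, which is where the regularity of the chipping rates (a uniform positive lower bound) is needed.
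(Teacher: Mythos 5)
Your proof follows essentially the same route as the paper's: the paper also reads the adaptedness identity along the sibling sequence to get $\lambda_{ji}=p_{j_1}+(1-p_{j_1}-p_{s(j_1)})\lambda_{j_1i_1}$ and then says "finish by induction", which is exactly your recursion \eqref{eq:lem-recursion} iterated. The one place you go beyond the paper is the treatment of the tail: you correctly observe that after $N$ steps the error is $\bigl(\prod_{\ell=1}^{N}q_\ell\bigr)\lambda^{(N)}_{j_Ni_N}$ and kill it by invoking a regularity hypothesis ($p_{j_\ell},p_{s(j_\ell)}>\delta$), but note that the lemma as stated assumes only that the $\underline{p}_n$ are chipping rates, not that they are regular, so strictly speaking you are using a hypothesis that is not in the statement. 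This is not a defect of your argument so much as a genuine point the paper's one-line induction glosses over: if $\sum_\ell\bigl(p_{j_\ell}+p_{s(j_\ell)}\bigr)<\infty$ along some sibling sequence, the product $\prod_\ell q_\ell$ has a positive limit, the adapted rates are no longer pinned down by the recursion alone, and the claimed series identity can fail; some condition forcing $\prod_\ell q_\ell\to0$ (regularity suffices, and is indeed assumed wherever the paper later applies the lemma, e.g.\ to get $\delta<\lambda_j<1$) is needed. So your write-up is correct in the setting where the lemma is actually used, and its extra care makes explicit the hidden assumption behind the paper's "can be finished by induction".
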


\begin{proof}
	By definition of adaptedness of charts,
	$$
	\lambda_{ji}=p_{ji}+(1-p_{ji}-p_{ij})\lambda_{jiij}=p_{j_1}+(1-p_{j_1}-p_{s(j_1)})\lambda_{j_1i_1}.
	$$
	The proof can be finished now by induction.
\end{proof}

For a chipping sequence $P_{n}=\mathcal{C}_{\underline{p}_n}(P_{n-1})$, if $\mathcal{F}^{(n)}_{\sigma^{(n)},\bl^{(n)}}$ are charts of $H(P_n)$ such that $\mathcal{F}^{(n)}_{\sigma^{(n)},\bl^{(n)}}$ is adapted to $\mathcal{F}^{(n-1)}_{\sigma^{(n-1)},\bl^{(n-1)}}$ for every $n\geq1$ then we say that the {\it sequence of charts is adapted}. By the definition of adaptedness, $\sigma^{(0)}$ uniquely defines every permutation sequence $\sigma^{(n)}$ for every $n\geq1$.

A simple corollary of Lemma~\ref{lem:adaptedchart is unique} is that the chart $\mathcal{F}_{\sigma,\bl}$ of $H(P_0)$ is uniquely determined by the chipping sequence $P_{n}=\mathcal{C}_{\underline{p}_n}(P_{n-1})$ and the vector of permutations $\sigma=(\sigma_j)_{j\in I(P_0)}$, and hence, it defines uniquely the sequence of adapted charts. 

\subsection{Construction of the iterated function {scheme representation of chipping}}\label{subsec:ifs}

For a simple convex polyhedron $P$, chipping rates $\underline{p}=(p_{i'j'})_{(i',j')\in E(P)}$ and permutations $\sigma=(\sigma_{j})_{j\in I(\mathcal{C}_{\underline{p}}(P))}$, where $\sigma_{j}$ is a permutation of $N(j)$, let us define for every $j\in I(\mathcal{C}_{\underline{p}}(P))$ a $3\times3$ matrix $C_{j,\sigma,\underline{p}}$ such that
\begin{equation}\label{eq:Cmatrix}
	C_{j,\sigma,\underline{p}}=\begin{cases}\begin{bmatrix}
			1-p_{j}-p_{\sigma(1)} & -p_j & -p_j \\
			0 & p_{\sigma(2)} & 0 \\
			0 & 0 & p_{\sigma(3)} \\
		\end{bmatrix} & \text{if $\sigma(1)$ is the sibling of $j$,}\vspace{5pt}\\
		\begin{bmatrix}
			p_{\sigma(1)} & 0 & 0 \\
			-p_{j} & 1-p_{\sigma(2)}-p_j & -p_j \\
			0 & 0 & p_{\sigma(3)} \\
		\end{bmatrix} & \text{if $\sigma(2)$ is the sibling of $j$, and}\vspace{5pt}\\
		\begin{bmatrix}
			p_{\sigma(1)} & 0 & 0 \\
			0 & p_{\sigma(2)} & 0 \\
			-p_{j} & -p_j & 1-p_{\sigma(3)}-p_j \\
		\end{bmatrix} & \text{if $\sigma(3)$ is the sibling of $j$.}\end{cases}
\end{equation}


\begin{lemma}\label{lem:chart}
	Let $P$ be a simple convex polyhedron, and let $\underline{p}=(p_{ij})_{(i,j)\in E(P)}$ be a vector of chipping rates. Furthermore, let $\mathcal{F}_{\sigma,\bl}$ and $\mathcal{F}'_{\sigma',\bl'}$ be charts of $H(\mathcal{C}_{\underline{p}}(P))$ and $H(P)$ respectively such that $\mathcal{F}_{\sigma,\bl}$ is adapted to $\mathcal{F}'_{\sigma',\bl'}$.
	
	Then for every $j\in I(\mathcal{C}_{\underline{p}}(P))$ with mother $j'\in I(P)$ we have
	$$
	F_{j,\sigma,\bl}=F_{j',\sigma',\bl'}\circ G_{j',j,\sigma',\sigma,\underline{p}},
	$$
	where $G_{j',j,\sigma',\sigma,\underline{p}}(x)=\Lambda_{j',\sigma',\bl'}^{-1}\left(C_{j,\sigma,\underline{p}}\Lambda_{j,\sigma,\bl}x+p_je_{\sigma^{-1}(s(j))}\right)$.
\end{lemma}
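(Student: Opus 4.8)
The identity $F_{j,\sigma,\bl}=F_{j',\sigma',\bl'}\circ G_{j',j,\sigma',\sigma,\underline{p}}$ is an equality of affine maps $\R^3\to\R^3$, so it suffices to verify it on four affinely independent points, and the natural choice is $0,e_1,e_2,e_3$ (equivalently: check that the linear parts and the translation parts agree). Since $F_{j',\sigma',\bl'}(x)=\boldsymbol{A}_{j',\sigma'}\Lambda_{j',\sigma',\bl'}x+A_{j'}$ is invertible (the $\lambda$'s are positive and the edge vectors of a simple vertex are linearly independent), the claimed factorization is equivalent to
\[
G_{j',j,\sigma',\sigma,\underline{p}}(x)=\Lambda_{j',\sigma',\bl'}^{-1}\boldsymbol{A}_{j',\sigma'}^{-1}\bigl(F_{j,\sigma,\bl}(x)-A_{j'}\bigr),
\]
so the whole content is to compute the right-hand side explicitly and recognize it as $\Lambda_{j',\sigma',\bl'}^{-1}\bigl(C_{j,\sigma,\underline{p}}\Lambda_{j,\sigma,\bl}x+p_je_{\sigma^{-1}(s(j))}\bigr)$. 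Multiplying through by $\Lambda_{j',\sigma',\bl'}$, it is enough to show
\[
\boldsymbol{A}_{j',\sigma'}^{-1}\bigl(F_{j,\sigma,\bl}(x)-A_{j'}\bigr)=C_{j,\sigma,\underline{p}}\Lambda_{j,\sigma,\bl}x+p_je_{\sigma^{-1}(s(j))}.
\]

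First I would set up coordinates adapted to the mother vertex $j'$: write $N(j')=\{i',k',l'\}$ with $i'$ the father of $j$, so $A_j=A_{j'i'}=A_{j'}+p_{j'i'}\overrightarrow{A_{j'}A_{i'}}$, and the three neighbours of $j$ are the sibling $i'j'$ (sitting on the edge back toward $A_{i'}$) and the two cousins $j'k',j'l'$ (sitting on the edges $\overrightarrow{A_{j'}A_{k'}}$ and $\overrightarrow{A_{j'}A_{l'}}$). Using Definition~\ref{def:chipping} one gets the edge vectors emanating from $A_j$ inside the coordinate frame $(\overrightarrow{A_{j'}A_{i'}},\overrightarrow{A_{j'}A_{k'}},\overrightarrow{A_{j'}A_{l'}})$: the vector to the sibling is a multiple of $-\overrightarrow{A_{j'}A_{i'}}$ with coefficient $1-p_{j'i'}-p_{i'j'}$ shifted by the retreat $-p_j$ along the new face, while the vectors to the cousins $\overrightarrow{A_jA_{j'k'}}$, $\overrightarrow{A_jA_{j'l'}}$ decompose into a $-p_j$ component along $\overrightarrow{A_{j'}A_{i'}}$ and a $p_{j'k'}$ (resp.\ $p_{j'l'}$) component along $\overrightarrow{A_{j'}A_{k'}}$ (resp.\ $\overrightarrow{A_{j'}A_{l'}}$). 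Applying $\boldsymbol{A}_{j',\sigma'}^{-1}$ just reads off these coordinates, and the adaptedness condition $\sigma_j((\sigma'_{j'})^{-1})(i')=i'j'$ (and similarly for $k',l'$) guarantees that the columns land in exactly the positions dictated by the three cases of \eqref{eq:Cmatrix}, with $p_j e_{\sigma^{-1}(s(j))}$ accounting for the translation $A_j-A_{j'}$ after the change of basis. The three cases of $C_{j,\sigma,\underline{p}}$ correspond precisely to which of $\sigma(1),\sigma(2),\sigma(3)$ equals the sibling $s(j)$, i.e.\ which coordinate axis of the $j$-chart points back toward the father.

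Concretely I would (i) evaluate $F_{j,\sigma,\bl}(0)=A_j$ and check $\boldsymbol{A}_{j',\sigma'}^{-1}(A_j-A_{j'})=p_{j'i'}\cdot(\text{the father coordinate})=p_j e_{\sigma^{-1}(s(j))}$, using that $p_{j'i'}=p_j$ in the notation where $p_j:=p_{j'i'}$; then (ii) evaluate $F_{j,\sigma,\bl}(e_\ell)=A_j+\lambda_{j\sigma_j(\ell)}\overrightarrow{A_jA_{\sigma_j(\ell)}}$ for $\ell=1,2,3$, subtract $A_j$, express $\overrightarrow{A_jA_{\sigma_j(\ell)}}$ in the $j'$-frame as above, apply $\boldsymbol{A}_{j',\sigma'}^{-1}$, and subtract the translation from (i); the result should be the $\ell$-th column of $C_{j,\sigma,\underline{p}}\Lambda_{j,\sigma,\bl}$, i.e.\ $\lambda_{j\sigma_j(\ell)}$ times the $\ell$-th column of $C_{j,\sigma,\underline{p}}$. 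The main obstacle is purely bookkeeping: keeping the permutations $\sigma'$ and $\sigma$ and the sibling/cousin roles consistent, so that the $\pm p_j$ entries and the $p_{\sigma(m)}$ entries land in the right slots and the three cases of \eqref{eq:Cmatrix} come out correctly — once the adaptedness dictionary between $\sigma'_{j'}$ and $\sigma_j$ is pinned down, each case is a short linear-algebra verification with no real surprises. I would present one case (say $\sigma(1)=s(j)$) in full and note that the other two follow by the symmetric relabelling built into the definition of $C_{j,\sigma,\underline{p}}$.
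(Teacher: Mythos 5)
Your plan follows essentially the same route as the paper's proof: express the edge vectors emanating from $A_j$ in the mother frame $\boldsymbol{A}_{j',\sigma'}$ via Definition~\ref{def:chipping}, use adaptedness of the permutations to recognize the resulting coefficient matrix as $C_{j,\sigma,\underline{p}}$ (the paper records this as $\boldsymbol{A}_{j,\sigma}=\boldsymbol{A}_{j',\sigma'}C_{j,\sigma,\underline{p}}$), and absorb the translation $A_j-A_{j'}=p_{j'i'}\overrightarrow{A_{j'}A_{i'}}$ into the term $p_j\Lambda_{j',\sigma',\bl'}^{-1}e_{\sigma^{-1}(s(j))}$, so your check-on-$0,e_1,e_2,e_3$ framing is only a cosmetic repackaging of the same computation. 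One small slip to fix when writing it out: the sibling edge decomposes as $\overrightarrow{A_jA_{i'j'}}=(1-p_{j'i'}-p_{i'j'})\overrightarrow{A_{j'}A_{i'}}$, a positive multiple of $\overrightarrow{A_{j'}A_{i'}}$ with no additional $-p_j$ component, exactly matching the sibling column of $C_{j,\sigma,\underline{p}}$, rather than the ``multiple of $-\overrightarrow{A_{j'}A_{i'}}$ shifted by $-p_j$'' you describe.
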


\begin{proof}
	Let $j\in I(\mathcal{C}_{\underline{p}}(P))$ be arbitrary. Let $j'\in I(P)$ the mother, and $i'\in N(j')$ be the father of $j$, i.e. $j=j'i'$. Let us denote the other neighbours of $j'$ by $N(j')\setminus\{i'\}=\{k',l'\}$. Then the neighbours of $j=j'i'$ are $i:=i'j'$, $k:=j'k'$ and $l:=j'l'$. In particular, $i'j'=s(j'i')=s(j)$. Hence,
	\begin{eqnarray*}
	A_j&=&A_{j'i'}=A_{j'}+p_{j'i'}\overrightarrow{A_{j'}A_{i'}},\\
	A_i&=&A_{i'j'}=A_{i'}+p_{i'j'}\overrightarrow{A_{i'}A_{j'}},\\
	A_k&=&A_{j'k'}=A_{j'}+p_{j'k'}\overrightarrow{A_{j'}A_{k'}}\text{ and}\\
	A_l&=&A_{j'l'}=A_{j'}+p_{j'l'}\overrightarrow{A_{j'}A_{l'}}.
	\end{eqnarray*}
This implies that
\begin{equation}\label{eq:h1}
	\begin{split}
\overrightarrow{A_jA_i}&=(1-p_{i'j'}-p_{j'i'})\overrightarrow{A_{j'}A_{i'}},\\
\overrightarrow{A_jA_k}&=p_{j'k'}\overrightarrow{A_{j'}A_{k'}}-p_{j'i'}\overrightarrow{A_{j'}A_{i'}},\\
\overrightarrow{A_jA_l}&=p_{j'l'}\overrightarrow{A_{j'}A_{l'}}-p_{j'i'}\overrightarrow{A_{j'}A_{i'}}.
\end{split}
\end{equation}
Thus, since the charts are adapted, we have $\sigma^{-1}_{j}(i)=\sigma^{-1}_{j'}(i')$, $\sigma^{-1}_{j}(k)=\sigma^{-1}_{j'}(k')$ and $\sigma^{-1}_{j}(l)=\sigma^{-1}_{j'}(l')$. So by \eqref{eq:h1} and the definition of matrix $C_{j,\sigma,\underline{p}}$ \eqref{eq:Cmatrix}, we get
\begin{equation}\label{eq:adapted1}
	\boldsymbol{A}_{j,\sigma}=\boldsymbol{A}_{j',\sigma'}C_{j,\sigma,\underline{p}}.
\end{equation}
Since $i=i'j'$ is the sibling of $j=j'i'$ we get by \eqref{eq:adapted1} that
\[
\begin{split}
		F_{j,\sigma,\bl}(x)&=\boldsymbol{A}_{j,\sigma}\Lambda_{j,\sigma,\bl}x+A_j\\
&=\boldsymbol{A}_{j',\sigma}C_{j,\sigma,\underline{p}}\Lambda_{j,\sigma,\bl}x+A_{j'}+p_{j'i'}\overrightarrow{A_{j'}A_{i'}}\\
&=\boldsymbol{A}_{j',\sigma}\left(C_{j,\sigma,\underline{p}}\Lambda_{j,\sigma,\bl}x+p_{j'i'}\underline{e}_{\sigma_{j'}^{-1}(i')}\right)+A_{j'}\\
&=F_{j',\sigma',\bl'}\left(\Lambda_{j',\sigma',\bl'}^{-1}C_{j,\sigma,\underline{p}}\Lambda_{j,\sigma,\bl}x+p_{j'i'}\Lambda_{j',\sigma',\bl'}^{-1}\underline{e}_{\sigma^{-1}_{j}(i)}\right),
\end{split}
\]
which had to be proven.
\end{proof}

Under the conditions of Lemma~\ref{lem:chart}, the adaptedness of the charts (i.e. $\sigma^{-1}_{j}(i)=\sigma^{-1}_{j'}(i')$, $\sigma^{-1}_{j}(k)=\sigma^{-1}_{j'}(k')$ and $\sigma^{-1}_{j}(l)=\sigma^{-1}_{j'}(l')$) implies that
\begin{equation}\label{eq:images}
\begin{split}
G_{j',j,\sigma',\sigma,\underline{p}}(\underline{e}_{\sigma^{-1}_j(s(j))})&=\underline{e}_{\sigma^{-1}_j(s(j))},\\
G_{j',l,\sigma',\sigma,\underline{p}}(\underline{e}_{\sigma_j^{-1}(k)})&=G_{j',k,\sigma',\sigma,\underline{p}}(\underline{e}_{\sigma_j^{-1}(l)})\text{ for }k\neq l\in N(j)\setminus\{s(j)\}.
\end{split}
\end{equation}
Hence, 
\begin{equation}\label{eq:cont}
	G_{j',j,\sigma',\sigma,\underline{p}}(W)\subset W,
	\end{equation} where $W$ is the tetrahedron defined by the vectors $\{0,e_1,e_2,e_3\}$.

\subsection{Proof of the existence of the limiting object}\label{sec:existence}

Let now $P_{n}=\mathcal{C}_{\underline{p}_n}(P_{n-1})$ be a chipping sequence with $P_0=P$ and with chipping rates $\underline{p}_n=(p_{ij})_{(i,j)\in E(P_n)}$. Let $\mathcal{F}^{(n)}_{\sigma^{(n)},\bl^{(n)}}$ be the uniquely determined sequence of adapted charts of $H(P_n)$. For every $n\geq1$ and $j_n\in I(P_n)$, there exists a unique sequence $j_k\in I(P_k)$ such that $j_k$ is the mother of $j_{k+1}$ for every $k=0,\ldots,n-1$. We call the sequence $\underline{j}=(j_k)_{k=0}^{n-1}$ the {\it mother sequence} of $j_n$. Let us denote the set of infinite mother sequences by $\Sigma$, that is,
$$
\Sigma=\{(j_0,j_1,\ldots):j_k\in I(P_k)\text{ and }j_{k-1}\text{ is the mother of }j_k\}.
$$
Furthermore, denote $\Sigma_n$, the set of mother sequences of length $n$ and denote $\Sigma_*$, the set of finite mother sequences. For a $\underline{j}\in\Sigma_*$, denote $|\underline{j}|$ the length of $\underline{j}$, that is, $|\underline{j}|=n$ for $\underline{j}=(j_0,\ldots,j_n)$.

By applying Lemma~\ref{lem:chart} inductively, we get
\begin{equation}\label{eq:Fiterate}
\begin{split}
F_{j_n,\sigma^{(n)},\bl^{(n)}}&=F_{j_{n-1},\sigma^{(n-1)},\bl^{(n-1)}}\circ G_{j_{n-1},j_{n},\sigma^{(n-1)},\sigma^{(n)},\underline{p}_n}\\
&=F_{j_n,\sigma^{(0)},\bl^{(0)}}\circ G_{j_0,j_1,\sigma^{(0)},\sigma^{(1)},\underline{p}_1}\circ\cdots\circ G_{j_{n-1},j_{n},\sigma^{(n-1)},\sigma^{(n)},\underline{p}_n}.
\end{split}
\end{equation}
For a mother sequence $\underline{j}=(j_0,\ldots,j_n)$ and a sequence chipping rates $\boldsymbol{p}=(\underline{p}_1,\ldots,\underline{p}_n)$ let
\begin{equation}\label{eq:mapG}
G_{\underline{j},\sigma^{(0)},\bp}:=G_{j_0,j_1,\sigma^{(0)},\sigma^{(1)},\underline{p}_1}\circ\cdots\circ G_{j_{n-1},j_{n},\sigma^{(n-1)},\sigma^{(n)},\underline{p}_n}.
\end{equation}
Furthermore, let 
\begin{equation}\label{eq:Cmatrix2}
C_{\underline{j},\sigma^{(0)},\bp}:=C_{j_1,\sigma^{(1)},\underline{p}_1}\cdots C_{j_n,\sigma^{(n)},\underline{p}_n}.
\end{equation}
Note that $\sigma^{(0)}$ determines uniquely the further permutations, so the product and composition above depend only on it. Moreover, for an integer $1\leq k\leq n-1$ let $\underline{j}|_k=(j_1,\ldots,j_k)$ and $\bp|_k=(\underline{p}_1,\ldots,\up_k)$. For $k=0$, we use the conventions $\underline{j}|_0=\emptyset$, $\bp|_0=\emptyset$, $C_{\emptyset,\emptyset}=\mathrm{Id}$.

It is easy to see by the definition of the map $G_{\underline{j},\sigma^{(0)},\bp}$ that
\begin{multline*}
G_{\underline{j},\sigma^{(0)},\bp}(x)=\\
\Lambda_{j_0,\sigma^{(0)},\bl^{(0)}}^{-1}\left(C_{\underline{j},\sigma^{(0)},\bp}\ \Lambda_{j_n,\sigma^{(n)},\bl^{(n)}}x+\sum_{k=1}^np_{j_k}C_{\underline{j}|_{k-1},\sigma^{(0)},\bp|_{k-1}}\underline{e}_{\sigma_{j_k}^{-1}(s(j_k))}\right).
\end{multline*}

Let us denote the set of indexes of vertices through the chipping by $I^*:=\bigcup_{n=0}^\infty I(P_n)$. 

Let us denote the singular values of a real $3\times3$ matrix $A$ by $\alpha_1\geq\alpha_2\geq\alpha_3$. Clearly, for a $3\times3$ matrix $A$, $\|A\|=\alpha_1(A)$, where $\|.\|$ is the induced norm by the usual Euclidean norm, and $\alpha_3(A)=\|A^{-1}\|^{-1}$. Furthermore, let $\|.\|_1$ be the $1$-norm of $\R^3$, that is, for $\underline{v}^T=(v_1,v_2,v_3)\in\R^3$ $\|\underline{v}\|_1=|v_1|+|v_2|+|v_3|$. With a slight abuse of notation, let us denote the $1$-norm of $3\times3$ matrices by $\|.\|_1$ too, that is, for a $3\times3$ matrix $A=(a_{ij})_{i,j=1}^3$, we have $\|A\|_1=\max_{1\leq j\leq3}\{|a_{1j}|+|a_{2j}|+|a_{3j}|\}$.

\begin{lemma}\label{lem:contr}
	If the chipping rates $\bp$ are regular, then there exists $C>0$ such that for every $n\geq1$ and every mother sequence $\underline{j}=(j_0,\ldots,j_{n-1})$ of length $n$
	$$
	\alpha_1(C_{\underline{j},\sigma^{(0)},\bp})\leq C(1-\delta)^n.
	$$
\end{lemma}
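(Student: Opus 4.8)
The plan is to bound the induced matrix $1$-norm $\|\cdot\|_1$ of each elementary factor $C_{j_k,\sigma^{(k)},\underline p_k}$ appearing in the product \eqref{eq:Cmatrix2}, and then transfer the bound to the largest singular value $\alpha_1$ using submultiplicativity of $\|\cdot\|_1$ together with the equivalence of norms on $3\times 3$ matrices. The key claim is that regularity forces
$$
\|C_{j,\sigma,\underline p}\|_1<1-\delta
$$
for every factor that can occur in \eqref{eq:Cmatrix2}. Granting this, since $\|\cdot\|_1$ is an operator norm it is submultiplicative, so by \eqref{eq:Cmatrix2}
$$
\|C_{\underline j,\sigma^{(0)},\bp}\|_1\le\prod_{k=1}^{n}\|C_{j_k,\sigma^{(k)},\underline p_k}\|_1\le(1-\delta)^{n}.
$$
Since for any $3\times 3$ matrix $A$ and any $v\in\R^3$ one has $\|Av\|_2\le\|Av\|_1\le\|A\|_1\|v\|_1\le\sqrt 3\,\|A\|_1\|v\|_2$, it follows that $\alpha_1(A)=\|A\|\le\sqrt 3\,\|A\|_1$; applying this to $A=C_{\underline j,\sigma^{(0)},\bp}$ yields the lemma with $C=\sqrt 3$.

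It remains to prove the claim. Reading the columns of $C_{j,\sigma,\underline p}$ directly off \eqref{eq:Cmatrix} (equivalently, off the coefficient vectors in \eqref{eq:h1}), one finds that in each of the three cases exactly one column — the one indexed by the sibling direction $\sigma^{-1}(s(j))$ — equals $(1-p_j-p_{s(j)})$ times a standard basis vector, while each of the remaining two columns has absolute-value sum $p_j+p_c$, where $c$ runs over the two neighbours of $j$ different from $s(j)$ (its two cousins). For those two ``cousin'' columns regularity applies directly: $c\in N(j)$, so $p_j+p_c<1-\delta$. For the sibling column, the entry $1-p_j-p_{s(j)}$ is positive (since $p_j+p_{s(j)}<1$) and is controlled by the uniform lower bound on the chipping rates: $1-p_j-p_{s(j)}<1-p_j<1-\delta$. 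Taking the maximum of the three column sums gives $\|C_{j,\sigma,\underline p}\|_1<1-\delta$, as claimed.

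The argument is essentially routine, and I do not expect a genuine obstacle; the two points that require care are that the sibling column must be controlled not by a pairwise upper bound but by the uniform lower bound $p_j>\delta$ built into regularity, and that each factor $C_{j_k,\sigma^{(k)},\underline p_k}$ involves only the chipping rates and the adjacency relations at a single stage of the chipping sequence — precisely the stage at which regularity is imposed — so the bound $1-\delta$ is uniform in $k$ and the product estimate goes through.
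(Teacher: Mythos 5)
Your proposal is correct and follows essentially the same route as the paper: bound each factor's column-sum norm by $1-\delta$ (sibling column via $p_j>\delta$, the other columns via $p_j+p_k<1-\delta$ for $k\in N(j)$), use submultiplicativity of $\|\cdot\|_1$, and pass to $\alpha_1$ by norm equivalence. The only difference is cosmetic: you make the equivalence constant explicit ($C=\sqrt 3$) where the paper keeps a generic constant.
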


\begin{proof}

	Since every two norms over finite dimensional vector spaces are equivalent, there exists $C>0$ such that
	$$
	\alpha_1(C_{\underline{j},\sigma^{(0)},\bp})\leq C\|C_{\underline{j},\sigma^{(0)},\bp}\|_1\leq C\prod_{k=1}^n\|C_{j_k,\sigma^{(0)},\underline{p}_k}\|_1.
	$$
	
	On the other hand, for every matrix $C_{j,\sigma,\underline{p}}$ defined in \eqref{eq:Cmatrix}
	$$
	\|C_{j,\sigma,\underline{p}}\|_1=\max\left\{1-p_j-p_{s(j)},\max_{k\in N(j)\setminus\{s(j)\}}\{p_j+p_k\}\right\}\leq 1-\delta,
	$$
	which follows from the regularity of the rates.
\end{proof}

Now, we are ready to show the existence of the limit set of the sequence of net edges.

\begin{proof}[Proof of Proposition~\ref{prop:limitset}]
	Let $P$ be a simple convex polyhedron. Let $\bp=(\underline{p}_n)_{n=1}^\infty$ be a sequence of regular chipping rates and let $P_n=\mathcal{C}_{\underline{p}_n}(P_{n-1})$ with $P_0=P$ be a chipping sequence.
	
	Let us fix a permutation vector $\sigma^{(0)}=(\sigma_j)_{j\in I(P)}$ and let $\mathcal{F}^{(n)}$ be a sequence of adapted charts of $H(P_n)$. Note that $\bp$ and $\sigma$ uniquely determine the sequence of adapted charts, so by a slight abuse of notation, we omit the fixed permutation from the notations.
	
	Let $W$ be the tetrahedron defined by the vectors $\{\underline{0},\underline{e}_1,\underline{e}_2,\underline{e}_3\}$, and let $L=\bigcup_{i=1}^3[\underline{0},\underline{e}_i]$, where $[\underline{0},\underline{e}_i]=\{t\underline{e}_i:t\in[0,1]\}$. By definition, $H(P_n)=\bigcup_{F\in\mathcal{F}^{(n)}}F(L)\subset \bigcup_{F\in\mathcal{F}^{(n)}}F(W)=:X_n$.
	
	First, we show that $X_n$ converges to a limit set $X$ as $n\to\infty$. By Lemma~\ref{lem:chart} and \eqref{eq:Fiterate}, for every $F\in\mathcal{F}^{(n)}$, there exists a mother sequence $\underline{j}$ such that $F=F'\circ G_{\underline{j}|_n,\bp|_n}$, where $F'\in\mathcal{F}^{(0)}$. Thus, $X_n\subseteq X_{n-1}$ by \eqref{eq:cont}. Clearly, $X_n$ are compact sets, and so there exists a non-empty compact set
	\begin{equation}\label{eq:ifsconst}
		X:=\bigcap_{n=1}^\infty X_n=\bigcap_{n=1}^\infty\bigcup_{F\in\mathcal{F}^{(n)}}F(W).
	\end{equation}
	 Furthermore, by Lemma~\ref{lem:contr},
	\[
	\begin{split}
	&d_H(F'\circ G_{\underline{j}|_{n-1},\bp|_{n-1}}(W), F'\circ G_{\underline{j}|_n,\bp|_n}(W))\\
	&\qquad\leq\alpha_1(\boldsymbol{A}_{j_0}C_{\underline{j}|_{n-1},\bp|_{n-1}}\Lambda_{j_{n-1},\bl^{(n-1)}})d_H(W,G_{j_{n-1},j_n,\underline{p}_n}(W))\\
	&\qquad\leq\alpha_1(\boldsymbol{A}_{j_0})C(1-\delta)^{n-1}\mathrm{diam}(W),
\end{split}
\]
where recall that $\boldsymbol{A}_{j_0}=\begin{bmatrix}\overrightarrow{A_{j_0}A_{\sigma_{j_0}(1)}} & \overrightarrow{A_{j_0}A_{\sigma_{j_0}(2)}} & \overrightarrow{A_{j_0}A_{\sigma_{j_0}(3)}}\end{bmatrix}$, and so
$$
d_H(X_{n-1},X_n)\leq(1-\delta)^{n-1}C\mathrm{diam}(W)\max_{j\in I(P)}\alpha_1(\boldsymbol{A}_{j}).
$$
This implies that $X_n$ forms a Cauchy sequence, and so $X_n\to X$ in the Hausdorff metric. Finally,
\[\begin{split}
d_H(H(P_n),X_n)&\leq\max_{F\in\mathcal{F}^{(n)}}d_H(F(L),F(W))\leq\max_{F\in\mathcal{F}^{(n)}}\mathrm{diam}(F(W))\\
&\leq\max_{j\in I(P)}\alpha_1(\boldsymbol{A}_{j})C(1-\delta)^{n-1}\mathrm{diam}(T),
\end{split}\]
	which completes the proof.
\end{proof}

\subsection{The box-counting dimension}\label{sec:boxdim}

The main purpose of this paper is to study the fractal properties of the limit set $X$ defined in Proposition~\ref{prop:limitset}. {There is no widely
	accepted definition of fractals. However, most authors would call a set	fractal if at infinitely many scales its smaller parts resemble the whole.} As we have seen by the construction \eqref{eq:ifsconst} in Section~\ref{sec:existence}, $X$ is a finite union of such sets since we repeat the same kind of chipping again and again. In particular, if the chipping rates $p_{ji}\equiv p$ were taken as a constant value independent of $n$ and $i,j\in I(P_n)$, then the limiting object $X$ would be a finite union of self-affine sets. More precisely, $X=\bigcup_{F'\in\mathcal{F}^{(0)}}F'(Y)$, where $Y$ is the unique non-empty compact set such that 
\begin{equation}\label{eq:constantchipping}
Y=\left(C_1Y+2p\underline{e}_1\right)\cup \left(C_2Y+2p\underline{e}_2\right)\cup \left(C_3Y+2p\underline{e}_3\right),
\end{equation} where 
$C_i$ is the matrix defined in \eqref{eq:Cmatrix} such that $j$ has sibling $\sigma(i)$.

Let us now define the box-counting dimension. Let $A$ be a bounded subset of $\R^3$. Let $N_{\delta}(A)$ be the minimal number of balls that cover $A$. We define the {\it upper box-counting dimension} as
$$
\overline{\dim}_B(A)=\limsup_{\delta\to0}\frac{\log N_\delta(A)}{-\log\delta}.
$$

Two useful properties of the upper box-counting dimension are finite stability and monotonicity under Lipschitz mappings. That is, for every finite index set $J$
$$
\overline{\dim}_B\left(\bigcup_{j\in J}A_j\right)=\max\{\overline{\dim}_B(A_j):j\in J\},
$$
and for every map $f\colon\R^3\mapsto\R^3$ such that $\|f(\underline{x})-f(\underline{y})\|\leq L\|\underline{x}-\underline{y}\|$ for every $\underline{x},\underline{y}\in\R^3$ with some uniform constant $0<L$
$$
\overline{\dim}_B\left(f(A)\right)\leq\overline{\dim}_B(A).
$$
For further properties of the box-counting dimension, see \cite[Section~3.1]{Falb2}.

The difficulty which arises in the calculation of the dimension of self-affine-like objects is that they are defined by strict affine mappings. In other words, the most natural cover of $X$ is the collection $\{F(W)\}_{F\in\mathcal{F}^{(n)}}$ by the construction \eqref{eq:ifsconst}, but the sets $F(W)$ are relatively long and thin shapes which do not fit the required cover by balls. To handle this difficulty, let us define the {\it singular value function} introduced by Falconer \cite{Fal1}. For a $3\times3$ matrix $A$, let 
$$
\varphi^s(A)=\begin{cases}
	\alpha_1(A)^s & \text{if }0\leq s\leq 1\\
	\alpha_1(A)\alpha_2(A)^{s-1} & \text{if }1< s\leq2\\
	\alpha_1(A)\alpha_2(A)\alpha_3(A)^{s-2} & \text{if }2< s\leq3\\
	\big(\alpha_1(A)\alpha_2(A)\alpha_3(A)\big)^{s/3} & \text{if }s>3,
\end{cases}
$$
where $\alpha_i(A)$ denotes the $i$th singular value of $A$. The function $s\mapsto\varphi^s(A)$ is monotone decreasing, and the function $A\mapsto\varphi^s(A)$ is sub-multiplicative, i.e. for every $3\times3$ matrices $A,B$, $\varphi^s(AB)\leq\varphi^s(A)\varphi^s(B)$ for every $s\geq0$, see \cite{Fal1}.

\begin{theorem}\label{thm:main}
	Let $P$ be a convex polyhedron, and let $P_{n}=\mathcal{C}_{\underline{p}_n}(P_{n-1})$ be a chipping sequence with $P_0=P$ and chipping rates $\underline{p}_n=(p_{i})_{i\in I(P_{n})}$. {Let $X$ be the limiting object of the sequence $H(P_n)$ defined in Proposition~\ref{prop:limitset}.} Let $\sigma^{(0)}$ be an arbitrary but fixed neighbourhood permutation of $P$. Furthermore, let $\{C_{\underline{j},\bp}\}$ be the matrices defined in \eqref{eq:Cmatrix} and \eqref{eq:Cmatrix2}.
	
	If there exists $\delta>0$ such that for every $j\in I^*=\bigcup_{n=0}^\infty I(P_n)$ and $i\in N(j)$, $p_j>\delta$ and $p_{j}+p_{i}<1-\delta$ then	
	$$
	\overline{\dim}_B(X)=s_0:=\inf\{s>0:\sum_{n=1}^\infty\sum_{\underline{j}\in\Sigma_n}\varphi^s(C_{\underline{j},\bp|_n})<\infty\}.
	$$
	Furthermore, $\overline{\dim}_B(X)\in[1,2]$.
\end{theorem}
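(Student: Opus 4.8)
The plan is to prove the two bounds $\overline{\dim}_B(X)\le s_0$ and $\overline{\dim}_B(X)\ge s_0$ separately, and then to settle the range $[1,2]$ by geometric considerations. For the upper bound I would start from the natural cover $X\subset X_n=\bigcup_{\underline j\in\Sigma_n}F_{j_n,\sigma^{(n)},\bl^{(n)}}(W)$ of \eqref{eq:ifsconst} together with the factorization \eqref{eq:Fiterate}, which exhibits each piece $F_{j_n}(W)=F_{j_0}\circ G_{\underline j,\bp|_n}(W)$ as an affine image of $W$ whose linear part is $\boldsymbol A_{j_0}\,C_{\underline j,\bp|_n}\,\Lambda_{j_n,\bl^{(n)}}$. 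Since $\boldsymbol A_{j_0}$ runs over a fixed finite set of invertible matrices and $\Lambda_{j_n}$ is diagonal with entries in $[\delta_0,1-\delta_0]$ (by Lemma~\ref{lem:adaptedchart is unique} and regularity, $\delta_0$ being the regularity constant), the singular values of this linear part are comparable, up to constants depending only on $P$, to those of $C_{\underline j,\bp|_n}$. Fix $s>s_0$, put $m=\lceil s\rceil$, and for $\delta>0$ use the stopping antichain $\Gamma_m(\delta)=\{\underline j:\alpha_m(C_{\underline j,\bp})<\delta\le\alpha_m(C_{\underline j^-,\bp})\}$, where $\underline j^-$ is the mother sequence of $\underline j$; it is finite since $\alpha_1(C_{\underline j,\bp})\le C(1-\delta_0)^{|\underline j|}$ by Lemma~\ref{lem:contr}, and $X\subset\bigcup_{\underline j\in\Gamma_m(\delta)}F_{j_n}(W)$ because the maps $G$ compose and $G_{\underline j}(W)\subset W$ by \eqref{eq:cont}. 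Regularity also gives $|\det C_{j,\sigma,\bp}|\ge\delta_0^3$ and $\|C_{j,\sigma,\bp}\|\le\sqrt3$, hence $\alpha_3(C_{j,\sigma,\bp})\ge c_0:=\delta_0^3/3$; consequently the singular values of $C_{\underline j,\bp}$ change only by a bounded factor when one symbol is appended, so $\alpha_m(C_{\underline j,\bp})\asymp\delta$ and all smaller singular values are $<\delta$ on $\Gamma_m(\delta)$. Using $N_\delta(A(W))\asymp\prod_i\max\{1,\alpha_i(A)/\delta\}$, an elementary computation then yields $N_\delta(F_{j_n}(W))\lesssim\varphi^s(C_{\underline j,\bp})\,\delta^{-s}$ for every $\underline j\in\Gamma_m(\delta)$, whence, summing over the antichain (which lies inside $\bigcup_n\Sigma_n$),
$$
N_\delta(X)\ \lesssim\ \delta^{-s}\sum_{\underline j\in\Gamma_m(\delta)}\varphi^s(C_{\underline j,\bp})\ \le\ \delta^{-s}\sum_{n\ge1}\sum_{\underline j\in\Sigma_n}\varphi^s(C_{\underline j,\bp|_n})<\infty .
$$
Letting $s\downarrow s_0$ gives $\overline{\dim}_B(X)\le s_0$.

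The reverse inequality $\overline{\dim}_B(X)\ge s_0$ is where the real work lies, and the step I expect to be the main obstacle. The first ingredient is separation: by \eqref{eq:images} and \eqref{eq:cont} the three maps $G_{j',j,\sigma',\sigma,\underline p}$ attached to the children of a vertex $j'$ each fix a distinct vertex of $W$ and map $W$ into the corresponding corner, and one checks that their images have pairwise disjoint interiors — this is the geometric content of the strict disjointness of the chipped pyramids imposed in Definition~\ref{def:chipping}. Hence the non-autonomous system $\{G_{\underline j,\sigma^{(0)},\bp}\}$ satisfies the open set condition with open set $\inter(W)$, and regularity bounds the distortions of all $G_{\underline j}$ uniformly. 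The second, harder, ingredient is a non-degeneracy property of the matrix cocycle $C_{\underline j,\bp}$ — that the matrices in \eqref{eq:Cmatrix} admit no common proper invariant subspace — which should be readable off their explicit entries. Granting these, one carries out the non-autonomous, matrix-valued analogue of the classical self-affine box-counting lower bound: along a sequence $\delta_k\to0$ one selects from the stopping antichain $\Gamma_1(\delta_k)$ a point of $X$ inside each surviving (long, thin) piece, and the open set condition together with irreducibility of the $C_{\underline j,\bp}$ prevents these points and pieces from being absorbed into too few $\delta_k$-balls; comparing the resulting count with the divergence of $\sum_n\sum_{\underline j\in\Sigma_n}\varphi^{s}(C_{\underline j,\bp|_n})$ for $s<s_0$ gives $N_{\delta_k}(X)\gtrsim\delta_k^{-(s_0-\varepsilon)}$ for all $\varepsilon>0$, hence $\overline{\dim}_B(X)\ge s_0$.

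Finally, for $\overline{\dim}_B(X)\in[1,2]$: the nets $H(P_n)$ are connected, being $1$-skeletons of polytopes, and $H(P_n)\to X$ in the Hausdorff metric, so $X$ is connected; it has more than one point — for instance it contains the distinct fixed points of the maps $G_{j',j,\sigma',\sigma,\underline p}$ — and a connected set with at least two points has topological, hence Hausdorff, dimension at least $1$, so $\overline{\dim}_B(X)\ge\dim_H(X)\ge1$. For the upper estimate, $(P_n)$ is a decreasing sequence of convex bodies, so $P_n\to K:=\bigcap_nP_n$ and $\partial P_n\to\partial K$ in the Hausdorff metric; since $H(P_n)\subset\partial P_n$ and $H(P_n)\to X$, this forces $X\subset\partial K$. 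As the boundary of a convex body in $\R^3$ with surface area at most that of $P_0$, $\partial K$ has $\delta$-neighbourhood of volume $O(\delta)$ by the Steiner formula, hence $N_\delta(\partial K)=O(\delta^{-2})$, and therefore $\overline{\dim}_B(X)\le\overline{\dim}_B(\partial K)\le2$.
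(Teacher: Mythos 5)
Your upper bound is sound and is essentially the paper's argument (Lemma~\ref{lem:upperbound}): a stopping antichain determined by the $\lceil s\rceil$-th singular value, covering each affine image by boxes, and summing the singular value function; the comparison of $\boldsymbol{A}_{j_0}C_{\underline j}\Lambda_{j_n}$ with $C_{\underline j}$ is exactly what \eqref{eq:s0isB} records. The genuine gap is the lower bound, which you yourself flag as ``the main obstacle'' but then resolve by appeal to a ``non-autonomous, matrix-valued analogue of the classical self-affine box-counting lower bound'' under OSC plus irreducibility of the cocycle. No such general theorem exists to invoke: for affine systems, the open set condition together with irreducibility is not known (and in the non-autonomous setting not even meaningfully formulated) to force the box dimension up to the critical exponent of $\sum\varphi^s$; the known exact-dimension results (Falconer's a.e.\ statements, B\'ar\'any--Hochman--Rapaport, Morris--Sert) are for autonomous self-affine IFSs and are precisely what the paper reserves for the constant-rate Theorem~\ref{thm:main2}. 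Worse, your proposed hypothesis can actually fail for admissible data: with constant chipping rate $p=1/5$ (perfectly regular), the matrices $C_1,C_2,C_3$ of \eqref{eq:Cmatrix} share the invariant line spanned by $(1,1,1)$, so ``no common proper invariant subspace'' is not readable off the entries --- it is false --- and the paper must handle this case separately by projecting to $V=(1,1,1)^\perp$. So the lower bound as proposed has no proof behind it.

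What the paper does instead, and what your sketch is missing, is a hands-on argument exploiting the positivity structure of the matrices rather than irreducibility: the inverses $B_{\underline j}^{-1}$ have non-negative entries, which yields the cone estimates of Lemmas~\ref{lem:estim1}--\ref{lem:vectors}, the fact that the projections of distinct cylinders $G_{\underline j}(W)$ to the plane $V$ have disjoint interiors (Lemma~\ref{lem:projosc}), and the comparability of the projected cylinders' area and longest side with $\alpha_1\alpha_2$ and $\alpha_1$ respectively (Lemmas~\ref{lem:area} and~\ref{lem:length}). Combined with connectivity (each projected cylinder contains a curve joining the images of two of the $\underline e_i$, giving at least $\alpha_1(B_{\underline j})/\alpha_2(B_{\underline j})$ boxes of size $\approx\alpha_2$ per cylinder) and a pigeonhole over the levels $\mathcal{M}_n(i)$ (Lemma~\ref{lem:subseq}), divergence of $\sum\varphi^s$ for $s<s_0$ translates into the box-count lower bound for $\mathrm{proj}(Z_i)$, hence for $X$; this also delivers $s_0\le 2$ (Lemma~\ref{lem:ub}) and $s_0\ge1$ (Lemma~\ref{lem:divergent}), which is how the range $[1,2]$ is obtained. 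Your direct geometric arguments for $\overline{\dim}_B(X)\in[1,2]$ (connectedness of the Hausdorff limit, and $X\subset\partial K$ with $K=\bigcap_n P_n$) are fine as far as they go, but they do not repair the missing lower bound $\overline{\dim}_B(X)\ge s_0$.
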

	
The method of the proof uses the ideas of Falconer~\cite{Fal2}. However, there are several technical difficulties; for instance, $X$ is not a planar, connected set. 

{The dimension estimate $1\leq\overline{\dim}_B(X)\leq2$ seems virtually obvious, since the set $X$ is connected and its projection to the plane with normal vector $(1,1,1)$ satisfies a certain separation condition (see Lemma~\ref{lem:projosc} later). However, it is natural to expect that $1<\overline{\dim}_B(X)<2$, which would be an even stronger fractal-like property and which might be possible to show with some more sophisticated analysis relying submultiplicativity and the mentioned separation.}

\subsection{The Hausdorff dimension}

Let us also mention another important dimension concept, the Hausdorff dimension. For a subset $A$ of $\R^d$, we define the Hausdorff dimension of $A$ by
\begin{multline*}
	\dim_H(A)=\inf\Big\{s>0:\text{there exists $\{U_i\}_{i\in\mathcal{I}}$ such that }\\
	A\subseteq\bigcup_{i\in\mathcal{I}}U_i\text{ and }\sum_{i\in\mathcal{I}}\mathrm{diam}(U_i)^s<\infty\Big\}.
\end{multline*}
It is clear from the definition that $\dim_H(A)\leq\overline{\dim}_B(A)$. Moreover, the Hausdorff dimension is also monotone under Lipschitz maps, but it is countably stable. For further properties, see Falconer \cite{Falb2}. 

In the case of the highly general chipping model with regular rates, the calculation of the Hausdorff dimension of the limiting object seems unattainable even with the most current techniques. However, the constant chipping rate case \eqref{eq:constantchipping} can be handled thanks to the recent work of Rapaport \cite{R} and Morris and Sert \cite{MS}. Our second main theorem gives the Hausdorff dimension of the net of edges of the abraded polyhedron with constant chipping rate.

\begin{theorem}\label{thm:main2}
	Let $P$ be a convex polyhedron and $0<p<1/2$. Let $P_{n}=\mathcal{C}_{\underline{p}_n}(P_{n-1})$ be a chipping sequence with $P_0=P$ and chipping rates $\underline{p}_n=(p_{i})_{i\in I(P_{n})}$ such that $p_i=p$ for every $i\in \bigcup_{n=0}^\infty I(P_n)$, and let $X$ be the limiting object of the sequence $H(P_n)$ defined in Proposition~\ref{prop:limitset}. Then	
	$$
	\dim_H(X)=s_0=\inf\{s>0:\sum_{n=1}^\infty\sum_{j_1,\ldots,j_n=1}^3\varphi^s(C_{j_1}\cdots C_{j_n})<\infty\},
	$$
	where $C_1,C_2$ and $C_3$ are the matrices from the equation \eqref{eq:constantchipping}.
\end{theorem}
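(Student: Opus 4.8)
The plan is to upgrade the box-counting result of Theorem~\ref{thm:main} to a Hausdorff-dimension result in the constant-rate case by exhibiting the limiting set $X$ as a finite union of affine copies of a genuine self-affine attractor and then invoking the variational principle for self-affine sets. By Theorem~\ref{thm:main} we already know $\overline{\dim}_B(X)=s_0$, and since $\dim_H(A)\le\overline{\dim}_B(A)$ for every bounded set, only the lower bound $\dim_H(X)\ge s_0$ requires work. Because $\dim_H$ is countably (hence finitely) stable and monotone under bi-Lipschitz maps, and $X=\bigcup_{F'\in\mathcal{F}^{(0)}}F'(Y)$ with each $F'$ an invertible affine map, it suffices to prove $\dim_H(Y)\ge s_0$, where $Y$ is the self-affine attractor of the IFS $\{x\mapsto C_ix+2p\underline{e}_i\}_{i=1}^3$ from \eqref{eq:constantchipping}. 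Note that the quantity $s_0$ defined here is exactly the affinity dimension (Falconer dimension) of this three-map linear system: $s_0=\inf\{s:\sum_n\sum_{j_1,\dots,j_n}\varphi^s(C_{j_1}\cdots C_{j_n})<\infty\}$ is the critical exponent of the singular-value pressure.

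The first substantive step is to record the structural properties of the matrices $C_1,C_2,C_3$ from \eqref{eq:Cmatrix}. Each $C_i$ is (up to the choice of which coordinate plays the role of the sibling direction) upper- or lower-triangular with one diagonal entry $1-2p\in(0,1)$, two diagonal entries equal to $p\in(0,1/2)$, and two off-diagonal entries equal to $-p$; in particular every $C_i$ is invertible and a strict contraction with $\|C_i\|<1$. Crucially they are \emph{not} all simultaneously triangularizable and their entries are generically irrational, so the matrix semigroup they generate is, in the terminology of \cite{R,MS}, strongly irreducible and proximal on $\R^3$ (one should check this, or note that the relevant theorems of Rapaport \cite{R} and Morris--Sert \cite{MS} are stated for general tuples of contractions and only require that the attractor does not lie in a proper affine subspace together with an exponential-separation / Diophantine-type hypothesis, which holds here because the translations $2p\underline{e}_i$ put the three pieces in visibly different positions and because $Y$ is not contained in any hyperplane — indeed $Y$ contains the three fixed points of the $C_i$ which are affinely independent). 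Then I would invoke the main theorem of \cite{MS} (or \cite{R}): for a self-affine system on $\R^d$ satisfying these irreducibility and separation conditions, the Hausdorff dimension of the attractor equals the affinity dimension. Applied to $\{C_i x+2p\underline{e}_i\}$ this gives $\dim_H(Y)=s_0$, and pushing through the finite union and the affine maps $F'$ yields $\dim_H(X)=s_0$.

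The main obstacle is verifying that the concrete system $\{C_ix+2p\underline{e}_i\}_{i=1}^3$ satisfies the precise hypotheses under which \cite{R} or \cite{MS} establish ``Hausdorff dimension $=$ affinity dimension.'' There are two issues to handle carefully. First, irreducibility: one must confirm there is no proper nonzero linear subspace of $\R^3$ invariant under all three $C_i$ (equivalently, rule out a common eigendirection or invariant plane). For $p\neq 1/3$ the triangular forms in \eqref{eq:Cmatrix} point their $(1-2p)$-eigendirections along different coordinate axes, so a short linear-algebra computation shows no common invariant line or plane exists; this should be done explicitly, perhaps excluding a measure-zero set of exceptional $p$ or, better, showing it holds for all $p\in(0,1/2)$. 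Second, the separation/Diophantine condition: the cleanest route is to verify that the IFS satisfies the strong open set condition (the three sets $C_iW+2p\underline{e}_i$ have disjoint interiors inside $W$ — which follows from \eqref{eq:cont} and the combinatorics of chipping, since the three sub-tetrahedra meet only along the faces of $W$), and then appeal to the version of \cite{MS} that only requires the OSC, or else note that exponential separation is automatic under the OSC for affine IFS in the sense needed. Once the dictionary between the chipping matrices and the hypotheses of \cite{R,MS} is pinned down, the rest is bookkeeping: transporting the equality along the finitely many affine charts $F'$ and invoking finite stability of $\dim_H$ to conclude $\dim_H(X)=s_0$.
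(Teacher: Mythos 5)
Your overall strategy coincides with the paper's: reduce to the self-affine attractor $Y$ of $\{x\mapsto C_ix+2p\underline{e}_i\}_{i=1}^3$, verify the hypotheses of the Rapaport/Morris--Sert theorem (invertibility, proximality, strong irreducibility, and an open set condition realised by the tetrahedron $W$ via \eqref{eq:cont} and Lemma~\ref{lem:projosc}), and transport the conclusion through the finitely many affine charts $F'$. However, there is a genuine gap at the point you flag but do not resolve: strong irreducibility does \emph{not} hold for all $p\in(0,1/2)$. At $p=1/5$ the vector $(1,1,1)$ is a common eigenvector of $C_1,C_2,C_3$ (with eigenvalue $1/5$), so the linear parts preserve a common line and the $\R^3$ version of Theorem~\ref{thm:BHRMS} is simply not applicable there. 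Your two suggested escapes both fail: excluding a measure-zero set of exceptional $p$ does not prove the theorem as stated (which covers every $0<p<1/2$), and the alternative hypothesis you invoke --- that the cited results only need the attractor to span $\R^3$ affinely plus an exponential-separation condition --- is not what \cite{R,MS} prove in dimension three; they genuinely require proximality and strong irreducibility of the matrix tuple. The paper handles $p=1/5$ by a separate argument: it uses the semi-conjugacy $\proj\circ G_i=g_i\circ\proj$ onto the plane $V\perp(1,1,1)$, proves strong irreducibility of the induced planar maps $D_i=\proj C_i|_V$ (Lemma~\ref{lem:irred2}), applies the planar B\'ar\'any--Hochman--Rapaport theorem to get $\dim_H(\proj(Y))$ equal to the planar affinity dimension, and then uses the singular-value comparisons \eqref{eq:bounds} and \eqref{eq:restrictedsingvalue} (together with $s_0\le2$) to identify that planar affinity dimension with $s_0$. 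Without some such device, your proof does not establish the case $p=1/5$.

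Two smaller points also need attention. First, proximality is not automatic: for $p\ge 1/3$ the dominant eigenvalue of each $C_i$ is $p$ with multiplicity two, so one must exhibit a product with a simple top eigenvalue (the paper uses $C_1C_2$); your "generic irrationality" remark does not substitute for this. Second, strong irreducibility requires ruling out invariant \emph{finite unions} of proper subspaces, not merely a common invariant line or plane; the paper's Lemma~\ref{lem:irred3} needs the permutation/Lagrange argument precisely to reduce the finite-union case to common eigenspaces, and it is exactly this computation that detects the exceptional value $p=1/5$ your sketch misses.
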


Although the matrices are fixed throughout the construction, approximating the dimension value is extremely difficult, see Morris \cite{Morris22}, and out of the scope of this paper.

\section{The upper box dimension of the edge net of the abraded polyhedron}\label{sec:proof}

We assume, without loss of generality, that $P_0$ is a simple convex polyhedron. Throughout the section, we fix a $P_{n}=\mathcal{C}_{\underline{p}_n}(P_{n-1})$ chipping sequence with $P_0=P$ and chipping rates $\bp=(\underline{p}_n)_{n=1}^\infty$, where $\underline{p}_n=(p_{i})_{i\in I(P_{n})}=(p_{ij})_{(i,j)\in E(P_{n-1})}$, furthermore, we fix a neighbourhood permutation $\sigma=(\sigma_j)_{j\in I(P)}$ of $P_0$. We will also assume throughout the section that chipping rates are regular. That is, there exists a $\delta>0$ such that for every $i\in I^*=\bigcup_{n=0}^\infty I(P_n)$, $\delta<p_{i}$ and $p_{i}+p_{j}<1-\delta$ for every $j\in N(i)$. 

To simplify the notations, we denote the adapted sequence of charts by $\mathcal{F}^{(n)}=\{F_{j}\}_{j\in I(P_n)}$ and the matrix defined in \eqref{eq:Lambda} by $\Lambda_j$ for $j\in I^*$. Let us denote the matrices defined in \eqref{eq:Cmatrix} by $C_{j}$ for $j\in I^*$, and the matrices defined in \eqref{eq:Cmatrix2} by $C_{\underline{j}}$ for mother sequences $\underline{j}\in\Sigma_*$. Similarly, for a $j\in I^*$ with mother $j'$, let $G_{j',j}$ be the map defined in Lemma~\ref{lem:chart}, and for a mother sequence $\underline{j}\in\Sigma_*$, let $G_{\underline{j}}$ be defined in \eqref{eq:mapG}.

For a mother sequence $\underline{j}=(j_0,j_1,\ldots,j_n)\in\Sigma_n$, let $$B_{\underline{j}}:=\Lambda_{j_0}^{-1}C_{\underline{j}}\Lambda_{j_n}.$$ Simple algebraic calculations show that for every $j\in I^*$, the matrix $C_j^{-1}$ contains non-negative elements, hence, $(B_{\underline{j}})^{-1}$ has non-negative elements for every mother sequence $\underline{j}\in\Sigma_*$.

Let $s_0$ be as in Theorem~\ref{thm:main}. By Lemma~\ref{lem:adaptedchart is unique} and the regularity of the chipping rates $\bp$, we have that for every $j\in I^*$, $\delta<\lambda_j<1$, and so 
\begin{equation}\label{eq:s0isB}
s_0=\inf\{s>0:\sum_{\underline{j}\in\Sigma_*}\varphi^s(B_{\underline{j}})<\infty\}.
\end{equation}

\subsection{Singular values and separation}\label{sec:singvalsep}

Before we prove the lower bound in Theorem~\ref{thm:main}, we need further analysis of the matrices $B_{\underline{j}}$. For an $\varepsilon\geq0$, let us denote the triangle formed by the vertices $(1-2\varepsilon,\varepsilon,\varepsilon)$, $(\varepsilon,1-2\varepsilon,\varepsilon)$ and $(\varepsilon,\varepsilon,1-2\varepsilon)$ by $T_\varepsilon$. Let us denote the triangle formed by the vertices $\underline{e}_1,\underline{e}_2,\underline{e}_3$ by $T_0$. Let us denote the orthogonal projection to a proper subspace $Y$ of $\R^3$ by $\mathrm{proj}_Y$. Furthermore, let us denote the subspace perpendicular to $(1,1,1)$ by $V$ and, for simplicity, by $\mathrm{proj}$ the orthogonal projection $\mathrm{proj}\colon\R^3\mapsto V$. Also, let us denote the standard scalar product on $\R^3$ by $\langle\cdot,\cdot\rangle$, and the angle between two vectors by $\sphericalangle(\cdot,\cdot)$.
	
	\begin{lemma}\label{lem:estim1}
	For every $\varepsilon>0$, there exists a uniform constant $C>0$ such that for every $\underline{v}\in T_\varepsilon$, and every mother sequence $\underline{j}\in\Sigma_*$
	$$
	\|(B_{\underline{j}})^{-1}\underline{v}\|\geq C\|(B_{\underline{j}})^{-1}\|.
	$$
\end{lemma}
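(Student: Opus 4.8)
The plan is to exploit the sign structure of the inverse matrices. Recall from the paragraph following the definition of $B_{\underline j}$ that $(B_{\underline j})^{-1}$ has only non-negative entries, since $(B_{\underline j})^{-1}=\Lambda_{j_n}^{-1}C_{j_n}^{-1}\cdots C_{j_1}^{-1}\Lambda_{j_0}$ is a product of entrywise non-negative matrices: the $\Lambda_j$ are positive diagonal, and one checks directly that each $C_j^{-1}$ is non-negative, because up to a permutation of coordinates the $C_j$ of \eqref{eq:Cmatrix} are triangular with positive diagonal and the only possibly-nonzero off-diagonal entries of the inverse are $+p_j$ divided by positive quantities. The geometric idea behind the lemma is that for a non-negative matrix $M$ the operator norm $\|M\|$ is, up to a dimensional constant, already realised on the all-ones vector $\mathbf{1}=(1,1,1)^T$, whereas every vector of $T_\varepsilon$ dominates $\varepsilon\mathbf{1}$ coordinatewise; hence a non-negative matrix cannot ``kill'' a vector of $T_\varepsilon$.

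To carry this out, I would first record that a point $\underline v\in T_\varepsilon$, being a convex combination of the three vertices $(1-2\varepsilon,\varepsilon,\varepsilon)$, $(\varepsilon,1-2\varepsilon,\varepsilon)$, $(\varepsilon,\varepsilon,1-2\varepsilon)$, has each coordinate equal to $\varepsilon$ plus a non-negative multiple of $1-3\varepsilon$; hence for $\varepsilon\le 1/3$ (the only range in which $T_\varepsilon$ is a genuine triangle, and the only one in which the lemma is applied) we have $\underline v\ge\varepsilon\mathbf{1}$ coordinatewise. Then, writing $M:=(B_{\underline j})^{-1}\ge 0$ and using $M\ge 0$ together with $\underline v-\varepsilon\mathbf{1}\ge 0$, I get $M\underline v\ge\varepsilon M\mathbf{1}\ge 0$ coordinatewise, so $\|M\underline v\|\ge\varepsilon\|M\mathbf{1}\|$ by monotonicity of the Euclidean norm on the non-negative orthant. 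Since $M\mathbf{1}$ is the vector of row sums of $M$, one has $\|M\mathbf{1}\|\ge\|M\mathbf{1}\|_\infty=\max_i\sum_j M_{ij}=\|M\|_\infty$, and equivalence of norms on the space of $3\times3$ matrices gives $\|M\|_\infty\ge c\|M\|$ for an absolute constant $c>0$. Chaining these bounds yields $\|(B_{\underline j})^{-1}\underline v\|\ge c\varepsilon\,\|(B_{\underline j})^{-1}\|$, which is the assertion with $C:=c\varepsilon$.

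I do not expect a genuine obstacle here; the only points needing brief care are the verification that $(B_{\underline j})^{-1}$ is entrywise non-negative and the observation that the resulting constant $C=c\varepsilon$ depends only on $\varepsilon$ — in particular it is uniform in the mother sequence $\underline j\in\Sigma_*$ and in $\underline v\in T_\varepsilon$ — which is automatic because neither the non-negativity nor the dimensional norm-equivalence constant involves $\underline j$.
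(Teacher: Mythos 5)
Your argument is correct, and it takes a genuinely different route from the paper. The paper also starts from the entrywise non-negativity of $(B_{\underline{j}})^{-1}$, but then introduces an auxiliary strictly positive matrix $A$ mapping $T_0$ into $T_\varepsilon$ and invokes a cited result (Lemma~2.2 of B\'ar\'any--Morris) applied to $A(B_{\underline{j}})^{-1}$, finally removing $A$ at the cost of the factor $\|A\|^{-1}\|A^{-1}\|^{-1}$. You instead argue directly: $\underline{v}\geq\varepsilon\mathbf{1}$ coordinatewise on $T_\varepsilon$, so by non-negativity $(B_{\underline{j}})^{-1}\underline{v}\geq\varepsilon (B_{\underline{j}})^{-1}\mathbf{1}\geq 0$, the Euclidean norm of the row-sum vector dominates the induced $\infty$-norm, and norm equivalence on $3\times3$ matrices finishes the estimate with the explicit constant $C=\varepsilon/\sqrt{3}$. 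Each step checks out (including your verification that each $C_j^{-1}$, hence $(B_{\underline{j}})^{-1}=\Lambda_{j_n}^{-1}C_{j_n}^{-1}\cdots C_{j_1}^{-1}\Lambda_{j_0}$, is non-negative), and the constant is manifestly uniform in $\underline{j}$ and $\underline{v}$. What your approach buys is self-containedness, elementarity, and an explicit constant; what the paper's approach buys is brevity modulo the citation and the fact that the cited cone-contraction lemma is a standard tool reused in this literature. One small caveat: your coordinatewise bound $\underline{v}\geq\varepsilon\mathbf{1}$ needs $\varepsilon\leq 1/3$ (for $1/3<\varepsilon<1/2$ one would replace $\varepsilon$ by $1-2\varepsilon$ in the constant), whereas the lemma is stated "for every $\varepsilon>0$"; you flag this correctly, and since the lemma is only ever applied with small $\varepsilon$ (indeed the paper's own proof also implicitly needs $T_\varepsilon$ to be a nondegenerate target inside $T_0$), this is a harmless restriction rather than a gap.
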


	\begin{proof}		
	Let $A$ be an arbitrary but fixed matrix with strictly positive elements such that for every $\underline{w}\in T_0$, $A\underline{w}/\|A\underline{w}\|_1\in T_\varepsilon$. Since $(B_{\underline{j}})^{-1}\underline{v}$ has non-negative elements, we have $\frac{A(B_{\underline{j}})^{-1}\underline{v}}{\|A(B_{\underline{j}})^{-1}\underline{v}\|_1}\in T_\varepsilon$. By \cite[Lemma~2.2]{BoMo}, there exists a constant $C'>0$ depending only on $\varepsilon>0$ such that
	$$
	\|A(B_{\underline{j}})^{-1}\underline{v}\|\geq C'\|A(B_{\underline{j}})^{-1}\|.
	$$
	But clearly, $\|A(B_{\underline{j}})^{-1}\underline{v}\|\leq\|A\|\|(B_{\underline{j}})^{-1}\underline{v}\|$ and $\|A(B_{\underline{j}})^{-1}\|\geq\|A^{-1}\|^{-1}\|(B_{\underline{j}})^{-1}\|$, thus by choosing $C=C'\|A^{-1}\|^{-1}\|A\|^{-1}$, the claim follows.
	\end{proof}
	
	An immediate corollary of Lemma~\ref{lem:estim1} is that for every $\underline{v}\in T_\varepsilon$, every $n\geq1$, every mother sequence $\underline{j}\in\Sigma_*$
	\begin{equation}\label{eq:estimate3}
	\alpha_3(B_{\underline{j}})\geq C\|B_{\underline{j}}|(B_{\underline{j}})^{-1}V^\perp\|,
	\end{equation}
	where $\|A|Y\|$ denotes the restricted norm of the $3\times3$ matrix to the subspace $Y\subset\R^3$, that is, $\|A|Y\|=\sup_{\underline{v}\in Y}\|A\underline{v}\|$. More generally, denote by $\alpha_i(A|V)$ the $i$th singular value of the linear mapping $A|_V$ from $V$ to $\mathrm{Im}(A|_V)$.

	\begin{lemma}\label{lem:vectors}
		There exists $c>0$ such that for every mother sequence $\underline{j}\in\Sigma_*$ and for every $\underline{w}\in V$
		$$
		\|\mathrm{proj}(B_{\underline{j}}\underline{w})\|\geq c\alpha_2(B_{\underline{j}})\|\underline{w}\|.
		$$
	\end{lemma}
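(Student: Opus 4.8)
The plan is to relate the second singular value of the restricted map $B_{\underline{j}}|_V$ to the quantity $\|\mathrm{proj}(B_{\underline{j}}\underline{w})\|$, and then to transfer this to $\alpha_2(B_{\underline{j}})$ itself via a "small angle" argument between the relevant subspaces. First I would observe that the vector $\underline{u}=(1,1,1)^T$ plays a distinguished role: a direct computation using \eqref{eq:Cmatrix} (and the fact that $B_{\underline{j}}=\Lambda_{j_0}^{-1}C_{\underline{j}}\Lambda_{j_n}$) shows that the row sums of the matrices $C_j$ behave in a controlled way, so that $B_{\underline{j}}$ maps the direction $V^\perp=\mathbb{R}\underline{u}$ close to a fixed direction, with the length of $B_{\underline{j}}\underline{u}$ comparable to $\alpha_3(B_{\underline{j}})$ by \eqref{eq:estimate3} and Lemma~\ref{lem:estim1}. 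In particular the image of $V$ under $B_{\underline{j}}$ together with $B_{\underline{j}}\underline{u}$ spans $\mathbb{R}^3$ in a quantitatively non-degenerate way.

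The key steps, in order, are as follows. (1) Using submultiplicativity of $\varphi^s$ and the structure of $C_j$, show that $\|B_{\underline{j}}\underline{u}\|\asymp\alpha_3(B_{\underline{j}})$ with uniform constants, exploiting \eqref{eq:estimate3} applied to $\underline{v}=\underline{u}/3\in T_{1/3}\subset T_\varepsilon$ for $\varepsilon=1/3$. (2) Show that the angle between $\mathrm{Im}(B_{\underline{j}}|_V)$ and the line $\mathbb{R}\,B_{\underline{j}}\underline{u}$ is bounded away from $0$ uniformly in $\underline{j}$; equivalently, that the projection $\mathrm{proj}|_{\mathrm{Im}(B_{\underline{j}}|_V)}$ onto $V$ is uniformly bi-Lipschitz. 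This follows because $\mathrm{Im}(B_{\underline{j}})=\mathbb{R}^3$ and the three singular directions are controlled: the smallest one is (essentially) $B_{\underline{j}}\underline{u}$ direction by step (1) and \eqref{eq:estimate3}, so the plane $\mathrm{Im}(B_{\underline{j}}|_V)$ must be close to the span of the top two singular directions, which is transversal to the bottom one. (3) Combining (2) with the elementary bound $\|\mathrm{proj}(B_{\underline{j}}\underline{w})\|\geq c'\,\alpha_2(B_{\underline{j}}|_V)\|\underline{w}\|$ (since $\mathrm{proj}$ is bi-Lipschitz on the image plane and $\alpha_2(B_{\underline{j}}|_V)$ is by definition the second singular value of $B_{\underline{j}}$ restricted to the $2$-plane $V$), it remains to check $\alpha_2(B_{\underline{j}}|_V)\geq c''\,\alpha_2(B_{\underline{j}})$. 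For this, note that $\alpha_1(B_{\underline{j}}|_V)\leq\alpha_1(B_{\underline{j}})$ trivially and $\alpha_1(B_{\underline{j}}|_V)\alpha_2(B_{\underline{j}}|_V)=\|\wedge^2(B_{\underline{j}}|_V)\|$, while $\alpha_1(B_{\underline{j}})\alpha_2(B_{\underline{j}})=\|\wedge^2 B_{\underline{j}}\|$; the ratio of these two $2$-dimensional Jacobians is controlled by the angle from step (2) together with step (1), since passing from the full exterior square to its restriction to $\wedge^2 V$ costs only the factor measuring how $V$ sits relative to the top two singular directions.

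The main obstacle I expect is step (2): making quantitative the claim that the plane $B_{\underline{j}}(V)$ stays uniformly transversal to the projection direction $\underline{u}$, uniformly over all mother sequences. The danger is that although $\alpha_3(B_{\underline{j}})$ is associated to a direction close to $B_{\underline{j}}\underline{u}$, it is the \emph{preimage} direction $(B_{\underline{j}})^{-1}V^\perp$ that is pinned down by Lemma~\ref{lem:estim1} and \eqref{eq:estimate3}, not the image direction, so one has to argue that the smallest-singular-value image direction is itself close to $\mathbb{R}\,B_{\underline{j}}\underline{u}$ — which is exactly \eqref{eq:estimate3} read correctly, since $\|B_{\underline{j}}|(B_{\underline{j}})^{-1}V^\perp\|$ being comparable to $\alpha_3$ forces the unit vector in $(B_{\underline{j}})^{-1}V^\perp$ to be nearly a bottom singular vector, and its image is then nearly the bottom image singular direction and lies along $B_{\underline{j}}\underline{u}$. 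Once this alignment is established with uniform constants (depending only on $\delta$ through the regularity assumption and on $\max_{j\in I(P)}\alpha_1(\boldsymbol{A}_j)$, $\max\alpha_3(\boldsymbol{A}_j)^{-1}$), steps (1) and (3) are routine linear algebra, and the constant $c>0$ in the statement can be extracted explicitly.
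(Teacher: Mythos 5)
Your two-step skeleton (show that $\mathrm{proj}$ is uniformly bi-Lipschitz on $B_{\underline{j}}(V)$, then bound the minimal stretch of $B_{\underline{j}}$ on $V$ from below by a constant times $\alpha_2(B_{\underline{j}})$) is the same as the paper's, but the quantitative inputs you feed into it do not hold. Your step (1) is false: $\|B_{\underline{j}}\underline{u}\|$ is in general \emph{not} comparable to $\alpha_3(B_{\underline{j}})$, and it does not follow from Lemma~\ref{lem:estim1} or \eqref{eq:estimate3}, which control $(B_{\underline{j}})^{-1}$ on the positive cone, i.e.\ $B_{\underline{j}}$ restricted to the \emph{preimage} line $(B_{\underline{j}})^{-1}V^\perp$, not $B_{\underline{j}}$ applied to $\underline{u}\in V^\perp$. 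Concretely, take constant chipping rate $p\in(0,1/3)$, $p\neq 1/5$ (so that $\Lambda\equiv\frac12\mathrm{Id}$ and $B_{\underline{j}}=C_{\underline{j}}$), and the sequence with $C_{\underline{j}}=C_1^n$ in the notation of \eqref{eq:constantchipping}: since $(1,1,1)$ has a nonzero component along the dominant eigendirection $e_1$ (eigenvalue $1-2p$ versus $p$), one gets $\|C_1^n(1,1,1)^T\|\asymp(1-2p)^n\asymp\alpha_1(C_1^n)$, while $\alpha_2(C_1^n)\asymp\alpha_3(C_1^n)\asymp p^n$; so $\|B_{\underline{j}}\underline{u}\|/\alpha_3(B_{\underline{j}})\to\infty$, and $B_{\underline{j}}\underline{u}$ does not stay near any fixed direction either. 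Relatedly, in your last paragraph you claim the image under $B_{\underline{j}}$ of the unit vector of $(B_{\underline{j}})^{-1}V^\perp$ ``lies along $B_{\underline{j}}\underline{u}$''; it lies along $\underline{u}$ itself, because $B_{\underline{j}}\bigl((B_{\underline{j}})^{-1}V^\perp\bigr)=V^\perp$. As a result, the transversality in your step (2) (which must be transversality of the plane $B_{\underline{j}}(V)$ to the kernel $\mathbb{R}\underline{u}$ of $\mathrm{proj}$, not to $\mathbb{R}B_{\underline{j}}\underline{u}$) is never established; even the corrected alignment statement (bottom \emph{left} singular direction close to $\underline{u}$) only follows from Lemma~\ref{lem:estim1} when $\alpha_3\ll\alpha_2$, so a no-gap case would remain. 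Finally, your step (3) needs \emph{domain-side} information — that $\underline{u}$ has a uniformly large component along the bottom \emph{right} singular vector of $B_{\underline{j}}$, so that restricting $\wedge^2B_{\underline{j}}$ to $\wedge^2 V$ loses only a constant — which the image-side angles of steps (1)--(2) cannot supply. So the chain (1)$\Rightarrow$(2)$\Rightarrow$(3) breaks at its first link.

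All the facts you actually need follow quickly from the positivity of $(B_{\underline{j}})^{-1}$, and this is how the paper argues. For the transversality it uses duality in one line: for $\underline{w}\in V$ and $\underline{v}=(1,1,1)$, $0=\langle\underline{w},\underline{v}\rangle=\langle B_{\underline{j}}\underline{w},((B_{\underline{j}})^{-1})^T\underline{v}\rangle$, and $((B_{\underline{j}})^{-1})^T\underline{v}$ lies in the closed first octant, whose vectors make angle at most $\arccos(1/\sqrt{3})$ with $(1,1,1)$; hence $\sphericalangle(B_{\underline{j}}\underline{w},\underline{v})$ is uniformly bounded below and $\|\mathrm{proj}(B_{\underline{j}}\underline{w})\|\geq c\|B_{\underline{j}}\underline{w}\|$. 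For the second half, instead of exterior squares it uses a parallelepiped-volume estimate: for orthogonal $\underline{w}_1,\underline{w}_2\in V$, the volume of the image of the box spanned by $\underline{w}_1,\underline{w}_2,(B_{\underline{j}})^{-1}\underline{v}$ equals $\alpha_1\alpha_2\alpha_3$ times the original volume, and Lemma~\ref{lem:estim1} (i.e.\ $\|(B_{\underline{j}})^{-1}\underline{v}\|\geq C\alpha_3(B_{\underline{j}})^{-1}$) cancels the $\alpha_3$ factor, yielding $\|B_{\underline{j}}\underline{w}\|\geq c\,\alpha_2(B_{\underline{j}})\|\underline{w}\|$ on $V$ with no gap/no-gap case distinction. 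If you want to keep your $\wedge^2$ formulation of step (3), the correct substitute for your steps (1)--(2) is a Perron--Frobenius observation: $(B_{\underline{j}})^{-1}((B_{\underline{j}})^{-1})^T$ is entrywise non-negative, so the bottom right singular vector of $B_{\underline{j}}$ may be chosen entrywise non-negative, whence its inner product with $(1,1,1)$ is at least $1$ and $\|\wedge^2(B_{\underline{j}}|_V)\|\geq\alpha_1(B_{\underline{j}})\alpha_2(B_{\underline{j}})/\sqrt{3}$ — but this step is not in your proposal as written.
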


\begin{proof}
	 Let $\underline{w}\in V$ be arbitrary. Then, for any vector $\underline{v}\in V^\perp$
	 $$
	 0=\langle\underline{w},\underline{v}\rangle=\left\langle B_{\underline{j}}\underline{w},\left((B_{\underline{j}})^{-1}\right)^T\underline{v}\right\rangle.
	 $$
	 Since $\left((B_{\underline{j}})^{-1}\right)^T$ has non-negative elements, we have that $\left((B_{\underline{j}})^{-1}\right)^T\underline{v}$ is contained in the first octant $\{(x,y,z):x,y,z\geq0\}$, and so there exists a positive $\delta>0$ such that $\sphericalangle(\underline{v},B_{\underline{j}}\underline{w})>\delta$ for every mother sequence $\underline{j}$ and every $\underline{w}\in V$. And so, there exists $c>0$ such that
	 $$
	 \|\mathrm{proj}(B_{\underline{j}}\underline{w})\|\geq c\|B_{\underline{j}}\underline{w}\|.
	 $$
	
	 Let $\underline{w}_1,\underline{w_2}\in V$ be such that $\langle \underline{w}_1,\underline{w}_2\rangle=0$, and let $Z$ be the parallelepiped formed by $\underline{w}_1,\underline{w_2},(B_{\underline{j}})^{-1}\underline{v}$. Hence,
	 \[
	 \begin{split}
	 \alpha_1(B_{\underline{j}})\|\underline{w}_1\|\|B_{\underline{j}}\underline{w}_2\|\|\underline{v}\|&\geq\|B_{\underline{j}}\underline{w}_1\|\|B_{\underline{j}}\underline{w}_2\|\|\underline{v}\|\geq\mathrm{Vol}(B_{\underline{j}}(Z))\\
	 &=\alpha_1(B_{\underline{j}})\alpha_2(B_{\underline{j}})\alpha_3(B_{\underline{j}})\mathrm{Vol}(Z)\\
	 &\geq \alpha_1(B_{\underline{j}})\alpha_2(B_{\underline{j}})\alpha_3(B_{\underline{j}})\|\underline{w}_1\|\|\underline{w}_2\|\|(B_{\underline{j}})^{-1}\underline{v}\|\sqrt{2}/2\\
	 &\geq C\alpha_1(B_{\underline{j}})\alpha_2(B_{\underline{j}})\|\underline{w}_1\|\|\underline{w}_2\|,
	 \end{split}
	 \]
	 where in the last inequality, we used Lemma~\ref{lem:estim1}. Hence, the claim follows.
\end{proof}
	
	\begin{lemma}\label{lem:shape}
		For every mother sequence $\underline{j}\in\Sigma_*$ and every vector $\underline{w}$ with strictly positive coordinates, there exists $c>0$ such that
		$$
		G_{\underline{j}}(\underline{0})+c\underline{w}\in G_{\underline{j}}(T_0).
		$$
		In particular, for every $2$-dimensional subspace $Y$ with a normal vector of strictly positive coordinates, $\mathrm{proj}_Y(G_{\underline{j}}(W))=\mathrm{proj}_Y(G_{\underline{j}}(T_0))$.
	\end{lemma}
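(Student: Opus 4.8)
The plan is to read off the triangle $G_{\underline{j}}(T_0)$ in barycentric coordinates with respect to the columns of the linear part of $G_{\underline{j}}$ and then invoke the non-negativity of $(B_{\underline{j}})^{-1}$ already recorded above. Since $G_{\underline{j}}$ is a composition of the invertible affine maps $G_{j',j}$ of Lemma~\ref{lem:chart}, it is itself an invertible affine map, with linear part $B_{\underline{j}}$ and translation part $G_{\underline{j}}(\underline{0})$; consequently
$$
G_{\underline{j}}(T_0)=G_{\underline{j}}(\underline{0})+\{B_{\underline{j}}\underline{t}:t_1,t_2,t_3\geq0,\ t_1+t_2+t_3=1\}.
$$
Hence $G_{\underline{j}}(\underline{0})+\underline{v}\in G_{\underline{j}}(T_0)$ holds precisely when $(B_{\underline{j}})^{-1}\underline{v}$ has non-negative coordinates summing to $1$.

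Given $\underline{w}$ with strictly positive coordinates, I would then set $\underline{s}:=(B_{\underline{j}})^{-1}\underline{w}$. Since $(B_{\underline{j}})^{-1}$ has non-negative entries, $\underline{s}$ has non-negative coordinates; since $B_{\underline{j}}$ is invertible and $\underline{w}\neq\underline{0}$, we get $\underline{s}\neq\underline{0}$, so $s_1+s_2+s_3>0$. Taking $c:=(s_1+s_2+s_3)^{-1}>0$, the vector $(B_{\underline{j}})^{-1}(c\underline{w})=c\underline{s}$ has non-negative coordinates summing to $1$, which by the previous paragraph yields $G_{\underline{j}}(\underline{0})+c\underline{w}\in G_{\underline{j}}(T_0)$, the first claim. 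For the ``in particular'' statement, I would use that $G_{\underline{j}}$ is affine, so $G_{\underline{j}}(W)=\conv\{G_{\underline{j}}(\underline{0}),G_{\underline{j}}(\underline{e}_1),G_{\underline{j}}(\underline{e}_2),G_{\underline{j}}(\underline{e}_3)\}$ and hence $\mathrm{proj}_Y(G_{\underline{j}}(W))$ is the convex hull of the four projected vertices, while $\mathrm{proj}_Y(G_{\underline{j}}(T_0))$ is the convex hull of the last three; the two therefore coincide as soon as $\mathrm{proj}_Y(G_{\underline{j}}(\underline{0}))\in\mathrm{proj}_Y(G_{\underline{j}}(T_0))$. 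Applying the first part with $\underline{w}$ equal to the (strictly positive) normal of $Y$ gives $c>0$ with $G_{\underline{j}}(\underline{0})+c\underline{w}\in G_{\underline{j}}(T_0)$, and since $\mathrm{proj}_Y(\underline{w})=\underline{0}$ we conclude $\mathrm{proj}_Y(G_{\underline{j}}(\underline{0}))=\mathrm{proj}_Y(G_{\underline{j}}(\underline{0})+c\underline{w})\in\mathrm{proj}_Y(G_{\underline{j}}(T_0))$.

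The argument is entirely elementary once the structure is unpacked, and I do not expect a real obstacle: the only ingredient carrying geometric content is the non-negativity of the entries of $(B_{\underline{j}})^{-1}$, which has already been deduced from the fact that each $C_j^{-1}$ has non-negative entries and each $\Lambda_j$ is diagonal with positive diagonal. The only points needing minor care are verifying that the constant $c$ can be chosen strictly positive — equivalently, that $\underline{s}\neq\underline{0}$, which is where invertibility of $B_{\underline{j}}$ is used — and phrasing the projection step cleanly via the convex-hull description of $G_{\underline{j}}(W)$ so that the reduction to $\mathrm{proj}_Y(G_{\underline{j}}(\underline{0}))\in\mathrm{proj}_Y(G_{\underline{j}}(T_0))$ is transparent.
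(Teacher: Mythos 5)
Your argument is correct and takes essentially the same route as the paper: both proofs rest solely on the non-negativity of the entries of $(B_{\underline{j}})^{-1}$ together with invertibility, applied to the barycentric description of $G_{\underline{j}}(T_0)$, the paper merely obtaining the constant $c$ by a short contradiction argument involving the hyperplane $G_{\underline{j}}(V+\underline{v})$ instead of your explicit normalization $c=(s_1+s_2+s_3)^{-1}$. The projection step is handled the same way in both, by noting that $\mathrm{proj}_Y(G_{\underline{j}}(\underline{0}))=\mathrm{proj}_Y(G_{\underline{j}}(\underline{0})+c\underline{w})$ lies in $\mathrm{proj}_Y(G_{\underline{j}}(T_0))$.
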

	
	\begin{proof}
		There exists $c\in\mathbb{R}$ such that $G_{\underline{j}}(\underline{0})+c\underline{w}$ is contained in the hyperplane $G_{\underline{j}}(V+\underline{v})$, where $\underline{v}=(1/3,1/3,1/3)$. Indeed, if not, then $\underline{w}\in B_{\underline{j}}(V)$ and in particular, $(B_{\underline{j}})^{-1}\underline{w}\in V$. Since $(B_{\underline{j}})^{-1}\underline{w}$ has strictly positive coordinates this is impossible.
		
		Now, let us argue by contradiction and suppose that\linebreak $G_{\underline{j}}(\underline{0})+c\underline{w}\in G_{\underline{j}}(V+\underline{v})\setminus G_{\underline{j}}(T_0)$. Clearly,  $G_{\underline{j}}^{-1}(G_{\underline{j}}(\underline{0})+c\underline{w})=c(B_{\underline{j}})^{-1}\underline{w}\in (V+\underline{v})\setminus T_0$, but this is impossible since $(B_{\underline{j}})^{-1}\underline{w}$ has strictly positive elements.
		
		Finally, let $Y\subset\R^3$ be a $2$-dimensional {subspace} with normal vector $\underline{w}$. Since $\mathrm{proj}_Y(G_{\underline{j}}(\underline{0})+c\underline{w})=\mathrm{proj}_Y(G_{\underline{j}}(\underline{0}))$, the last claim follows.
	\end{proof}
	
	\begin{lemma}\label{lem:area}
		There exists a uniform constant $C>0$ such that for every mother sequence $\underline{j}\in\Sigma^*$
		$$
		\mathrm{Area}\left(\mathrm{proj}(G_{\underline{j}}(T_0))\right)\geq C\alpha_1(B_{\underline{j}})\alpha_2(B_{\underline{j}}).
		$$
	\end{lemma}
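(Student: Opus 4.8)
The plan is to express the area of $\mathrm{proj}(G_{\underline{j}}(T_0))$ in terms of the singular values of the restriction of $B_{\underline j}$ to the plane $V+\underline v$ (with $\underline v=(1/3,1/3,1/3)$), and then to relate those to $\alpha_1(B_{\underline j})$ and $\alpha_2(B_{\underline j})$. First I would observe that $G_{\underline j}$ is the affine map $x\mapsto B_{\underline j}x+G_{\underline j}(\underline 0)$, and that $T_0$ lies in the plane $V+\underline v$; hence $G_{\underline j}(T_0)$ lies in the plane $B_{\underline j}(V)+G_{\underline j}(\underline 0)$, which has a normal vector of strictly positive coordinates by the argument already used in the proof of Lemma~\ref{lem:shape} (namely $(B_{\underline j}^{-1})^T(1,1,1)$ lies in the open first octant, since $B_{\underline j}^{-1}$ has non-negative entries and is invertible). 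Because the normal to the image plane has strictly positive coordinates, the orthogonal projection $\mathrm{proj}$ restricted to that plane is a linear isomorphism onto $V$ whose Jacobian is bounded below by a uniform constant: it equals the cosine of the angle between $(1,1,1)$ and that normal, and this angle is uniformly bounded away from $\pi/2$ since $(B_{\underline j}^{-1})^T(1,1,1)$ lies in a compact subcone of the open first octant (this uniformity is exactly the $\delta>0$ separation of angles appearing in Lemma~\ref{lem:vectors}). Therefore $\mathrm{Area}(\mathrm{proj}(G_{\underline j}(T_0)))\geq c_0\,\mathrm{Area}(G_{\underline j}(T_0)) = c_0\,|\det(B_{\underline j}|_{V+\underline v})|\,\mathrm{Area}(T_0)$ for a uniform $c_0>0$, where $|\det(B_{\underline j}|_{V+\underline v})|$ denotes the area-scaling factor of the affine map $T_0\to G_{\underline j}(T_0)$, i.e. the product $\alpha_1(B_{\underline j}|V)\alpha_2(B_{\underline j}|V)$ of the singular values of the linear part $B_{\underline j}|_V\colon V\to\mathrm{Im}(B_{\underline j}|_V)$.

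It then remains to show $\alpha_1(B_{\underline j}|V)\alpha_2(B_{\underline j}|V)\geq C\,\alpha_1(B_{\underline j})\alpha_2(B_{\underline j})$ with a uniform constant. For the larger factor this is routine: $\alpha_1(B_{\underline j}|V)=\|B_{\underline j}|V\|\geq c\,\|B_{\underline j}\|=c\,\alpha_1(B_{\underline j})$ follows because $V$ contains a unit vector whose $B_{\underline j}$-image has norm comparable to $\|B_{\underline j}\|$ — indeed the top singular direction of $B_{\underline j}$ cannot be too close to the one-dimensional space $V^\perp=\mathrm{span}(1,1,1)$ once one uses that $\alpha_3(B_{\underline j})$ is comparable to $\|B_{\underline j}|(B_{\underline j}^{-1}V^\perp)\|$ by \eqref{eq:estimate3}; alternatively, and more directly, Lemma~\ref{lem:vectors} already gives $\|\mathrm{proj}(B_{\underline j}\underline w)\|\geq c\,\alpha_2(B_{\underline j})\|\underline w\|$ for all $\underline w\in V$, which is precisely the statement that the second singular value of $\mathrm{proj}\circ B_{\underline j}|_V$ is $\gtrsim\alpha_2(B_{\underline j})$. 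Since $\mathrm{proj}$ restricted to the image plane is bi-Lipschitz with uniform constants (by the same angle bound as above), the second singular value of $B_{\underline j}|_V$ itself satisfies $\alpha_2(B_{\underline j}|V)\geq c\,\alpha_2(B_{\underline j})$. Combining these two estimates with the previous paragraph yields
$$
\mathrm{Area}(\mathrm{proj}(G_{\underline j}(T_0)))\geq c_0\,\mathrm{Area}(T_0)\,\alpha_1(B_{\underline j}|V)\alpha_2(B_{\underline j}|V)\geq C\,\alpha_1(B_{\underline j})\alpha_2(B_{\underline j}),
$$
which is the claim.

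The main obstacle is not any single computation but the need to make every comparison constant \emph{uniform} over the infinite family of mother sequences $\underline j\in\Sigma_*$. This is where the positivity/non-negativity of the entries of $B_{\underline j}^{-1}$ (equivalently of $C_{\underline j}^{-1}$, noted just before \eqref{eq:s0isB}) and the regularity hypothesis on the chipping rates are essential: they confine the vectors $(B_{\underline j}^{-1})^T\underline v$ and $(B_{\underline j})^{-1}\underline v$ to a fixed compact subcone of the open first octant, which is what forces the relevant angles to stay uniformly away from degeneracy and lets us invoke Lemma~\ref{lem:estim1} and Lemma~\ref{lem:vectors}. Once that uniform cone confinement is in hand, the rest is linear algebra: the area of a projected parallelogram is the area-scaling factor of the map times the Jacobian of the projection, and the two singular values of the linear part are controlled by $\alpha_1(B_{\underline j})$ and $\alpha_2(B_{\underline j})$ respectively via the lemmas already proved.
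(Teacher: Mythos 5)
Your proposal is correct in substance and reaches the claim by a genuinely different organisation than the paper. The paper's proof is a single volume--height computation on the tetrahedron: by Lemma~\ref{lem:shape} the point $G_{\underline j}(\underline 0)+c\underline v$ lies in $G_{\underline j}(T_0)$, so by \eqref{eq:estimate3} the height of $G_{\underline j}(W)$ over the face $G_{\underline j}(T_0)$ is at most $C\alpha_3(B_{\underline j})$, while $\mathrm{Vol}(G_{\underline j}(W))=\alpha_1(B_{\underline j})\alpha_2(B_{\underline j})\alpha_3(B_{\underline j})\mathrm{Vol}(W)$; dividing gives $\mathrm{Area}(G_{\underline j}(T_0))\geq C'\alpha_1(B_{\underline j})\alpha_2(B_{\underline j})$ in one stroke, without ever introducing the restricted singular values $\alpha_i(B_{\underline j}|V)$. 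You instead factor the estimate as (projection Jacobian)$\times$(area scaling of $B_{\underline j}|_V$) and control the two restricted singular values separately. Your explicit treatment of the projection step is actually a point the paper leaves implicit: its displayed chain only bounds $\mathrm{Area}(G_{\underline j}(T_0))$, and passing to $\mathrm{Area}(\mathrm{proj}(G_{\underline j}(T_0)))$ needs exactly the uniform angle bound you state. Note that no ``compact subcone'' is needed for that: the normal $(B_{\underline j}^{-1})^T(1,1,1)$ lies in the closed first octant, hence makes angle at most $\arccos(1/\sqrt3)$ with $(1,1,1)$, which already gives a uniform lower bound on the Jacobian of $\mathrm{proj}$ restricted to the image plane. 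Your $\alpha_2$ estimate, $\alpha_2(B_{\underline j}|V)\geq\alpha_2(\mathrm{proj}\,B_{\underline j}|V)\geq c\alpha_2(B_{\underline j})$ via Lemma~\ref{lem:vectors}, is fine.

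The one step you should repair is the bound $\alpha_1(B_{\underline j}|V)\geq c\,\alpha_1(B_{\underline j})$. As phrased (``the top singular direction cannot be too close to $V^\perp$'') it is not what \eqref{eq:estimate3} provides, and it would not suffice anyway: \eqref{eq:estimate3} controls $\|B_{\underline j}u\|$ for $u$ spanning $B_{\underline j}^{-1}V^\perp$, not the norm of $B_{\underline j}$ on $V^\perp$. The correct version of your idea is to decompose the top singular direction along $\R^3=V\oplus B_{\underline j}^{-1}V^\perp$; this splitting is uniformly transversal because $B_{\underline j}^{-1}(1,1,1)$ lies in the closed first octant, so the oblique components have uniformly bounded norms, and then $\alpha_1(B_{\underline j})\leq C_1\alpha_1(B_{\underline j}|V)+C_2\|B_{\underline j}u\|\leq C_1\alpha_1(B_{\underline j}|V)+C_2'\alpha_3(B_{\underline j})\leq(C_1+C_2')\alpha_1(B_{\underline j}|V)$, using \eqref{eq:estimate3} and the trivial inequality $\alpha_3(B_{\underline j})\leq\alpha_1(B_{\underline j}|V)$. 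Alternatively, you only ever need the product $\alpha_1(B_{\underline j}|V)\alpha_2(B_{\underline j}|V)$, and that has a one-line bound by comparing the volume of the parallelepiped spanned by two orthonormal vectors of $V$ and the unit vector of $B_{\underline j}^{-1}V^\perp$ with (base area)$\times$(height $\leq C\alpha_3(B_{\underline j})$) --- which is essentially the paper's tetrahedron computation, so the factor-wise detour can be avoided entirely.
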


\begin{proof}
	Let $\underline{v}=(1/3,1/3,1/3)$ as usual. By Lemma~\ref{lem:shape}, there exists $c>0$ such that $G_{\underline{j}}(\underline{0})+c\underline{v}\in G_{\underline{j}}(T_0)$. Since $G_{\underline{j}}^{-1}(G_{\underline{j}}(\underline{0})+c\underline{v})=c(B_{\underline{j}})^{-1}\underline{v}\in T_0$, we have $\|c(B_{\underline{j}})^{-1}\underline{v}\|\leq1$. Thus, by \eqref{eq:estimate3} we get
	\begin{equation}\label{eq:interstep}
	c\|\underline{v}\|=\left\|B_{\underline{j}}\frac{(B_{\underline{j}})^{-1}\underline{v}}{\|(B_{\underline{j}})^{-1}\underline{v}\|}\right\|\|c(B_{\underline{j}})^{-1}\underline{v}\|\leq C\alpha_3(B_{\underline{j}}).
	\end{equation}
	Let us denote the height of $G_{\underline{j}}(W)$ with respect to the side $G_{\underline{j}}(T_0)$ by $m$. Then
	\[
	\begin{split}
	\alpha_1(B_{\underline{j}})\alpha_2(B_{\underline{j}})\alpha_3(B_{\underline{j}})\mathrm{Area(T_0)}\frac{\sqrt{3}}{18}&=\alpha_1(B_{\underline{j}})\alpha_2(B_{\underline{j}})\alpha_3(B_{\underline{j}})\mathrm{Vol}(W)\\
	&=\mathrm{Vol}(G_{\underline{j}}(W))=\mathrm{Area}(G_{\underline{j}}(T_0))m/6\\
	&\leq\mathrm{Area}(G_{\underline{j}}(T_0))c\|\underline{v}\|/6\\
	&\leq \mathrm{Area}(G_{\underline{j}}(T_0))\alpha_3(B_{\underline{j}})C/6,
	\end{split}
	\]
	where in the last inequality, we used \eqref{eq:interstep}.
\end{proof}
	
	\begin{lemma}\label{lem:projosc}
	For every $n\geq1$ and every mother sequences $\underline{j},\underline{j}'\in\Sigma_n$ such that $j_0=j'_0$ but $\underline{j}\neq\underline{j}'$, i.e. there exists $1\leq k\leq n$ such that $j_k\neq j_k'$ then
	$$\mathrm{proj}(G_{\underline{j}}(W^o))\cap \mathrm{proj}(G_{\underline{j}'}(W^o))=\emptyset,$$
	where we recall that $\mathrm{proj}$ denotes the orthogonal projection to the subspace $V$ of normal vector $\underline{v}=(1/3,1/3,1/3)$.
	\end{lemma}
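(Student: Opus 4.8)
\emph{Plan.} Fix the common root $j_0\in I(P_0)$ and work inside its chart. Put $Q_n:=F_{j_0}^{-1}(P_n)\cap W$, a convex polytope. Two facts set the stage. First, $T_0\subseteq Q_n$ for every $n$: iterating the fixed–point relation $G_{j',j}(\underline e_{\sigma_j^{-1}(s(j))})=\underline e_{\sigma_j^{-1}(s(j))}$ from \eqref{eq:images}, each vertex $\underline e_\ell$ of $T_0$ satisfies $F_{j_0}(\underline e_\ell)=F_{j_n}(\underline e_\ell)\in F_{j_n}(L)\subseteq H(P_n)\subseteq\partial P_n$ for a suitable $j_n\in I(P_n)$, so $\underline e_\ell\in Q_n$ and hence $T_0=\conv(\underline e_1,\underline e_2,\underline e_3)\subseteq Q_n$. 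Second, for every $\underline j=(j_0,\dots,j_n)\in\Sigma_n$ one has $G_{\underline j}(W)\subseteq Q_n$, because $F_{j_0}(G_{\underline j}(W))=F_{j_n}(W)$ is the convex hull of $A_{j_n}$ and three points on its edges, hence lies in $P_n$. For $v\in\Delta:=\proj(W)=\proj(T_0)$ write $\ell_v:=\proj^{-1}(v)\cap W$; this is a segment whose upper endpoint in the $(1,1,1)$–direction is $t(v):=v+\tfrac13(1,1,1)\in T_0$, so by convexity $Q_n\cap\ell_v=[\phi_n(v),t(v)]$ for a unique $\phi_n(v)\in\partial Q_n$.

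I would first reduce the claim. Since the orthogonal projection $\proj$ sends the interior of a convex body into the relative interior of its image, $\proj(G_{\underline j}(W^o))\subseteq\relint(\proj(G_{\underline j}(W)))$; thus it suffices to show that for $\underline j\neq\underline j'$ in $\Sigma_n$ with $j_0=j_0'$ the relative interiors of $\proj(G_{\underline j}(W))$ and $\proj(G_{\underline j'}(W))$ are disjoint. Assume not: the overlap is a non‑empty open subset of $V$, so it contains a \emph{generic} $v$, i.e.\ one for which $\ell_v$ is not contained in the affine hull of any face of $Q_n$, $G_{\underline j}(W)$ or $G_{\underline j'}(W)$ and $t(v)\notin G_{\underline j}(W)\cup G_{\underline j'}(W)$; all excluded $v$ form a finite union of lines. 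For such $v$, $(\phi_n(v),t(v))\subseteq\inter Q_n$.

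Next I would locate where $\ell_v$ meets a piece. The tetrahedron $G_{\underline j}(W)$ has apex $G_{\underline j}(\underline 0)$, ``lid'' $G_{\underline j}(T_0)$ spanning the hyperplane $H_{\underline j}:=\aff(G_{\underline j}(T_0))$, whose normal $(B_{\underline j}^{-1})^{T}(1,1,1)$ is strictly positive because $B_{\underline j}^{-1}\geq 0$, and three side faces through the apex; each side face is the $F_{j_0}^{-1}$–image of one of the three faces of $P_n$ meeting at $A_{j_n}$, hence lies on $\partial P_n$ and, intersected with $W$, on $\partial Q_n$. By Lemma~\ref{lem:shape} there is $c>0$ with $G_{\underline j}(\underline 0)+c(1,1,1)\in G_{\underline j}(T_0)$; since $G_{\underline j}(W)$ lies on the apex side of $H_{\underline j}$, the whole piece sits below $H_{\underline j}$ with the lid on top. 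Consequently, for $v\in\relint(\proj(G_{\underline j}(W)))$ the segment $\ell_v\cap G_{\underline j}(W)=[r_{\underline j}(v),q_{\underline j}(v)]$ leaves through the lid at its upper endpoint $q_{\underline j}(v)$ and enters through a side face at its lower endpoint $r_{\underline j}(v)\in\partial Q_n$; for generic $v$ this forces $r_{\underline j}(v)=\phi_n(v)$ — the unique point of $\partial Q_n$ on $\ell_v$ strictly below $t(v)$ — and $\phi_n(v)$ lies in the relative interior of a side face.

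For our generic $v$ in the assumed overlap we then get $r_{\underline j}(v)=\phi_n(v)=r_{\underline j'}(v)$. Since $\ell_v$ runs upward from $\phi_n(v)$ into each of the two pieces and $(1,1,1)$ is transversal to each of those side faces, $\phi_n(v)+\varepsilon(1,1,1)\in G_{\underline j}(W^o)\cap G_{\underline j'}(W^o)$ for all small $\varepsilon>0$. This contradicts the fact that distinct pieces of depth $n$ with the same root have disjoint interiors, which follows by induction on the first index where $\underline j$ and $\underline j'$ differ: a single chipping of a vertex $j'$ produces three tetrahedra $G_{j',j'i'}(W)$, $i'\in N(j')$, inside $W$ with pairwise disjoint interiors (a direct computation with the matrices $C_{j,\sigma,\underline p}$, or: they are the chart neighbourhoods of the three mutually adjacent new vertices, which meet only along shared edges), and applying an invertible affine map to nested bodies preserves disjointness of interiors. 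The main obstacle is exactly this one‑step disjointness together with the careful bookkeeping of the convex ``cap'' $\overline{\partial Q_n\setminus\partial W}$ — checking that it is a graph over $\Delta$ in the direction $(1,1,1)$, so that $\phi_n$ is well defined and the genericity reductions above are legitimate; the remaining estimates are elementary convex geometry.
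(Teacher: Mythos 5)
Your argument is correct, and it takes a genuinely different route from the paper's. The paper reduces at once to the first index $k$ where $\underline{j}$ and $\underline{j}'$ differ and, because pushing the direction $(1,1,1)$ through the common prefix turns it into the general positive direction $(B_{\underline{j}|_{k-1}})^{-1}(1,1,1)$, it must prove the \emph{stronger} statement that the two one-step pieces $G_{j_{k-1},j_k}(W^o)$ and $G_{j_{k-1},j_k'}(W^o)$ have disjoint shadows under projection along \emph{every} direction with strictly positive entries; via Lemma~\ref{lem:shape} this becomes a positional statement about the two lid triangles, which the paper asserts geometrically. You instead exploit the global convexity of $P_n$ pulled back into the root chart, a feature the paper never uses: the side faces of each chart tetrahedron $F_{j_n}(W)$ lie in faces of the simple polyhedron $P_n$, hence on $\partial Q_n$, so (by positivity of $(B_{\underline{j}})^{-1}$) an upward fibre $\ell_v$ enters each piece through $\partial Q_n$; since $T_0\subseteq Q_n$ gives $\ell_v\cap Q_n=[\phi_n(v),t(v)]$ with relative interior in $\inter Q_n$ for generic $v$, both entry points must equal $\phi_n(v)$, and a point just above it lies in both open pieces. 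This converts the projection claim into plain three-dimensional interior-disjointness of distinct same-depth pieces, which the prefix reduction brings down to the one-step case. What your route buys is that only this weaker, purely $3$-dimensional one-step fact is needed and the projection direction never has to be changed; the price is the $Q_n$ bookkeeping, which you carry out correctly (the identities $F_{j_0}(\underline{e}_\ell)=F_{j_m}(\underline{e}_\ell)$ along sibling sequences, $F_{j_0}\circ G_{\underline{j}}=F_{j_n}$, and the entry/exit analysis are all sound; the worry in your last sentence about the cap being a graph is already settled by convexity of $Q_n$ together with $T_0\subseteq Q_n$).

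Two thin spots, neither fatal. First, the one-step disjointness is asserted rather than proved ("meet only along shared edges" restates the claim). It is true and has a short proof in the spirit of the paper's own (equally terse) remark: in the chart of the common mother, every vertex of the piece with apex on the first coordinate axis except the shared $\lambda$-point $X$ lies in the plane $\{x_2=0\}$, and every vertex of the piece with apex on the second axis except $X$ lies in $\{x_1=0\}$; projecting along $\underline{e}_3$ therefore sends the two tetrahedra into the two closed half-planes of the $x_1x_2$-plane bounded by the line through the origin and the image of $X$, so their interiors are disjoint. Second, the exceptional set for the condition $t(v)\notin G_{\underline{j}}(W)\cup G_{\underline{j}'}(W)$ is $\proj\bigl(G_{\underline{j}}(W)\cap T_0\bigr)\cup\proj\bigl(G_{\underline{j}'}(W)\cap T_0\bigr)$, which need not be a finite union of lines; but this condition is superfluous, since the entry point lies strictly below the exit point, which is at most $t(v)$, so $r_{\underline{j}}(v)\neq t(v)$ automatically.
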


	\begin{proof}
	Let $1\leq k\leq n$ be the smallest integer such that $j_k\neq j_k'$. Since $G_{i',i}(W)\subseteq W$ for every $i\in I^*$ with mother $i'$, it is enough to show that for every $2$-dimensional subspace $Y\subset\R^2$ with normal vector of strictly positive entries
		$$
		\mathrm{proj}_Y(G_{j_{k-1},j_k}(W^o))\cap\mathrm{proj}_Y(G_{j_{k-1}',j_k'}(W^o))=\emptyset,
		$$
		where $\mathrm{proj}_Y$ is the orthogonal projection to the subspace $Y$. Hence, by Lemma~\ref{lem:shape} it is enough to show that
		$$\mathrm{proj}_Y(G_{j_{k-1},j_k}(T_0))^o\cap\mathrm{proj}_Y(G_{j_{k-1}',j_k'}(T_0))^o=\emptyset.$$
		However, this {simply follows by the geometric position of the triangles $G_{j_{k-1},j_k}(T_0)$ and $G_{j_{k-1}',j_k'}(T_0)$. That is, each of the triangles have a vertex on different coordinate axis, the common vertex is positioned on the plane formed by these coordinate axis, and the third vertices of both of the triangles lie on the other two coordinate-planes.}
	\end{proof}

\begin{lemma}\label{lem:length}
	There exists a constant $C>0$ such that for every mother sequence $\underline{j}\in\Sigma_*$ there exists an $i\in\{2,3\}$ such that
	$$
	\|\proj(G_{\underline{j}}(\underline{e}_i))-\proj(G_{\underline{j}}(\underline{e}_1))\|\geq C\alpha_1(B_{\underline{j}}).
	$$
\end{lemma}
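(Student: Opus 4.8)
\textbf{Proof plan for Lemma~\ref{lem:length}.}

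The plan is to compare the three vectors $\proj(G_{\underline{j}}(\underline{e}_i))$, $i=1,2,3$, via their pairwise differences, which are the $\proj$-images of $B_{\underline{j}}(\underline{e}_i-\underline{e}_j)$. First I would observe that $\underline{e}_1-\underline{e}_2,\ \underline{e}_2-\underline{e}_3,\ \underline{e}_3-\underline{e}_1$ all lie in $V$, so Lemma~\ref{lem:vectors} applies to each: $\|\proj(B_{\underline{j}}(\underline{e}_i-\underline{e}_k))\|\geq c\,\alpha_2(B_{\underline{j}})\|\underline{e}_i-\underline{e}_k\|=c\sqrt{2}\,\alpha_2(B_{\underline{j}})$. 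That, however, only gives an $\alpha_2$ lower bound; to reach $\alpha_1$ we need to know that at least one of these differences is "long" in the $\alpha_1$-direction. The cleanest way to produce that is the area estimate of Lemma~\ref{lem:area}: the projected triangle $\proj(G_{\underline{j}}(T_0))$ has vertices $\proj(G_{\underline{j}}(\underline{e}_i))$, so its area is $\tfrac12\|\proj(B_{\underline{j}}(\underline{e}_1-\underline{e}_2))\|\cdot d$, where $d$ is the distance from $\proj(G_{\underline{j}}(\underline{e}_3))$ to the line through $\proj(G_{\underline{j}}(\underline{e}_1)),\proj(G_{\underline{j}}(\underline{e}_2))$; in particular $d\le\max_i\|\proj(B_{\underline{j}}\underline{e}_i)-\proj(B_{\underline{j}}\underline{e}_1)\|$ up to a constant, and similarly the base is bounded by the max of the pairwise distances. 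Hence
$$
C\alpha_1(B_{\underline{j}})\alpha_2(B_{\underline{j}})\ \le\ \mathrm{Area}(\proj(G_{\underline{j}}(T_0)))\ \le\ C'\Big(\max_{i,k}\|\proj(B_{\underline{j}}(\underline{e}_i-\underline{e}_k))\|\Big)^2.
$$

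Now I would combine the two bounds. Call $M=\max_{i,k}\|\proj(B_{\underline{j}}(\underline{e}_i-\underline{e}_k))\|$ and note the trivial upper bound $M\le 2\|B_{\underline{j}}\|=2\alpha_1(B_{\underline{j}})$ (since $\proj$ is a contraction and $\|\underline{e}_i-\underline{e}_k\|=\sqrt2$). The area inequality gives $M^2\ge (C/C')\alpha_1(B_{\underline{j}})\alpha_2(B_{\underline{j}})$. If the maximum $M$ were attained at a pair not involving index $1$, say at $\underline{e}_2-\underline{e}_3$, I would still need a pair containing $1$; here I use the triangle inequality in $V$: $\|\proj(B_{\underline{j}}(\underline{e}_2-\underline{e}_3))\|\le \|\proj(B_{\underline{j}}(\underline{e}_2-\underline{e}_1))\|+\|\proj(B_{\underline{j}}(\underline{e}_1-\underline{e}_3))\|$, so one of the two terms on the right is $\ge M/2$, and that term \emph{does} involve index $1$. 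Therefore there is $i\in\{2,3\}$ with $\|\proj(G_{\underline{j}}(\underline{e}_i))-\proj(G_{\underline{j}}(\underline{e}_1))\|\ge M/2$.

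It remains to convert the $M^2\gtrsim\alpha_1\alpha_2$ bound into $M\gtrsim\alpha_1$; this is the step where one must be careful, since a priori $M$ could be as small as $\sqrt{\alpha_1\alpha_2}$, which is only $\gtrsim\alpha_1$ when $\alpha_2\gtrsim\alpha_1$. To close the gap I would instead bound the base of the triangle from below directly: by Lemma~\ref{lem:shape} the projected shape $\proj(G_{\underline{j}}(W))$ equals $\proj(G_{\underline{j}}(T_0))$, and its diameter is comparable to $\|B_{\underline{j}}|_V\|=\alpha_1(B_{\underline{j}}|_V)$ (because the only directions $B_{\underline{j}}$ could stretch that $\proj$ then kills lie in a complement of $V$, and by Lemma~\ref{lem:estim1}/\eqref{eq:estimate3} the contraction of $B_{\underline{j}}$ on $(B_{\underline{j}})^{-1}V^\perp$ is comparable to $\alpha_3(B_{\underline{j}})$, which is the smallest singular value, so it cannot be the direction realizing $\|B_{\underline{j}}\|$). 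Consequently $\alpha_1(B_{\underline{j}}|_V)$ is comparable to $\alpha_1(B_{\underline{j}})$, and the diameter of $\proj(G_{\underline{j}}(T_0))$ — which is the largest pairwise distance among its three vertices $\proj(G_{\underline{j}}(\underline{e}_i))$ — is $\ge C\alpha_1(B_{\underline{j}})$. Applying the triangle-inequality argument of the previous paragraph to this maximal distance finishes the proof. The main obstacle is precisely this last point: justifying that projecting to $V$ does not lose the top singular value, i.e. that $\alpha_1(B_{\underline{j}}|_V)\asymp\alpha_1(B_{\underline{j}})$ uniformly in $\underline{j}$, which is where the non-negativity of $(B_{\underline{j}})^{-1}$ and the earlier separation lemmas do the real work.
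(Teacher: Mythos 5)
Your reduction of the lemma to the single estimate $\alpha_1(\proj B_{\underline{j}}|_V)\geq c\,\alpha_1(B_{\underline{j}})$ is indeed the heart of the matter, and your endgame is a legitimate variant of the paper's: once the projected triangle $\proj(G_{\underline{j}}(T_0))$ has a side of length $\gtrsim\alpha_1(B_{\underline{j}})$, your triangle-inequality step (if the longest side joins the images of $\underline{e}_2$ and $\underline{e}_3$, then one of the sides meeting the image of $\underline{e}_1$ is at least half as long) replaces the paper's angular pigeonhole with the top singular direction of $\proj B_{\underline{j}}|_V$; passing from $\alpha_1(\proj B_{\underline{j}}|_V)$ to the longest side still needs a small directional pigeonhole among the three edge directions, but that is routine. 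Note also a conflation in your write-up: the relevant quantity is $\alpha_1(\proj B_{\underline{j}}|_V)$, not $\|B_{\underline{j}}|_V\|$; the projection sits between the two and is exactly what must be controlled.

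The genuine gap is in your justification of the key estimate. You argue via \eqref{eq:estimate3} that $B_{\underline{j}}$ contracts the line $(B_{\underline{j}})^{-1}V^\perp$ by a factor comparable to $\alpha_3(B_{\underline{j}})$ and conclude that this line ``cannot be the direction realizing $\|B_{\underline{j}}\|$''. That inference fails when the singular values are comparable: if $\alpha_3(B_{\underline{j}})\asymp\alpha_1(B_{\underline{j}})$ (nothing proved so far excludes this uniformly in $\underline{j}$), the norm-maximizing direction may lie along, or arbitrarily close to, $(B_{\underline{j}})^{-1}V^\perp$. Even in the non-degenerate regime, ``not equal to'' is not quantitative: to make your route work you would have to decompose the top singular vector as $\underline{w}+\underline{z}$ with $\underline{w}\in V$ and $\underline{z}\in(B_{\underline{j}})^{-1}V^\perp$ (uniform transversality of these subspaces coming from the non-negativity of $(B_{\underline{j}})^{-1}$), subtract $\|B_{\underline{j}}\underline{z}\|\lesssim\alpha_3(B_{\underline{j}})$, invoke the angle-separation fact from the proof of Lemma~\ref{lem:vectors} that $\|\proj(B_{\underline{j}}\underline{w})\|\geq c\|B_{\underline{j}}\underline{w}\|$ for $\underline{w}\in V$, and handle the regime $\alpha_3\gtrsim\alpha_1$ separately through Lemma~\ref{lem:vectors} itself. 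None of this is in your sketch. The paper sidesteps the case analysis entirely: from Lemma~\ref{lem:area}, applied through the exterior square of $\proj B_{\underline{j}}|_V$, it gets $\alpha_1(\proj B_{\underline{j}}|_V)\alpha_2(\proj B_{\underline{j}}|_V)\geq C\alpha_1(B_{\underline{j}})\alpha_2(B_{\underline{j}})$ (this is \eqref{eq:bounds}), and then divides by the elementary inequality $\alpha_2(\proj B_{\underline{j}}|_V)\leq\alpha_2(B_{\underline{j}})$ to obtain \eqref{eq:restrictedsingvalue}. Your own first computation with Lemma~\ref{lem:area} already contained the needed area lower bound; you only lost the factor because you bounded the area above by the squared diameter instead of using the determinant (product of restricted singular values), which is why you ended up with $\sqrt{\alpha_1\alpha_2}$ and then leaned on the unproven claim.
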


\begin{proof}
	Let us consider the singular value decomposition of the linear map $\proj B_{\underline{j}}|_V\colon V\mapsto V$. Namely, let $\underline{x}_1,\underline{x}_2$ and $\underline{y}_1,\underline{y}_2$ be orthonormal bases of $V$ such that $\proj B_{\underline{j}}\underline{x}_i=\alpha_i(\proj B_{\underline{j}}|V)\underline{y}_i$ for $i=1,2$. 
	
	Now, let us consider the exterior product $\bigwedge V$. Clearly, $\dim\bigwedge V=1$ and $\proj B_{\underline{j}}|_V$ induces a linear map $(\proj B_{\underline{j}}|_V)^\wedge$ on $\bigwedge V$ naturally by
	$$
	(\proj B_{\underline{j}}|_V)^\wedge(\underline{x}\wedge\underline{y})=(\proj B_{\underline{j}}\underline{x})\wedge(\proj B_{\underline{j}}\underline{y})=:d_{\underline{j}}\ \underline{x}\wedge\underline{y},
	$$
	where $d_{\underline{j}}=\alpha_1(\proj B_{\underline{j}}|V)\alpha_2(\proj B_{\underline{j}}|V)\in\R$. Since
	$\|\underline{x}\wedge\underline{y}\|$ is the area of the parallelogram formed by the vectors $\underline{x},\underline{y}\in V$, we get by Lemma~\ref{lem:area} that there exists a constant $C>0$ such that for every mother sequence $\underline{j}\in\Sigma_*$
	\begin{equation}\label{eq:bounds}
	\alpha_1(\proj B_{\underline{j}}|V)\alpha_2(\proj B_{\underline{j}}|V)\geq C\alpha_1(B_{\underline{j}})\alpha_2(B_{\underline{j}}).
	\end{equation}
	Furthermore, since $\alpha_2(B_{\underline{j}})\geq\inf_{\underline{w}\in V}\|\proj B_{\underline{j}}\underline{w}\|$ we have that
	\begin{equation}\label{eq:restrictedsingvalue}
	\alpha_1(\proj B_{\underline{j}}|V)\geq C\alpha_1(B_{\underline{j}}).
	\end{equation}
	
	For every vector $\underline{z}\in V$ such that $\sphericalangle(\underline{x}_1,\underline{z})<\pi/2-\varepsilon$, $\|\proj B_{\underline{j}}\underline{z}\|\geq\alpha_1(\proj B_{\underline{j}}|V)\|z\|\cos(\pi/2-\varepsilon)$. Since the angle between $\underline{e}_3-\underline{e}_1$ and $\underline{e}_2-\underline{e}_1$ is $\pi/3$, by choosing $\varepsilon=\pi/12$ we get that there is an $i\in\{2,3\}$ such that
	$$
	\|\proj B_{\underline{j}}(\underline{e}_i-\underline{e}_1)\|\geq\cos(5\pi/12)\|\underline{e}_i-\underline{e}_1\|\alpha_1(\proj B_{\underline{j}}|V).
	$$
	The claim of the lemma follows from the combination of the previous inequality with \eqref{eq:restrictedsingvalue}.
\end{proof}

\subsection{Upper bound}

The proof of the upper bound is standard and follows easily from \cite{Fal2}, but we give here the details for the convenience of the reader. First, we show that $s_0\leq2$. 

\begin{lemma}\label{lem:ub}
Under the assumptions above, $s_0\leq2$. 
\end{lemma}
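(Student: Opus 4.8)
The plan is to prove that $\sum_{\underline{j}\in\Sigma_*}\varphi^s(B_{\underline{j}})<\infty$ for every $s\in(2,3)$; by \eqref{eq:s0isB} this is precisely the assertion $s_0\le2$. The starting point is the factorisation $\varphi^s(B_{\underline{j}})=\varphi^2(B_{\underline{j}})\,\alpha_3(B_{\underline{j}})^{\,s-2}$, valid for $2<s\le3$, which splits each term into a ``flat'' factor $\varphi^2(B_{\underline{j}})=\alpha_1(B_{\underline{j}})\alpha_2(B_{\underline{j}})$, to be summed over $\underline{j}\in\Sigma_n$ by a two-dimensional area count, and a small ``contracting'' factor $\alpha_3(B_{\underline{j}})^{s-2}$, to be controlled by the exponential decay in Lemma~\ref{lem:contr}.

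For the flat factor I would argue as follows. By Lemma~\ref{lem:area} and then Lemma~\ref{lem:shape} (applied to the subspace $V$, whose normal $(1,1,1)$ has strictly positive coordinates),
\[
\varphi^2(B_{\underline{j}})=\alpha_1(B_{\underline{j}})\alpha_2(B_{\underline{j}})\le C^{-1}\,\mathrm{Area}\bigl(\mathrm{proj}(G_{\underline{j}}(T_0))\bigr)=C^{-1}\,\mathrm{Area}\bigl(\mathrm{proj}(G_{\underline{j}}(W))\bigr).
\]
Fix $n\ge1$ and a starting vertex $j_0\in I(P_0)$. By Lemma~\ref{lem:projosc} the open sets $\mathrm{proj}(G_{\underline{j}}(W^o))$, over all $\underline{j}\in\Sigma_n$ beginning at $j_0$, are pairwise disjoint, and since $G_{\underline{j}}(W)\subset W$ by \eqref{eq:cont} they all lie inside $\mathrm{proj}(W)$; hence their areas sum to at most $\mathrm{Area}(\mathrm{proj}(W))$. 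Summing over the finitely many $j_0\in I(P_0)$ yields a bound \emph{independent of $n$}:
\[
\sum_{\underline{j}\in\Sigma_n}\varphi^2(B_{\underline{j}})\le C^{-1}\,\#I(P_0)\,\mathrm{Area}(\mathrm{proj}(W))=:C_1.
\]

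For the contracting factor I would combine Lemma~\ref{lem:contr} with the bounds $\delta<\lambda_i<1$ (from Lemma~\ref{lem:adaptedchart is unique} and regularity), which give, for $\underline{j}\in\Sigma_n$,
\[
\alpha_3(B_{\underline{j}})\le\alpha_1(B_{\underline{j}})\le\|\Lambda_{j_0}^{-1}\|\,\alpha_1(C_{\underline{j}})\,\|\Lambda_{j_n}\|\le\delta^{-1}C(1-\delta)^n.
\]
Putting the two estimates together,
\[
\sum_{\underline{j}\in\Sigma_*}\varphi^s(B_{\underline{j}})=\sum_{n\ge1}\sum_{\underline{j}\in\Sigma_n}\varphi^2(B_{\underline{j}})\,\alpha_3(B_{\underline{j}})^{\,s-2}\le C_1\sum_{n\ge1}\bigl(\delta^{-1}C(1-\delta)^n\bigr)^{s-2}<\infty
\]
since $s-2>0$, which proves $s_0\le2$ (the range $s\ge3$ needs nothing, as $s\mapsto\varphi^s$ is non-increasing).

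The step that really matters here — and the reason the auxiliary Lemmas~\ref{lem:shape}--\ref{lem:projosc} were developed — is the uniform-in-$n$ bound on $\sum_{\underline{j}\in\Sigma_n}\varphi^2(B_{\underline{j}})$. The naive alternative, estimating $\varphi^2(B_{\underline{j}})\le\varphi^2(\Lambda_{j_0}^{-1})\,\varphi^2(\Lambda_{j_n})\prod_{k=1}^n\varphi^2(C_{j_k})$ by submultiplicativity and then using $\#\Sigma_n=3^n\#I(P_0)$, is far too lossy and would only close up for unreasonably large $\delta$. The geometric bound — the sets $\mathrm{proj}(G_{\underline{j}}(W))$ are essentially disjoint pieces of the single planar triangle $\mathrm{proj}(W)$, each of area comparable to $\varphi^2(B_{\underline{j}})$ — is precisely what absorbs the $3^n$ branching, after which summing over $n$ is routine via Lemma~\ref{lem:contr}.
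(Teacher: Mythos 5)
Your proof is correct and follows essentially the same route as the paper's: the factorisation $\varphi^s(B_{\underline{j}})=\alpha_1(B_{\underline{j}})\alpha_2(B_{\underline{j}})\alpha_3(B_{\underline{j}})^{s-2}$, the uniform-in-$n$ bound on $\sum_{\underline{j}\in\Sigma_n}\alpha_1(B_{\underline{j}})\alpha_2(B_{\underline{j}})$ via Lemma~\ref{lem:area} together with the projection disjointness of Lemmas~\ref{lem:shape} and~\ref{lem:projosc} (with the same care about fixing $j_0$), and the geometric decay $\alpha_3(B_{\underline{j}})\leq\alpha_1(B_{\underline{j}})\leq C(1-\delta)^{|\underline{j}|}$ from Lemma~\ref{lem:contr} and the regularity bounds on $\lambda_j$. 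No gaps; your closing remark about why the naive submultiplicative count fails is a fair explanation of why the area argument is the essential ingredient.
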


\begin{proof}
	By Lemma~\ref{lem:shape} and Lemma~\ref{lem:projosc}, for every $n\in\mathbb{N}$, $\bigcup\limits_{\underline{j}\in\Sigma_n}\mathrm{proj}\circ G_{\underline{j}}(T_0)\subset T_0$ and $G_{\underline{j}}(T_0)^o\cap G_{\underline{i}}(T_0)^o=\emptyset$ for every $\underline{j}\neq\underline{i}\in\Sigma_n$.
	Hence, by Lemma~\ref{lem:area}
	\[
	\begin{split}
	\mathrm{Area}(\mathrm{proj}(T_0))&\geq\sum_{\underline{j}\in\Sigma_n}\mathrm{Area}\left(\mathrm{proj}(G_{\underline{j}}(T_0))\right)\\
		&\geq C\sum_{\underline{j}\in\Sigma_n}\alpha_1(B_{\underline{j}})\alpha_2(B_{\underline{j}}).
	\end{split}
	\]
	On the other hand, by Lemma~\ref{lem:adaptedchart is unique} and the regularity of the chipping rates $\bp$, we have that for every $j\in I^*$, $\delta<\lambda_j<1$, and so by Lemma~\ref{lem:contr}, there exists a constant $C>0$ such that for every $\underline{j}\in\Sigma_*$
	\begin{equation}\label{eq:Biscontr}
	\alpha_3(B_{\underline{j}})\leq\alpha_1(B_{\underline{j}})\leq C(1-\delta)^{|\underline{j}|}.
	\end{equation}
	Thus, for every $s>2$
	\[
	\begin{split}
		\sum_{n=1}^\infty\sum_{\underline{j}\in\Sigma_n}\alpha_1(B_{\underline{j}})\alpha_2(B_{\underline{j}})\alpha_3(B_{\underline{j}})^{s-2}&\leq C^{s-2}\sum_{n=1}^\infty(1-\delta)^{n(s-2)}<\infty.
	\end{split}\]
\end{proof}

\begin{lemma}\label{lem:upperbound}
	Under the assumptions above, $\overline{\dim}_B(X)\leq s_0$.
\end{lemma}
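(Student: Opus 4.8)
The plan is to run the standard stopping-time covering argument of Falconer~\cite{Fal2}, the only genuinely new point being that in $\R^3$ with $s_0\le 2$ the branches must be cut at the level of an \emph{intermediate} singular value rather than the top one. Fix $s>s_0$; by \eqref{eq:s0isB} we have $M:=\sum_{\underline{j}\in\Sigma_*}\varphi^s(B_{\underline{j}})<\infty$, and it suffices to show $N_\rho(X)\le C\rho^{-s}$ for all small $\rho$ and then let $s\downarrow s_0$. I will describe the main case $1<s\le 2$ (the relevant range, since Lemma~\ref{lem:ub} already gives $s_0\le 2$); the cases $s\le 1$ and $s>2$ go through with the obvious changes, cutting instead at $\alpha_1(B_{\underline{j}})$, resp.\ at $\alpha_3(B_{\underline{j}})$.

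For small $\rho>0$ let $\Xi_\rho\subset\Sigma_*$ be the set of mother sequences $\underline{j}$ that form the \emph{shortest} prefix of their branch with $\alpha_2(B_{\underline{j}})\le\rho$. By \eqref{eq:Biscontr}, $\alpha_2(B_{\underline{j}})\le\alpha_1(B_{\underline{j}})\le C(1-\delta)^{|\underline{j}|}$, so every sufficiently long mother sequence has such a prefix; hence $\Xi_\rho$ is a finite antichain through which every infinite branch passes exactly once. Since $G_{\underline{k}}(W)\subseteq G_{\underline{j}}(W)$ whenever $\underline{j}$ is a prefix of $\underline{k}$ (by \eqref{eq:cont}), the construction \eqref{eq:ifsconst} yields
$$X\ \subseteq\ \bigcup_{j_0\in I(P_0)}\ \bigcup_{\underline{j}\in\Xi_\rho}F_{j_0}\big(G_{\underline{j}}(W)\big).$$
Next I would record two uniform facts. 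Writing $B_{\underline{j}}=B_{\underline{j}^-}M_{\underline{j}}$, where $\underline{j}^-$ is the mother of $\underline{j}$ and $M_{\underline{j}}=\Lambda_{j_{n-1}}^{-1}C_{j_n}\Lambda_{j_n}$, regularity forces $\det C_j\ge\delta^3$, all entries of $C_j$ are $\le 1$, and $\delta<\lambda_j<1$, so $\|M_{\underline{j}}\|$ and $\|M_{\underline{j}}^{-1}\|$ are bounded by a constant $K=K(\delta)$; hence $\alpha_2$ can change by at most the factor $K$ in one chipping step, and since every strict prefix of $\underline{j}\in\Xi_\rho$ has second singular value exceeding $\rho$, we obtain $\rho/K<\alpha_2(B_{\underline{j}})\le\rho$ for all $\underline{j}\in\Xi_\rho$, in particular $\alpha_2(B_{\underline{j}}),\alpha_3(B_{\underline{j}})\le\rho$. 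Secondly, $F_{j_0}\circ G_{\underline{j}}$ is affine with linear part $\boldsymbol{A}_{j_0}\Lambda_{j_0}B_{\underline{j}}$, and $\boldsymbol{A}_{j_0}\Lambda_{j_0}$ ranges over a finite set of invertible matrices (depending only on $P_0$ and $\sigma$), so $F_{j_0}(G_{\underline{j}}(W))$ lies in a box whose three side lengths are $\le C\alpha_1(B_{\underline{j}})$, $\le C\alpha_2(B_{\underline{j}})$, $\le C\alpha_3(B_{\underline{j}})$ with $C$ uniform.

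A box whose two shortest sides are $\le\rho$ is covered by at most $C(\alpha_1(B_{\underline{j}})/\rho+1)$ balls of radius $\rho$ (cut it along its longest edge into $\lceil\alpha_1(B_{\underline{j}})/\rho\rceil$ pieces of diameter $\lesssim\rho$), so
$$N_\rho(X)\ \le\ C\,\#I(P_0)\sum_{\underline{j}\in\Xi_\rho}\Big(\tfrac{\alpha_1(B_{\underline{j}})}{\rho}+1\Big).$$
Now the bookkeeping: since $1<s\le 2$ and $\alpha_2(B_{\underline{j}})$ is comparable to $\rho$ on $\Xi_\rho$, $\varphi^s(B_{\underline{j}})=\alpha_1(B_{\underline{j}})\alpha_2(B_{\underline{j}})^{s-1}\ge c\,\alpha_1(B_{\underline{j}})\rho^{s-1}$, hence $\alpha_1(B_{\underline{j}})/\rho\le C\rho^{-s}\varphi^s(B_{\underline{j}})$ and $\sum_{\underline{j}\in\Xi_\rho}\alpha_1(B_{\underline{j}})/\rho\le C\rho^{-s}M$; likewise $\varphi^s(B_{\underline{j}})\ge\alpha_2(B_{\underline{j}})^s\ge c\rho^s$ (as $\alpha_1\ge\alpha_2$), forcing $\#\Xi_\rho\le M/(c\rho^s)$. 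Combining, $N_\rho(X)\le C'\rho^{-s}$, so $\overline{\dim}_B(X)\le s$, and $s\downarrow s_0$ finishes the lemma.

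The main obstacle — and the reason for stopping at the \emph{second} singular value — is exactly this last accounting. Stopping at $\alpha_1(B_{\underline{j}})\approx\rho$ would still give each image $F_{j_0}(G_{\underline{j}}(W))$ diameter $\approx\rho$, but the bound $\#\Xi_\rho\lesssim\rho^{-s}M$ would fail, because for very flat boxes $\varphi^s(B_{\underline{j}})=\alpha_1\alpha_2^{s-1}$ can be much smaller than $\rho^s$. A secondary technical nuisance is that $\alpha_2(B_{\underline{j}})$ need not be monotone decreasing along a branch (only up to the factor $K$), which is why $\Xi_\rho$ is defined through the \emph{shortest} qualifying prefix so as to remain an antichain; the two-sided estimate $\rho/K<\alpha_2(B_{\underline{j}})\le\rho$ then survives regardless.
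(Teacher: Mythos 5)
Your proposal is correct and follows essentially the same route as the paper: a Falconer-type stopping-time cut at the $\lceil s\rceil$-th singular value of the products $B_{\underline{j}}$, covering the images of $W$ by boxes with sides comparable to the singular values, counting $\rho$-balls, and bounding the resulting sum by $\sum_{\underline{j}\in\Sigma_*}\varphi^s(B_{\underline{j}})<\infty$. The only cosmetic differences are that the paper stops on $\alpha_{\ell}(\boldsymbol{A}_{j_0}B_{\underline{j}})$ and invokes submultiplicativity of $\varphi^s$ to absorb $\boldsymbol{A}_{j_0}$, whereas you absorb the finitely many matrices $\boldsymbol{A}_{j_0}\Lambda_{j_0}$ into constants and make the one-step comparability $\rho/K<\alpha_2(B_{\underline{j}})\leq\rho$ explicit, a point the paper leaves implicit.
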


\begin{proof}
	Let $s_0<s<3$ be arbitrary but fixed, and let $K:=\sum_{\underline{j}\in\Sigma_*}\varphi^s(B_{\underline{j}})$, and let $\ell=\lceil s\rceil$. For every infinite mother sequence $\underline{j}\in\Sigma$ and $\delta>0$, there exists a unique $n=n(\underline{j},\delta)\in\mathbb{N}$ such that $\alpha_\ell(\boldsymbol{A}_{j_0}B_{\underline{j}|_n})\leq\delta<\alpha_\ell(\boldsymbol{A}_{j_0}B_{\underline{j}|_{n-1}})$. Let $$\mathcal{M}_\delta=\{\underline{j}\in\Sigma_*:\alpha_\ell(\boldsymbol{A}_{j_0}B_{\underline{j}})\leq\delta<\alpha_\ell(\boldsymbol{A}_{j_0}B_{\underline{j}|_{|\underline{j}|-1}})\}.$$
	
	Let $B(\underline{0},1)$ be the unit ball centred at the origin. Since $T\subset B(\underline{0},1)$, $X\subset\bigcup_{j_0\in I(P),\underline{j}\in\mathcal{M}_\delta}F_{j_0}\circ G_{\underline{j}}(B(\underline{0},1))$ for every $\delta>0$. Furthermore, $F_{j_0}\circ G_{\underline{j}}(B(\underline{0},1))$ is an ellipse with main semi-axis of length $\alpha_i(\boldsymbol{A}_{j_0}B_{\underline{j}})$. Let $R_{\underline{j}}$ be the smallest closed rectangle with axis parallel to the main axis of $F_{j_0}\circ G_{\underline{j}}(B(\underline{0},1))$. Then, for every $\underline{j}\in\mathcal{M}_\delta$, $R_{\underline{j}}$ can be covered by at most $\prod_{n=1}^{\ell-1}\left\lceil\frac{\alpha_n(\boldsymbol{A}_{j_0}B_{\underline{j}})}{\alpha_\ell(\boldsymbol{A}_{j_0}B_{\underline{j}})}\right\rceil$-many cubes of side length $\delta>0$. Thus,
	\[
	\begin{split}
		N_{\sqrt{3}\delta}(X)&\leq\sum_{\underline{j}\in\mathcal{M}_\delta}\prod_{n=1}^{\ell-1}\left\lceil\frac{\alpha_n(\boldsymbol{A}_{j_0}B_{\underline{j}})}{\alpha_\ell(\boldsymbol{A}_{j_0}B_{\underline{j}})}\right\rceil\leq 4\sum_{\underline{j}\in\mathcal{M}_\delta}\prod_{n=1}^{\ell-1}\frac{\alpha_n(\boldsymbol{A}_{j_0}B_{\underline{j}})}{\alpha_\ell(\boldsymbol{A}_{j_0}B_{\underline{j}})}\\
		&\leq4\delta^{-s}\sum_{\underline{j}\in\mathcal{M}_\delta}\prod_{n=1}^{\ell-1}\frac{\alpha_n(\boldsymbol{A}_{j_0}B_{\underline{j}})}{\alpha_\ell(\boldsymbol{A}_{j_0}B_{\underline{j}})}\alpha_\ell(\boldsymbol{A}_{j_0}B_{\underline{j}})^s\\
		&\leq 4\delta^{-s}\sum_{\underline{j}\in\mathcal{M}_\delta}\varphi^s(\boldsymbol{A}_{j_0}B_{\underline{j}})\leq 4\delta^{-s}\max_{j\in I(P)}\varphi^s(\boldsymbol{A}_j)K,
	\end{split}\]
	which completes the proof since $s_0<s$ was arbitrary.
\end{proof}

\subsection{Lower bound}

Before we turn into the lower bound, observe that by $G_{j',j}(W)\subset W$ we get that for every $j\in I(P)$ there exists a non-empty compact set $Z_j=\bigcap_{n=1}^\infty\bigcup_{\underline{j}\in\Sigma_n}G_{\underline{j}}(W)$, moreover, $X=\bigcup_{j\in I(P)}F_j(Z_j)$.

\begin{lemma}\label{lem:divergent}
	Under the assumptions above, $s_0\geq1$.
\end{lemma}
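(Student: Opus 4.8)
The plan is to show directly that the series defining $s_0$ diverges for every exponent $s\in(0,1]$; since $s_0=\inf\{s>0:\sum_{\underline j\in\Sigma_*}\varphi^s(B_{\underline j})<\infty\}$ by \eqref{eq:s0isB}, this gives $s_0\ge1$ at once. For $s\le1$ we have $\varphi^s(B_{\underline j})=\alpha_1(B_{\underline j})^s$, and by \eqref{eq:Biscontr} there is $N$ with $\alpha_1(B_{\underline j})\le1$ whenever $|\underline j|\ge N$, so $\alpha_1(B_{\underline j})^s\ge\alpha_1(B_{\underline j})$ there. Hence it suffices to establish the scale-independent estimate
\[
\sum_{\underline j\in\Sigma_n}\alpha_1(B_{\underline j})\ \ge\ c\qquad\text{for every }n\ge1,
\]
with $c>0$ independent of $n$: granting it, $\sum_{\underline j\in\Sigma_n}\varphi^s(B_{\underline j})\ge c$ for all $n\ge N$, so $\sum_{\underline j\in\Sigma_*}\varphi^s(B_{\underline j})=\infty$.

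To prove that estimate I would use connectedness of the edge nets. The net $H(P_n)$ is connected, being the $1$-skeleton of the convex polytope $P_n$, and by \eqref{eq:Fiterate} it is covered by the sets $F_{j_0}\circ G_{\underline j}(L)$, $\underline j\in\Sigma_n$, where $j_0=j_0(\underline j)$ denotes the root of $\underline j$. For any connected set $E$ and any countable cover $\{U_k\}$ one has $\diam E\le\sum_k\diam U_k$ (project onto a line realising $\diam E$ and note that a cover of an interval by subsets of $\R$ has total diameter at least the length of the interval). The linear part of $F_{j_0}\circ G_{\underline j}$ is $\boldsymbol A_{j_0}\Lambda_{j_0}B_{\underline j}$, whose operator norm is at most $M\,\alpha_1(B_{\underline j})$ with $M:=\max_{j\in I(P)}\|\boldsymbol A_j\Lambda_j\|$; hence $\diam\big(F_{j_0}\circ G_{\underline j}(L)\big)\le M\,\alpha_1(B_{\underline j})\,\diam(W)=\sqrt2\,M\,\alpha_1(B_{\underline j})$. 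Combining these,
\[
\rho_0\ :=\ \inf_n\diam H(P_n)\ \le\ \sum_{\underline j\in\Sigma_n}\diam\!\big(F_{j_0}\circ G_{\underline j}(L)\big)\ \le\ \sqrt2\,M\sum_{\underline j\in\Sigma_n}\alpha_1(B_{\underline j}),
\]
so the estimate holds with $c=\rho_0/(\sqrt2\,M)$, provided $\rho_0>0$.

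It remains to check that $\diam H(P_n)\ge\rho_0>0$ uniformly in $n$, i.e.\ that the limit object is genuinely nondegenerate. For this I would show that the ``limit division point'' $F_{j_0}(\underline e_\ell)$ of an edge of $P$ survives in every $H(P_n)$: starting from the edge $(j_0,\sigma_{j_0}(\ell))$ and running the associated sibling sequence, an induction using adaptedness of the charts together with the first identity in \eqref{eq:images} shows that $G_{\underline j}(\underline e_\ell)=\underline e_\ell$ along that mother sequence, whence $F_{j_0}(\underline e_\ell)=F_{j_n}^{(n)}(\underline e_\ell)\in F_{j_n}^{(n)}(L)\subseteq H(P_n)$ for all $n$. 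Since $F_{j_0}(\underline e_1)$ and $F_{j_0}(\underline e_2)$ lie along distinct edges issuing from $A_{j_0}$, they are distinct, so one may take $\rho_0=\|F_{j_0}(\underline e_1)-F_{j_0}(\underline e_2)\|$. I expect this last point—the combinatorial verification that the sibling sequences actually fix the vertices of $W$—to be the only step requiring real care; the rest is soft. (As a shortcut, once Lemma~\ref{lem:upperbound} is available one even gets $s_0\ge1$ immediately, since $X$ is the $d_H$-limit of the continua $H(P_n)$, hence a continuum, and it contains the two distinct points $F_{j_0}(\underline e_1),F_{j_0}(\underline e_2)$, so $\overline{\dim}_B(X)\ge1$ and therefore $s_0\ge\overline{\dim}_B(X)\ge1$.)
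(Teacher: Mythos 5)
Your argument is correct. It rests on the same underlying mechanism as the paper's proof — connectedness forces the level-$n$ pieces to have total diameter bounded below, hence $\sum_{\underline j\in\Sigma_n}\alpha_1(B_{\underline j})\geq c>0$ uniformly in $n$, and then $\varphi^s=\alpha_1^s\geq\alpha_1$ for $s\leq1$ (using \eqref{eq:Biscontr}) gives divergence and $s_0\geq1$ via \eqref{eq:s0isB} — but the bookkeeping differs. The paper stays in the local chart: it takes the curve in $Z_i$ joining $\underline e_1$ and $\underline e_2$, orders the level-$n$ cylinders meeting it into a chain using \eqref{eq:images}, and telescopes $\sqrt2=\|\underline e_2-\underline e_1\|\leq\sum\|B_{\underline j}(\underline e_2-\underline e_1)\|\leq\sqrt2\sum\alpha_1(B_{\underline j})$; there the nondegeneracy of the "spanned distance" is automatic. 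You instead work with the ambient skeleton $H(P_n)$, replace the explicit chain ordering by the generic inequality $\diam E\leq\sum_k\diam U_k$ for covers of a connected set (which is a clean way to avoid the slightly delicate ordering claim), and in exchange you must prove a uniform lower bound on $\diam H(P_n)$; your persistence argument for the limit division points $F_{j_0}(\underline e_\ell)$ via the sibling sequence, adaptedness, and the first identity in \eqref{eq:images} does exactly that and is sound (this is the same fixed-point structure the paper exploits, repackaged). Your closing shortcut is also legitimate and non-circular: Lemma~\ref{lem:upperbound} does not use the present lemma, $X$ is a Hausdorff limit of connected compacta hence a continuum containing two distinct persistent points, so $\overline{\dim}_B(X)\geq1$ and $s_0\geq\overline{\dim}_B(X)\geq1$ — though the paper needs $s_0\geq1$ in the form proved here to feed Lemma~\ref{lem:lbdim} in the proof of Theorem~\ref{thm:main}, so the direct covering argument is still the more useful statement.
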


\begin{proof}
	Clearly, for every $i\in I(P)$, $Z_i$ contains a curve connecting $\underline{e}_1$ and $\underline{e}_2$. Let us denote this curve by $\Gamma$. Let $\mathcal{D}_n(i):=\{\underline{j}\in\Sigma_n:G_{\underline{j}}(W)^o\cap\Gamma\neq\emptyset\}$. By \eqref{eq:images}, we can order $\mathcal{D}_n(i)=\{\underline{j}_1,\ldots,\underline{j}_{\#\mathcal{D}_n(i)}\}$ such that   $G_{\underline{j}_\ell}(\underline{e}_1)=G_{\underline{j}_{\ell+1}}(\underline{e}_2)$. Hence,
	\[
		\sqrt{2}=\|\underline{e}_2-\underline{e}_1\|\leq\sum_{\underline{j}\in\mathcal{D}_n(i)}\|B_{\underline{j}}(\underline{e}_2-\underline{e}_1)\|\leq\sum_{\underline{j}\in\Sigma_n}\alpha_1(B_{\underline{j}})\|\underline{e}_2-\underline{e}_1\|,
\]
which implies $\sum_{\underline{j}\in\Sigma_*}\alpha_1(B_{\underline{j}})=\infty$.
\end{proof}

Let us now define a modified cut of the mother sequences: let $i\in I(P)$, and let
$$
\mathcal{M}_n(i):=\{\underline{j}\in \Sigma_*:j_0=i\text{ and }C(1-\delta)^{n+1}<\alpha_2(B_{\underline{j}})\leq C(1-\delta)^n\},
$$
where $C(1-\delta)^n$ is the upper estimate in Lemma~\ref{lem:contr}. Hence, for every $\underline{j}\in\mathcal{M}_n(i)$, $|\underline{j}|\leq n$. Furthermore, $\Sigma_*=\bigcup_{i\in I(p)}\bigcup_{n=1}^\infty\mathcal{M}_n(i)$.



\begin{lemma}\label{lem:subseq}
If $\sum_{\underline{j}\in\Sigma_*}\varphi^s(B_{\underline{j}})=\infty$ then there exists a sequence $n_k$ and an $i\in I(P)$ such that $\sum_{\underline{j}\in\mathcal{M}_{n_k}(i)}\varphi^s(B_{\underline{j}})> n_k^{-2}.$
\end{lemma}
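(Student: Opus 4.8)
The statement is a routine "pigeonhole / subsequence extraction" lemma, so the proof should be short. We are given that the double sum $\sum_{\underline{j}\in\Sigma_*}\varphi^s(B_{\underline{j}})=\infty$. Since $\Sigma_*=\bigcup_{i\in I(P)}\bigcup_{n=1}^\infty\mathcal{M}_n(i)$ (a countable disjoint union, because every mother sequence $\underline{j}$ with $j_0=i$ falls into exactly one $\mathcal{M}_n(i)$ by its value of $\alpha_2(B_{\underline{j}})$ — here we use that $\alpha_2(B_{\underline{j}})\le\alpha_1(B_{\underline{j}})\le C(1-\delta)^{|\underline{j}|}\to 0$ and $\alpha_2(B_{\underline{j}})>0$, so the dyadic-type window around $C(1-\delta)^n$ catches it for a unique $n$), we can split the divergent sum as
$$
\sum_{i\in I(P)}\sum_{n=1}^\infty \;\sum_{\underline{j}\in\mathcal{M}_n(i)}\varphi^s(B_{\underline{j}})=\infty.
$$
Since $I(P)$ is finite, there is some fixed $i\in I(P)$ for which $\sum_{n=1}^\infty a_n=\infty$, where $a_n:=\sum_{\underline{j}\in\mathcal{M}_n(i)}\varphi^s(B_{\underline{j}})$.

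**Main step.** Now I would argue by contradiction: suppose the conclusion fails for this $i$ and for \emph{every} sequence $n_k$. Failing for every sequence is the same as saying $a_n\le n^{-2}$ for all but finitely many $n$ (if $a_n>n^{-2}$ held for infinitely many $n$, those would form the desired sequence $n_k$). But $\sum_n n^{-2}<\infty$, so $\sum_n a_n<\infty$, contradicting divergence. Hence there must be infinitely many $n$ with $a_n>n^{-2}$; collecting them into an increasing sequence $n_k$ gives exactly $\sum_{\underline{j}\in\mathcal{M}_{n_k}(i)}\varphi^s(B_{\underline{j}})>n_k^{-2}$, as required.

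**Obstacle check.** There is essentially no analytic obstacle; the only thing to be careful about is the bookkeeping that $\{\mathcal{M}_n(i)\}_{n\ge 1}$ really partitions $\{\underline{j}\in\Sigma_*:j_0=i\}$ — i.e. that every such $\underline{j}$ lands in one and only one window $(C(1-\delta)^{n+1},C(1-\delta)^n]$. This is immediate from $0<\alpha_2(B_{\underline{j}})\le\alpha_1(B_{\underline{j}})\le C(1-\delta)^{|\underline{j}|}\le C(1-\delta)$ (using Lemma~\ref{lem:contr} and the inequality $\alpha_1(B_{\underline{j}})=\alpha_1(\Lambda_{j_0}^{-1}C_{\underline{j}}\Lambda_{j_n})$ together with $\delta<\lambda_j<1$), so each $\alpha_2(B_{\underline{j}})$ lies in exactly one of the half-open intervals whose union is $(0,C(1-\delta)]$. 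Once the partition is established, the finiteness of $I(P)$ and the elementary fact "$\sum a_n=\infty$ and $\sum n^{-2}<\infty$ forces $a_n>n^{-2}$ infinitely often" finish the proof with no further work. I would also remark that the exponent $2$ in $n^{-2}$ plays no special role here — any summable sequence would do — but it is chosen to match what is needed in the subsequent construction of the lower bound.
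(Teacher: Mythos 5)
Your argument is correct and is essentially the paper's own proof: both rest on the decomposition $\Sigma_*=\bigcup_{i\in I(P)}\bigcup_{n\ge1}\mathcal{M}_n(i)$, the finiteness of $I(P)$, and the summability of $\sum_n n^{-2}$, phrased as a contradiction/pigeonhole argument. The paper merely negates the conclusion globally (obtaining a uniform $N$ beyond which all the tail sums are $\le n^{-2}$) rather than first fixing a single $i$ with divergent sum, which is an immaterial difference.
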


\begin{proof}
Let us argue by contradiction. Suppose that there exists $N\geq1$ such that for every $n\geq N$ and $i\in I(P)$
$$
\sum_{\underline{j}\in\mathcal{M}_{n}(i)}\varphi^s(B_{\underline{j}})\leq n^{-2}.
$$
Hence,
\[
\begin{split}
\sum_{\underline{j}\in\Sigma_*}\varphi^s(B_{\underline{j}})&=\sum_{i\in I(P)}\sum_{n=1}^\infty\sum_{\underline{j}\in\mathcal{M}_n(i)}\varphi^s(B_{\underline{j}})\\
&\leq \sum_{i\in I(P)}\sum_{n=1}^N\sum_{\underline{j}\in\mathcal{M}_n(i)}\varphi^s(B_{\underline{j}})+\#I(P)\sum_{n=N}^\infty \frac{1}{n^2}<\infty,
\end{split}
\]
which is a contradiction.
\end{proof}

\begin{lemma}\label{lem:lbdim}
	For every $s\geq1$ and $i\in I(P)$ such that $\sum\limits_{\underline{j}\in\Sigma_*:j_0=i}\varphi^s(B_{\underline{j}})=\infty$, $\overline{\dim}_B\mathrm{proj}(Z_i)\geq s$.
\end{lemma}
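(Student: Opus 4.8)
The plan is to establish a lower bound for $\overline{\dim}_B\mathrm{proj}(Z_i)$ by constructing, for each scale, a mass distribution supported on $\mathrm{proj}(Z_i)$ and applying a mass-distribution-type argument adapted to the box-counting dimension. Since by hypothesis $\sum_{\underline{j}\in\Sigma_*:j_0=i}\varphi^s(B_{\underline{j}})=\infty$, Lemma~\ref{lem:subseq} furnishes a sequence $n_k\to\infty$ with $\sum_{\underline{j}\in\mathcal{M}_{n_k}(i)}\varphi^s(B_{\underline{j}})>n_k^{-2}$. The sets $\{\mathrm{proj}(G_{\underline{j}}(W))\}_{\underline{j}\in\mathcal{M}_{n_k}(i)}$ have pairwise disjoint interiors by Lemma~\ref{lem:projosc} (since distinct elements of $\mathcal{M}_{n_k}(i)$ are incomparable mother sequences sharing $j_0=i$), and each has diameter comparable to $\alpha_1(B_{\underline{j}})\le C(1-\delta)^{n_k}$ by \eqref{eq:Biscontr}, while containing — by Lemma~\ref{lem:area} together with Lemma~\ref{lem:length} — a ``fat'' piece: its projected image has area $\gtrsim\alpha_1(B_{\underline{j}})\alpha_2(B_{\underline{j}})$ and contains a segment of length $\gtrsim\alpha_1(B_{\underline{j}})$. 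The cut $\mathcal{M}_{n_k}(i)$ was designed precisely so that $\alpha_2(B_{\underline{j}})\asymp(1-\delta)^{n_k}$ is roughly constant across $\underline{j}\in\mathcal{M}_{n_k}(i)$, so each projected tile, at scale $\delta_k:=(1-\delta)^{n_k}$, looks like a (possibly very eccentric) rectangle of dimensions $\asymp\alpha_1(B_{\underline{j}})\times\delta_k$.

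Next I would count: cover $\mathrm{proj}(Z_i)$ by $\delta_k$-balls and bound $N_{\delta_k}$ from below. Because the projected tiles have disjoint interiors and tile-$\underline{j}$ needs at least $\asymp\alpha_1(B_{\underline{j}})/\delta_k$ many $\delta_k$-balls to cover its long direction (here Lemma~\ref{lem:length} guarantees the tile genuinely extends that far in $V$, not merely that $\alpha_1(\mathrm{proj}\,B_{\underline{j}}|V)$ is large), we get
$$
N_{\delta_k}(\mathrm{proj}(Z_i))\;\gtrsim\;\sum_{\underline{j}\in\mathcal{M}_{n_k}(i)}\frac{\alpha_1(B_{\underline{j}})}{\delta_k}\;=\;\delta_k^{-1}\sum_{\underline{j}\in\mathcal{M}_{n_k}(i)}\alpha_1(B_{\underline{j}}).
$$
Now for $s\in[1,2]$ and $\underline{j}\in\mathcal{M}_{n_k}(i)$ we have $\varphi^s(B_{\underline{j}})=\alpha_1(B_{\underline{j}})\alpha_2(B_{\underline{j}})^{s-1}\asymp\alpha_1(B_{\underline{j}})\delta_k^{s-1}$, whence $\alpha_1(B_{\underline{j}})\asymp\delta_k^{1-s}\varphi^s(B_{\underline{j}})$, and the lemma's hypothesis that $s\ge1$ forces us to note that $s\le2$ may be assumed (if $s>2$ the hypothesis fails by Lemma~\ref{lem:ub}, so WLOG $1\le s\le2$). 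Substituting,
$$
N_{\delta_k}(\mathrm{proj}(Z_i))\;\gtrsim\;\delta_k^{-1}\cdot\delta_k^{1-s}\sum_{\underline{j}\in\mathcal{M}_{n_k}(i)}\varphi^s(B_{\underline{j}})\;\gtrsim\;\delta_k^{-s}\,n_k^{-2}.
$$
Taking logarithms, $\dfrac{\log N_{\delta_k}(\mathrm{proj}(Z_i))}{-\log\delta_k}\ge s-\dfrac{2\log n_k}{-\log\delta_k}+O\!\left(\dfrac{1}{-\log\delta_k}\right)$, and since $-\log\delta_k=n_k\log\frac1{1-\delta}$ grows linearly in $n_k$ while $\log n_k$ grows only logarithmically, the correction term tends to $0$. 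Hence $\overline{\dim}_B\mathrm{proj}(Z_i)\ge\limsup_k\frac{\log N_{\delta_k}}{-\log\delta_k}\ge s$.

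The main obstacle, and the step requiring the most care, is the lower bound on $N_{\delta_k}$: one must be sure that the long axes of the many projected tiles cannot ``conspire'' to be covered efficiently by fewer balls — i.e. that disjointness of the tiles' interiors really does translate into additivity of covering numbers at scale $\delta_k$. This is where Lemma~\ref{lem:projosc} (disjoint projected interiors) is essential, but one still has to argue that a $\delta_k$-ball can meet only a bounded number of the tiles in a way that accounts for their long extent; the clean way is to observe that a single $\delta_k$-ball intersects $\mathrm{proj}(G_{\underline{j}}(T_0))$ in a set of area $O(\delta_k^2)$, sum the area lower bounds $\asymp\alpha_1(B_{\underline{j}})\delta_k$ from Lemma~\ref{lem:area}, and divide — giving $N_{\delta_k}\gtrsim\delta_k^{-2}\sum_{\underline{j}}\alpha_1(B_{\underline{j}})\alpha_2(B_{\underline{j}})\asymp\delta_k^{-1}\sum_{\underline{j}}\alpha_1(B_{\underline{j}})$, the same bound by a cleaner route. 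I would present the area-based counting as the main argument and invoke Lemma~\ref{lem:length} only to confirm the tiles are nondegenerate in $V$.
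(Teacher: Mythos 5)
Your primary route follows the paper's own proof quite closely (the cut $\mathcal{M}_{n_k}(i)$ from Lemma~\ref{lem:subseq}, the scale $\delta_k\asymp(1-\delta)^{n_k}$, Lemma~\ref{lem:length} to get a long direction of size $\alpha_1(B_{\underline{j}})$, and the conversion $\alpha_1/\alpha_2\asymp\delta_k^{1-s}\varphi^s$ on the cut, with the reduction to $1\le s\le 2$ via Lemma~\ref{lem:ub}), but two of your steps do not hold as written. First, the ``cleaner'' area-based count, which you say you would make the main argument, is genuinely wrong: your $\delta_k$-balls are required to cover $\mathrm{proj}(Z_i)$, not the tiles $\mathrm{proj}(G_{\underline{j}}(T_0))$. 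The set $Z_i$ is a proper fractal subset of $\bigcup_{\underline{j}}G_{\underline{j}}(W)$, and $\mathrm{proj}(Z_i)\cap\mathrm{proj}(G_{\underline{j}}(T_0))$ can have zero area (the projected children tiles do not cover the parent tile), so the inequality ``total tile area $\le N_{\delta_k}\cdot O(\delta_k^2)$'' has no justification and the conclusion $N_{\delta_k}\gtrsim\delta_k^{-2}\sum\alpha_1\alpha_2$ does not follow; Lemma~\ref{lem:area} measures the tiles, not $Z_i$. The same conflation appears in your length-based count: what forces $\gtrsim\alpha_1(B_{\underline{j}})/\delta_k$ balls per tile is not that the \emph{tile} extends that far, but that $\mathrm{proj}(Z_i)\cap\mathrm{proj}(G_{\underline{j}}(T_0))$ itself contains curves joining $\mathrm{proj}\,G_{\underline{j}}(\underline{e}_1)$ to $\mathrm{proj}\,G_{\underline{j}}(\underline{e}_2)$ and to $\mathrm{proj}\,G_{\underline{j}}(\underline{e}_3)$ (inherited from the curves in $Z_i$), whose endpoints are $\ge C\alpha_1(B_{\underline{j}})$ apart by Lemma~\ref{lem:length}. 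This is exactly the step the paper makes explicit, and you need to state it; the area route should be dropped.

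Second, your additivity step rests on the assertion that distinct elements of $\mathcal{M}_{n_k}(i)$ are incomparable mother sequences, so that Lemma~\ref{lem:projosc} gives pairwise disjoint projected interiors across the whole cut. This is not justified: $\mathcal{M}_n(i)$ is defined by the size of $\alpha_2(B_{\underline{j}})$, and a one-step factor $\Lambda_{j'}^{-1}C_j\Lambda_j$ need not shrink $\alpha_2$ by a factor $1-\delta$ (its operator norm can exceed $1-\delta$), so a word and one of its extensions can land in the same window; their projected tiles are then nested, not disjoint, and summing covering numbers over all of $\mathcal{M}_{n_k}(i)$ overcounts. The paper avoids this by intersecting $\mathcal{M}_n(i)$ with a fixed level $\Sigma_m$, where each point of $\mathrm{proj}(Z_i)$ lies in at most two projected tiles, and then summing over the at most $n$ levels, which costs only a factor $n$ and is harmless against $\delta_k^{-s}$ after taking logarithms. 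With that stratification (or a proof that $\mathcal{M}_{n_k}(i)$ is an antichain, which you do not give and which is not obvious), and with the corrected per-tile lower bound above, your computation coincides with the paper's and yields $N_{\delta_k}(\mathrm{proj}(Z_i))\gtrsim\delta_k^{-s}n_k^{-3}$, hence the claim.
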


\begin{proof}
	Trivially, $\mathrm{proj}(Z_i)\cap\mathrm{proj} G_{\underline{j}}(T_0)$ contains curves connecting $\mathrm{proj} G_{\underline{j}}(\underline{e}_1)$ and $\mathrm{proj} G_{\underline{j}}(\underline{e}_2)$, $\mathrm{proj} G_{\underline{j}}(\underline{e}_1)$ and $\mathrm{proj} G_{\underline{j}}(\underline{e}_3)$. Then for every $\underline{j}\in\Sigma_*$
	$$
	N_{r}\left(\mathrm{proj}(Z_i)\cap\mathrm{proj}(G_{\underline{j}}(T_0))\right)\geq\frac{\|\proj B_{\underline{j}}(\underline{e}_k-\underline{e}_1)\|}{r}
	$$
	Hence, by choosing $r=C(1-\delta)^{n+1}$ and $\underline{j}\in\mathcal{M}_n$, and applying Lemma~\ref{lem:length} we get
	$$
	N_{C(1-\delta)^{n+1}}\left(\mathrm{proj}(Z_i)\cap\mathrm{proj}(G_{\underline{j}}(T_0))\right)\geq C'\frac{\alpha_1(B_{\underline{j}})}{\alpha_2(B_{\underline{j}})}
	$$
	for some uniform constant $C'>0$. Since for every $x\in\proj(Z_i)$ there exist at most $2$ $\underline{j}\in\Sigma_m$ for every $m=1,2,\ldots$ such that $x\in\proj G_{\underline{j}}(T_0)$ we get that for every $m\leq n$
	\[
	\begin{split}
	N_{C(1-\delta)^{n+1}}(\mathrm{proj}(Z_i))&\geq N_{C(1-\delta)^n}\left(\mathrm{proj}(Z_i)\cap\bigcup_{\underline{j}\in\mathcal{M}_n(i)\cap\Sigma_m}\proj G_{\underline{j}}(T_0)\right)\\
	&\geq \frac{C'}{2}\sum_{\underline{j}\in\mathcal{M}_n(i)\cap\Sigma_m}\frac{\alpha_1(B_{\underline{j}})}{\alpha_2(B_{\underline{j}})}
\end{split}
\]
and so
\[
\begin{split}
	nN_{C(1-\delta)^{n+1}}(\mathrm{proj}(Z_i))\geq\frac{C'}{2}\sum_{\underline{j}\in\mathcal{M}_n(i)}\frac{\alpha_1(B_{\underline{j}})}{\alpha_2(B_{\underline{j}})}
	\geq\frac{C'}{2C^s(1-\delta)^{sn}}\sum_{\underline{j}\in\mathcal{M}_n(i)}\varphi^s(B_{\underline{j}})
\end{split}
\]
Hence, by Lemma~\ref{lem:subseq}, there exists a subsequence $n_k$ such that
$$
N_{C(1-\delta)^{n_k+1}}(\mathrm{proj}(Z_i))\geq\frac{C'}{2C^s(1-\delta)^{sn_k}n_k^3},
$$
which implies the claim.
\end{proof}

\begin{proof}[Proof of Theorem~\ref{thm:main}]
	By Lemma~\ref{lem:upperbound} and \eqref{eq:s0isB}, the upper bound $\overline{\dim}_B(X)\leq s_0$ follows. 
	
	For the lower bound, we have by Lemma~\ref{lem:lbdim} that for every $s\geq1$ with $s<s_0$ or $s=1$
	\[
	\begin{split}
	\overline{\dim}_B(X)&=\max\{\overline{\dim}_B(F_j(Z_j)):j\in I(P)\}\\
	&\geq\max\{\overline{\dim}_B(Z_j):j\in I(P)\}\\
	&\geq\max\{\overline{\dim}_B(\proj(Z_j)):j\in I(P)\}\geq s.
\end{split}
\]
Since $s$ was arbitrary, the claim follows.
\end{proof}

{
\section{The Hausdorff dimension of the edge net of the abraded polyhedron with constant chipping rate}
\label{sec:proof2}

In this section, we turn to the case when the chipping rates are constant. As we have seen, $X=\bigcup_{F'\in\mathcal{F}^{(0)}}F'(Y)$ in this case, where $Y$ is the unique non-empty compact set satisfying \eqref{eq:constantchipping}. That is, $X$ is a finite union of affine images of the self-affine set $Y$. 

Let us now introduce two new definitions. A finite collection of $d\times d$ matrices $\{A_1,\ldots,A_m\}$ is called {\it proximal} if there exists a finite product $A_{i_1}\cdots A_{i_n}$ such that the maximal eigenvalue in modulus has multiplicity one. Moreover, we call the collection of $d\times d$ matrices $\{A_1,\ldots,A_m\}$ {\it strongly irreducible} if there is no finite collection of proper subspaces $V_1,\ldots,V_n$ of $\R^d$ such that $A_i\bigcup_{j=1}^nV_j\subseteq\bigcup_{j=1}^nV_j$ for every $i=1,\ldots,m$. 

We state the result of B\'ar\'any, Hochman and Rapaport \cite[Theorem~1.1]{BHR} and Morris and Sert \cite[Theorem~1.5]{MS}, which we intend to apply to prove Theorem~\ref{thm:main2}.

\begin{theorem}\label{thm:BHRMS}
	Let $\Phi=\{F_i(x)=A_ix+t_i\}_{i=1}^m$ be a finite collection of affine contractions on $\R^d$ for $d=2,3$. Suppose that the matrices $\{A_1,\ldots,A_m\}$ are invertible, proximal, strongly irreducible and there exists an open and bounded set $U$ such that $F_i(U)\cap F_j(U)=\emptyset$ for every $i\neq j$ and $F_i(U)\subseteq U$ for every $i$. Then
	$$
	\dim_H(X)=\inf\{s>0:\sum_{n=1}^\infty\sum_{j_1,\ldots,j_n=1}^m\varphi^s(A_{j_1}\cdots A_{j_n})<\infty\},
	$$
	where $X$ is the unique non-empty compact set such that $X=\bigcup_{i=1}^mF_i(X)$.
\end{theorem}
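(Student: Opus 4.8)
The plan is to prove the two bounds $\dim_H(X)\le s_0$ and $\dim_H(X)\ge s_0$ separately, where $s_0$ denotes the affinity dimension appearing in the statement. The upper bound is the classical, robust half and uses neither proximality nor strong irreducibility; the entire weight of the theorem — and the reason the names Bárány–Hochman–Rapaport and Morris–Sert are attached to it — lies in the lower bound.

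For the upper bound I would run Falconer's natural covering argument, closely following the proof of Lemma~\ref{lem:upperbound} but in arbitrary dimension $d\in\{2,3\}$. Fix $s>s_0$, so that $K:=\sum_{n}\sum_{j_1,\dots,j_n}\varphi^s(A_{j_1}\cdots A_{j_n})<\infty$. For small $\delta>0$ select the finite antichain $\mathcal M_\delta$ of words $w=(j_1,\dots,j_n)$ at which $\alpha_{\lceil s\rceil}(A_w)$ first drops below $\delta$. Each $F_w(U)$ is then contained in an ellipsoid with semi-axes comparable to $\alpha_1(A_w)\ge\cdots\ge\alpha_d(A_w)$, coverable by roughly $\prod_{i<\lceil s\rceil}\lceil\alpha_i(A_w)/\alpha_{\lceil s\rceil}(A_w)\rceil$ balls of radius $\delta$. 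Summing the singular value function over $\mathcal M_\delta$ bounds $N_\delta(X)\delta^{s}$, and sub-multiplicativity of $\varphi^s$ keeps the antichain sum below a constant multiple of $K$; hence $\mathcal H^s(X)<\infty$ and $\dim_H(X)\le s$. Letting $s\downarrow s_0$ gives the bound, and this half needs no irreducibility hypotheses at all.

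For the lower bound the plan is variational. I would introduce the Bernoulli measures $\mu_{\mathbf p}$ on the symbol space $\{1,\dots,m\}^{\mathbb N}$ and their self-affine projections $\nu_{\mathbf p}=\pi_*\mu_{\mathbf p}$ supported on $X$, where $\pi$ is the natural coding map. A pressure / convex-duality computation shows $\sup_{\mathbf p}\dim_L\nu_{\mathbf p}=s_0$, where the Lyapunov (affinity) dimension $\dim_L\nu_{\mathbf p}$ is the explicit quantity built from the entropy $h(\mathbf p)=-\sum p_i\log p_i$ and the Lyapunov exponents $\chi_1\ge\chi_2\ge\chi_3$ of the random matrix product driven by $\mathbf p$. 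Since $\dim_H(X)\ge\dim_H\nu_{\mathbf p}$, it suffices to prove that every such measure attains its Lyapunov dimension, $\dim_H\nu_{\mathbf p}=\dim_L\nu_{\mathbf p}$, and then pass to a sequence $\mathbf p$ with $\dim_L\nu_{\mathbf p}\to s_0$. The separation hypothesis $F_i(U)\cap F_j(U)=\emptyset$ enters here to guarantee that $\pi$ is essentially injective, so the symbolic entropy $h(\mathbf p)$ transfers faithfully to $\nu_{\mathbf p}$ without loss.

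The hard part is the equality $\dim_H\nu=\dim_L\nu$, and this is precisely where proximality and strong irreducibility are indispensable. First, these two hypotheses force, via Furstenberg's theory of random matrix products (Guivarc'h–Raugi, Goldsheid–Margulis), a simple Lyapunov spectrum and a unique stationary measure on the flag manifold of $\R^d$. Next I would invoke the Ledrappier–Young formula for self-affine measures, which writes $\dim_H\nu$ as a sum of the Lyapunov exponents weighted by the dimensions of the conditional measures of $\nu$ along the Oseledets flag — equivalently, by the dimensions of the projections of $\nu$ onto the relevant lines and planes, governed by the stationary measures on the Grassmannians $\mathrm{Gr}(1,d)$ and, for $d=3$, $\mathrm{Gr}(2,3)$. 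The theorem then reduces to showing that none of these projected or conditional measures suffers a dimension drop, so that the Ledrappier–Young sum equals the full $\dim_L\nu$. The main obstacle — the genuine technical core of both cited works — is ruling out this dimension drop without any exponential-separation (Diophantine) assumption on the matrices; the engine is Hochman's inverse theorem on the growth of entropy under convolution, which shows that a dimension drop would force an almost-affine concentration of the cylinder measures at all scales, ultimately producing an exact algebraic coincidence among the matrix products that is incompatible with strong irreducibility and proximality. Executing this in $\R^3$ (Morris–Sert) is substantially heavier than in $\R^2$ (Bárány–Hochman–Rapaport), because the flag now has two intermediate stages, so one must simultaneously control entropy on projective space and on the space of planes and handle the interaction between the two Furstenberg measures.
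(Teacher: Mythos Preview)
The paper does not prove this theorem at all: it is stated as an external input, with the sentence ``We state the result of B\'ar\'any, Hochman and Rapaport \cite[Theorem~1.1]{BHR} and Morris and Sert \cite[Theorem~1.5]{MS}, which we intend to apply to prove Theorem~\ref{thm:main2}.'' There is therefore nothing to compare your argument against; within this paper Theorem~\ref{thm:BHRMS} is a black box.

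That said, your sketch is a faithful high-level summary of how the cited papers actually proceed: Falconer's covering argument for the upper bound, and for the lower bound the variational principle reducing to $\dim_H\nu=\dim_L\nu$ for ergodic self-affine measures, which in turn is established via the Ledrappier--Young formula combined with Hochman's inverse theorem to rule out dimension drop of the Furstenberg conditionals under the proximality and strong irreducibility hypotheses. As a proof it is of course only an outline --- the entropy-growth argument that excludes dimension drop without any Diophantine separation is the genuine content of \cite{BHR} in the plane and of \cite{MS} in $\R^3$, and cannot be reproduced in a paragraph --- but nothing you wrote is wrong, and it correctly identifies where the hypotheses are used and why the three-dimensional case is harder. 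For the purposes of the present paper, however, simply citing \cite{BHR,MS} is what is done and what is expected.
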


Our main goal is now to verify the conditions of Theorem~\ref{thm:BHRMS} for the system in \eqref{eq:constantchipping}. For the convenience of the reader, let us recall that
$$
C_1=\begin{bmatrix}	1-2p & -p & -p \\ 0 & p & 0 \\ 0 & 0 & p\end{bmatrix},\ C_2=\begin{bmatrix}	p & 0 & 0 \\ -p & 1-2p & -p \\ 0 & 0 & p\end{bmatrix},\ C_3=\begin{bmatrix}	p & 0 & 0 \\ 0 & p & 0 \\ -p & -p & 1-2p\end{bmatrix}.
$$

\begin{lemma}\label{lem:proxy}
The matrices $\{C_1,C_2,C_3\}$ are proximal for every $0<p<1/2$.
\end{lemma}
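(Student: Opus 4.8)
The plan is to exhibit a single explicit finite product of the $C_i$ whose dominant eigenvalue is simple; by the definition of proximality, that is all that is needed. The matrix $C_1$ on its own will not do, since its eigenvalues are $1-2p,\ p,\ p$, and the repeated value $p$ becomes dominant once $p\ge 1/3$. Instead I would work with the length-two product $C_1C_2$, which turns out to have a simple leading eigenvalue for every $p\in(0,1/2)$.

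First I would compute
\[
C_1C_2=\begin{bmatrix} p-p^2 & -p+2p^2 & 0\\ -p^2 & p-2p^2 & -p^2\\ 0 & 0 & p^2\end{bmatrix}.
\]
Since the last row is $(0,0,p^2)$ and the $(1,3)$ entry vanishes, this matrix is block triangular: one eigenvalue is $p^2$, and the other two are the eigenvalues of the $2\times2$ block $M=\begin{bmatrix} p-p^2 & -p+2p^2\\ -p^2 & p-2p^2\end{bmatrix}$. A short computation gives $\operatorname{tr}M=p(2-3p)$, $\det M=p^2(1-2p)^2$, and discriminant $(\operatorname{tr}M)^2-4\det M=p^3(4-7p)$, which is strictly positive on $(0,1/2)$. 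Hence $M$ has two distinct real eigenvalues $\lambda_-<\lambda_+$, and because their sum $p(2-3p)$ and product $p^2(1-2p)^2$ are both positive for $0<p<1/2$, both are positive.

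It then remains to show $\lambda_+>p^2$, so that $\lambda_+$ is the unique eigenvalue of $C_1C_2$ of maximal modulus. Writing $q(\lambda)=\lambda^2-p(2-3p)\lambda+p^2(1-2p)^2$ for the characteristic polynomial of $M$, a direct evaluation gives $q(p^2)=p^2(2p-1)(4p-1)$. For $1/4<p<1/2$ this is negative, so $p^2$ lies strictly between $\lambda_-$ and $\lambda_+$. For $0<p\le 1/4$ it is nonnegative, but in that range $p^2$ lies to the left of the vertex $\tfrac12 p(2-3p)$ of the upward parabola $q$ (equivalently $p<2/5$), at which point $q$ takes a negative value; hence $p^2\le\lambda_-<\lambda_+$. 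In either case $\lambda_+>\max\{\lambda_-,p^2\}\ge 0$, so $C_1C_2$ has a simple dominant eigenvalue, and $\{C_1,C_2,C_3\}$ is proximal.

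The only mildly delicate point is the last paragraph: since the sign of $q(p^2)=p^2(2p-1)(4p-1)$ flips at $p=1/4$, one cannot quite avoid a short case distinction (or the parabola-vertex argument above) when verifying $\lambda_+>p^2$ uniformly over the whole range $0<p<1/2$. Everything else reduces to the displayed matrix multiplication and an elementary quadratic computation.
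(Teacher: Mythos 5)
Your proof is correct and rests on the same key computation as the paper: the product $C_1C_2$, its block-triangular form contributing the eigenvalue $p^2$, and the quadratic $q(\lambda)=\lambda^2-p(2-3p)\lambda+p^2(1-2p)^2$ giving the other two eigenvalues, whose larger root is shown to be the simple dominant one. The only organizational difference is that the paper settles $p<1/3$ using $C_1$ alone (whose top eigenvalue $1-2p$ is simple there) and invokes $C_1C_2$ only for $p\ge 1/3$, where indeed $\lambda_-<p^2<\lambda_+$, whereas you run the $C_1C_2$ argument over all of $(0,1/2)$ and therefore need your additional (correct) case $p\le 1/4$, where the sign of $q(p^2)=p^2(2p-1)(4p-1)$ flips and one only gets $p^2\le\lambda_-<\lambda_+$, which still yields a simple dominant eigenvalue.
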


\begin{proof}
Since the eigenvalues of the matrix $C_1$ are $1-2p$ with multiplicity one and $p$ with multiplicity two, the proximality is straightforward if $p<1/3$. For the case $p\geq1/3$, consider the product
$$
C_1C_2=\begin{bmatrix}
	p-p^2 & -(1-2p)p & 0 \\ -p^2 & (1-2p)p & -p^2 \\ 0 & 0 & p^2
\end{bmatrix},
$$
which has eigenvalues $$0<\frac{p\left(2-3p-\sqrt{(4-7p)p}\right)}{2}<p^2<\frac{p\left(2-3p+\sqrt{(4-7p)p}\right)}{2}$$ for $1/3\leq p<1/2$, which completes the proof.
\end{proof}

\begin{lemma}\label{lem:irred3}
The matrices $\{C_1,C_2,C_3\}$ are strongly irreducible for every $p\in(0,1/2)\setminus\{1/5\}$.
\end{lemma}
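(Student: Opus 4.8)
The plan is to suppose, toward a contradiction, that some finite union $\mathcal{W}=W_1\cup\dots\cup W_k$ of proper nonzero subspaces of $\R^3$ is invariant under $C_1,C_2,C_3$, with $k$ minimal. Since a subspace contained in a finite union of subspaces of $\R^3$ lies in one of them, minimality forces all the $W_\ell$ to share a dimension $d\in\{1,2\}$, and each $C_i$ — invertible, as $\det C_i=(1-2p)p^2\neq0$ — to permute $\{W_1,\dots,W_k\}$; hence $C_i^{k!}$ fixes every $W_\ell$, so each $W_\ell$ is a common invariant subspace of $C_1^{k!},C_2^{k!},C_3^{k!}$. For every $m\geq1$ the matrix $C_i^m$ has exactly the same invariant subspaces as $C_i$: if $p\neq1/3$ because $C_i$ is diagonalizable with the simple eigenvalue $1-2p$ and $(1-2p)^m\neq p^m$ on $(0,1/2)$, and if $p=1/3$ because $C_i^m=3^{-m}(I+mN_i)$ with $N_i^2=0$. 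Thus it suffices to rule out a common invariant $d$-dimensional subspace of $C_1,C_2,C_3$ themselves.

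The next step is to record the invariant subspaces of a single $C_i$. Writing $n_1=(1-3p,-p,-p)$, $n_2=(-p,1-3p,-p)$, $n_3=(-p,-p,1-3p)$ and $\pi_i=n_i^{\perp}$, the matrix $C_1$ has eigenvalue $1-2p$ with eigenvector $\underline{e}_1$ and eigenvalue $p$ with two-dimensional eigenspace $\pi_1$, so every $C_1$-invariant line lies in $\langle\underline{e}_1\rangle\cup\pi_1$; cyclically for $C_2,C_3$. Dually, $C_1^{T}$ has eigenvalue $1-2p$ with eigenvector $n_1$ and eigenvalue $p$ with eigenspace $\langle\underline{e}_2,\underline{e}_3\rangle$, so every $C_1^{T}$-invariant line lies in $\langle n_1\rangle\cup\langle\underline{e}_2,\underline{e}_3\rangle$; cyclically. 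The single computation where the hypothesis enters is
$$\det\begin{bmatrix}n_1&n_2&n_3\end{bmatrix}=\det\big((1-2p)I-pJ\big)=(1-5p)(1-2p)^2,$$
with $J$ the all-ones matrix, which is nonzero precisely when $p\in(0,1/2)\setminus\{1/5\}$; so in this range $\pi_1\cap\pi_2\cap\pi_3=\{\underline{0}\}$. For every $p\in(0,1/2)$ one also has $n_i\not\parallel n_j$, $\langle\underline{e}_i,n_j\rangle=-p\neq0$ for $i\neq j$, and $n_i$ has nonzero $j$-th coordinate whenever $j\neq i$.

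In the case $d=1$, a common invariant line $\ell$ of $C_1,C_2,C_3$ satisfies $\ell\subseteq\bigcap_{i=1}^{3}(\langle\underline{e}_i\rangle\cup\pi_i)$. Because $\langle\underline{e}_i,n_j\rangle\neq0$ for $i\neq j$, no coordinate axis lies in any $\pi_j$ with $j\neq i$, so $\ell$ equals no $\langle\underline{e}_i\rangle$ and hence $\ell\subseteq\pi_1\cap\pi_2\cap\pi_3=\{\underline{0}\}$, a contradiction. In the case $d=2$, passing to normal vectors turns the three invariant planes into a common invariant line $\ell'$ of $C_1^{T},C_2^{T},C_3^{T}$, so $\ell'\subseteq\bigcap_{i=1}^{3}\big(\langle n_i\rangle\cup\langle\underline{e}_j:j\neq i\rangle\big)$; using $n_i\not\parallel n_j$ and the coordinate pattern of the $n_i$, the same elimination forces $\ell'\subseteq\langle\underline{e}_1\rangle\cap\langle\underline{e}_2\rangle\cap\langle\underline{e}_3\rangle=\{\underline{0}\}$, again impossible (this case needs no restriction on $p$ beyond $p<1/2$). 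Together these contradict the existence of $\mathcal{W}$.

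The main obstacle — and the only place $p\neq1/5$ is used — is the determinant identity above: at $p=1/5$ one has $n_1+n_2+n_3=\underline{0}$ and $(1,1,1)$ becomes a common eigenvector of $C_1,C_2,C_3$ (eigenvalue $p$), so $\langle(1,1,1)\rangle$ is a common invariant line and strong irreducibility genuinely fails there. The remaining delicate point is the non-diagonalizable parameter $p=1/3$, where one must verify directly that the invariant-line picture — and the triple intersection $\pi_1\cap\pi_2\cap\pi_3$ — is unchanged; this is a routine computation.
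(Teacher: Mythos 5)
Your proposal is correct and takes essentially the same route as the paper's proof: argue by contradiction, use that the invertible matrices permute the invariant family so that a suitable power fixes each member, note that powers of $C_i$ have the same invariant subspaces as $C_i$, then eliminate a common invariant line via the eigenplanes with normals $n_1,n_2,n_3$ (your determinant $\det\begin{bmatrix}n_1 & n_2 & n_3\end{bmatrix}=(1-5p)(1-2p)^2$ is exactly where $p\neq 1/5$ enters, matching the paper's determinant criterion) and a common invariant plane by dualizing to the transposes (the paper uses $(C_i^{-1})^T$, which is equivalent). Your write-up is in fact a bit more explicit than the paper's at the two points it leaves to "simple calculations," namely why $C_i^{m}$ has the same invariant subspaces as $C_i$ and the non-diagonalizable case $p=1/3$.
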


\begin{proof}
Let us argue by contradiction. That is, suppose that there exists a finite collection of proper subspaces $V_1,\ldots,V_n$ of $\R^3$ such that $C_j\bigcup_{i=1}^nV_i\subseteq\bigcup_{i=1}^nV_i$ for every $j=1,2,3$. Since the matrices $\{C_1,C_2,C_3\}$ are invertible, the image of a two-dimensional subspace is a two-dimensional subspace. Let $\{W_1,\ldots,W_m\}\subseteq \{V_1,\ldots,V_n\}$ be the subset containing the two-dimensional subspaces. Using that the matrices are invertible, $C_j\bigcup_{i=1}^mW_i=\bigcup_{i=1}^mW_i$ for every $j=1,2,3$.

Let $W_i^\perp$ be the orthogonal complement of $W_i$. By a similar argument like in Lemma~\ref{lem:vectors}, we get that $(C_j^{-1})^T\bigcup_{i=1}^mW_i^\perp=\bigcup_{i=1}^mW_i^\perp$. In particular, for every $i=1,2,3$, there exists a permutation map $q_i\colon\{1,\ldots,m\}\to\{1,\ldots,m\}$ such that $(C_i^{-1})^TW_j=W_{q_i(j)}$. Since the permutations form a finite group, by Lagrange's theorem $q_i^{m!}$ is the identity map, i.e. $(C_i^{-m!})^TW_j=W_j$ for every $i=1,2,3$ and $j=1,\ldots,m$. That is, $W_j$ is a one-dimensional eigenspace of $(C_i^{-m!})^T$ and simple calculations show that $W_j$ must be a one-dimensional eigenspace of $(C_i^{-1})^T$ for every $j=1,\ldots,m$ and $i=1,2,3$. Hence, for $i=1$
$$
W_j^\perp=\left\{(a(1-3p), -pa, -pa):a\in\R\right\}\text{ or }W_j^\perp\subset\left\{(0, a, b):a,b\in\R\right\}.
$$
If $p\neq1/3$, then the first case is not possible since it is not invariant with respect to $(C_2^{-1})^T$. So $$W_j^\perp\subset\left\{(0, a, b):a,b\in\R\right\}\cap\left\{(a, 0, b):a,b\in\R\right\}\cap\left\{(a, b, 0):a,b\in\R\right\},$$ which is impossible. If $p=1/3$ then the subspace in the first case is contained in the subspace in the other possibility, and so, every subspace in the collection $\{V_1,\ldots,V_n\}$ must be one-dimensional.

However, similar argument shows that then every subspace $V_j$ must be a one-dimensional eigenspace of every matrix $C_i$. It is easy to see that the eigenspace of the eigenvalue $1-2p$ of any of the three matrices is not invariant with respect to the other matrices. If $p=1/3$ then there are no further eigenspaces. If $p\neq1/3$, then simple algebraic manipulations show that every vector in the two-dimensional eigenspace of $C_i$ corresponding to the eigenvalue $p$ is an eigenvector. The three two-dimensional eigenspace of the matrices $\{C_1,C_2,C_3\}$ has non-trivial intersection if and only if 
$$
0=\det\left(\begin{bmatrix}
	1 & \frac{-p}{1-3p} & \frac{-p}{1-3p} \\
	\frac{-p}{1-3p} & 1 & \frac{-p}{1-3p} \\
	\frac{-p}{1-3p} & \frac{-p}{1-3p} & 1
\end{bmatrix}\right),
$$
where the column vectors above are the normal vectors of the two-dimensional eigenspaces corresponding to the eigenvalue $p$. But the equation above holds if and only if $p=1/5$, which completes the proof. 
\end{proof}

Now, we study the case when $p=1/5$. Let us recall from Section~\ref{sec:singvalsep} that $\proj\colon\R^3\to V$ denotes the orthogonal projection to the subspace $V$ perpendicular to the vector $(1,1,1)$. If $p=1/5$ then the vector $(1,1,1)$ is an eigenvector for all of the matrices $C_1,C_2$ and $C_3$ with eigenvalue $1/5$. Thus, the linear map $D_j:=\proj C_j|_V\colon V\to V$ satisfies that
\begin{equation}\label{eq:inv}
D_j\proj(\underline{v})=\proj(C_j\underline{v})\text{ for every }\underline{v}\in\R^3.
\end{equation}
It is easy to see that $D_j$ is invertible for every $j=1,2,3$.

\begin{lemma}\label{lem:irred2}
	For $p=1/5$, the linear maps $\{D_1,D_2,D_3\}$ are strongly irreducible on $V$.
\end{lemma}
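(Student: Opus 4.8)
The plan is to prove strong irreducibility by contradiction, reducing it to the statement that $D_1$, $D_2$ and $D_3$ have no common one-dimensional invariant subspace of $V$. The first step is to determine the eigendata of each $D_j$. Since $(1,1,1)$ spans $\ker\proj$ and is an eigenvector of every $C_j$ with eigenvalue $1/5$, the identity~\eqref{eq:inv} transports eigenvectors of $C_j$ to eigenvectors of $D_j$ (whenever the projection is nonzero): from $C_j\underline{e}_j=(1-2p)\underline{e}_j$ we get that $\R\proj(\underline{e}_j)$ is a $D_j$-eigenline with eigenvalue $3/5$, and the two-dimensional $1/5$-eigenplane of $C_j$ (which contains $(1,1,1)$, so projects onto a line) yields a $D_j$-eigenline with eigenvalue $1/5$, represented by the vectors $(0,1,-1)$, $(1,0,-1)$, $(1,-1,0)$ for $j=1,2,3$ respectively. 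A one-line computation of trace and determinant in a basis of $V$ (using the cyclic coordinate symmetry that conjugates $C_1\to C_2\to C_3$ and fixes $V$) confirms that each $D_j$ has exactly the two distinct positive eigenvalues $3/5$ and $1/5$, hence exactly the two eigenlines just described. Since
\[
(2,-1,-1),\ (-1,2,-1),\ (-1,-1,2),\ (0,1,-1),\ (1,0,-1),\ (1,-1,0)
\]
(the first three being $\proj(\underline{e}_1),\proj(\underline{e}_2),\proj(\underline{e}_3)$ up to scaling) are pairwise non-parallel, no line is an eigenline of two distinct $D_j$'s, and in particular $D_1,D_2,D_3$ have no common eigenline.

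For the contradiction, assume $\{W_1,\dots,W_m\}$ is a non-empty finite family of one-dimensional subspaces of $V$ — the only nonzero proper subspaces of the plane $V$ — with $D_j\bigl(\bigcup_i W_i\bigr)\subseteq\bigcup_i W_i$ for $j=1,2,3$; since the $D_j$ are invertible, equality holds and each $D_j$ permutes the $W_i$. Each $D_j$ therefore induces an element of the symmetric group on $\{W_1,\dots,W_m\}$, so by Lagrange's theorem $D_j^{m!}$ fixes each $W_i$. As $D_j$ has distinct positive eigenvalues $3/5\neq1/5$, the power $D_j^{m!}$ has distinct eigenvalues $(3/5)^{m!}\neq(1/5)^{m!}$, hence exactly the two invariant lines of $D_j$; thus each $W_i$ is simultaneously an eigenline of $D_1$, $D_2$ and $D_3$, which we have just excluded. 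So the family must be empty, a contradiction, and $\{D_1,D_2,D_3\}$ is strongly irreducible on $V$.

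The argument is essentially a short explicit linear-algebra computation, so I do not expect a genuine obstacle. The point deserving a little care is the reduction from ``$D_j^{m!}$ fixes the line $W_i$'' to ``$W_i$ is one of the two eigenlines of $D_j$'': this uses that a $2\times2$ matrix with two distinct real eigenvalues has exactly two invariant lines and that taking powers preserves distinctness of the eigenvalues here (both being positive and unequal). Correctly listing the six eigenlines and checking their pairwise non-parallelism is the other place where a small slip could creep in.
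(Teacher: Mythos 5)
Your proof is correct and follows essentially the same route as the paper: argue by contradiction, use the permutation-of-lines/Lagrange argument to force each invariant line to be a common eigenline of $D_1,D_2,D_3$, and then contradict this by explicitly identifying the two eigenlines of each $D_j$ (projections of the $C_j$-eigenvectors via \eqref{eq:inv}) and noting they are pairwise distinct. Your only addition is to spell out the step the paper leaves implicit, namely that distinctness of the eigenvalues $3/5\neq 1/5$ makes the fixed lines of $D_j^{m!}$ exactly the eigenlines of $D_j$.
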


\begin{proof}
	Let us again argue by contradiction. Let $\{W_1,\ldots,W_m\}$ be one-dimensional proper subspaces of $V$ such that $D_j\bigcup_{i=1}^mW_i=\bigcup_{i=1}^mW_i$. Similar argument to Lemma~\ref{lem:irred3} shows that every $W_i$ must be an eigenspace of $D_j$ for every $j=1,2,3$. By \eqref{eq:inv}, the orthogonal projections of the eigenvectors of $C_j$ are the eigenvectors of $D_j$. Hence, for the matrix $D_1$, 
	$$
	W_j=\{(0,a,-a):a\in\R\}\text{ or }W_j=\{(2a,-a,-a):a\in\R\}
	$$ for every $j=1,\ldots,m$. However, none of these subspaces are invariant with respect to $D_2$, which is a contradiction.
\end{proof}

\begin{proof}[Proof of Theorem~\ref{thm:main2}]
	Our goal is to show that the maps in \eqref{eq:constantchipping} satisfies the conditions in Theorem~\ref{thm:BHRMS}. Since $X$ is a finite union of affine images of $Y$ defined in \eqref{eq:constantchipping}, the claim of the theorem follows. 
	
	First, let us suppose that $p\in(0,1/2)\setminus\{1/5\}$. For simplicity, let $G_i(\underline{x})=C_i\underline{x}+2p\underline{e}_i$. Let $W$ be the tetrahedron defined by the vectors $\{\underline{0},\underline{e}_1,\underline{e}_2,\underline{e}_3\}$. Then by \eqref{eq:cont}, $G_i(W^o)\subset W^o$ for every $i=1,2,3$. The claim $G_i(W^o)\cap G_j(W^o)=\emptyset$ for $i\neq j$ clearly follows by Lemma~\ref{lem:projosc}. The proximality follows by Lemma~\ref{lem:proxy} and the strong irreducibility follows by Lemma~\ref{lem:irred3}.
	
	Let us turn to the case $p=1/5$. Let $g_i\colon V\to V$ be the affine maps $g_i(\underline{x})=D_i\underline{x}+2p\proj(\underline{e}_i)$. Hence, $\proj\circ G_i=g_i\circ\proj$ by \eqref{eq:inv}. Thus, $\proj(Y)$ is the unique non-empty compact set such that $\proj(Y)=\bigcup_{i=1}^3g_i(\proj(Y))$. By Lemma~\ref{lem:projosc}, we have that $g_i(T_0^o)\subset T_0^o$ for every $i=1,2,3$ and $g_i(T_0^o)\cap g_j(T_0^o)=\emptyset$ for every $i\neq j$. The linear maps $D_1,D_2,D_3$ are clearly proximal, and are strongly irreducible by Lemma~\ref{lem:irred2}. Hence,
	\begin{equation}\label{eq:lowerhaus}
	\dim_H(Y)\geq\dim_H(\proj(Y))=\inf\left\{s>0:\sum_{n=1}^\infty\sum_{j_1,\ldots,j_n=1}^3\varphi^s(D_{j_1}\cdots D_{j_n})<\infty\right\}.
	\end{equation}
	By \eqref{eq:bounds} and \eqref{eq:restrictedsingvalue}, there exists $C>0$ such that
	$$
	\alpha_1(D_{j_1}\cdots D_{j_n})\geq C\alpha_1(C_{j_1}\cdots C_{j_n})\text{ and }
	$$
	$$
	\alpha_1(D_{j_1}\cdots D_{j_n})\alpha_2(D_{j_1}\cdots D_{j_n})\geq C\alpha_1(C_{j_1}\cdots C_{j_n})\alpha_2(C_{j_1}\cdots C_{j_n}),
	$$
	which implies that the right-hand side of \eqref{eq:lowerhaus} equals $s_0$, which completes the proof.
\end{proof}
}

\bibliographystyle{abbrv}
\bibliography{bib1}

\begin{thebibliography}{10}

\bibitem{BHR}
B.~B\'{a}r\'{a}ny, M.~Hochman, and A.~Rapaport.
\newblock Hausdorff dimension of planar self-affine sets and measures.
\newblock {\em Invent. Math.}, 216(3):601--659, 2019.

\bibitem{barany_2024}
I.~B\'ar\'any and G.~Domokos.
\newblock Same average in every direction.
\newblock {\em arXiv:2310.18960}, 2024.

\bibitem{Bloore1977}
F.~J. Bloore.
\newblock The shape of pebbles.
\newblock {\em Journal of the International Association for Mathematical
  Geology}, 9(2):113--122, Apr 1977.

\bibitem{BoMo}
J.~Bochi and I.~D. Morris.
\newblock Continuity properties of the lower spectral radius.
\newblock {\em Proc. Lond. Math. Soc. (3)}, 110(2):477--509, 2015.

\bibitem{Domokos_Gibbons_2012}
G.~Domokos and G.~W. Gibbons.
\newblock The evolution of pebble size and shape in space and time.
\newblock {\em Proc. Roy. Soc. A.}, 468:3059--3079, 2012.

\bibitem{Domokos2014}
G.~Domokos, D.~Jerolmack, A.~Sipos, and A.~T\"or\"ok.
\newblock How {River} {Rocks} {Round}: {Resolving} the {Shape}-{Size}
  {Paradox}.
\newblock {\em PLoS ONE}, 9(2):e88657, Feb. 2014.

\bibitem{domokos_2014}
G.~Domokos, D.~Jerolmack, A.~A. Sipos, and A.~T\"or\"ok.
\newblock How river rocks round: Resolving the shape-size paradox.
\newblock {\em PloS One}, 9:195--218, 2014.

\bibitem{Domokos18178}
G.~Domokos, D.~J. Jerolmack, F.~Kun, and J.~T{\"o}r{\"o}k.
\newblock Plato{\textquoteright}s cube and the natural geometry of
  fragmentation.
\newblock {\em Proceedings of the National Academy of Sciences},
  117(31):18178--18185, 2020.

\bibitem{domokos_universality_2015}
G.~Domokos, F.~Kun, A.~A. Sipos, and T.~Szab\'o.
\newblock Universality of fragment shapes.
\newblock {\em Scientific Reports}, 5:9147, 2015.

\bibitem{domokos_2009}
G.~Domokos, A.~A. Sipos, G.~M. Szab\'o, and P.~V\'arkonyi.
\newblock Formation of sharp edges and planar areas of asteroids by polyhedral
  abrasion.
\newblock {\em The Astrophysical Journal Letters}, 699:L13--L16, 2009.

\bibitem{Falb2}
K.~Falconer.
\newblock {\em Fractal geometry}.
\newblock John Wiley \& Sons, Ltd., Chichester, third edition, 2014.
\newblock Mathematical foundations and applications.

\bibitem{Fal}
K.~J. Falconer.
\newblock {\em The geometry of fractal sets}, volume~85 of {\em Cambridge
  Tracts in Mathematics}.
\newblock Cambridge University Press, Cambridge, 1986.

\bibitem{Fal1}
K.~J. Falconer.
\newblock The {H}ausdorff dimension of self-affine fractals.
\newblock {\em Math. Proc. Cambridge Philos. Soc.}, 103(2):339--350, 1988.

\bibitem{Fal2}
K.~J. Falconer.
\newblock The dimension of self-affine fractals. {II}.
\newblock {\em Math. Proc. Cambridge Philos. Soc.}, 111(1):169--179, 1992.

\bibitem{Firey1974}
W.~Firey.
\newblock Shapes of worn stones.
\newblock {\em Mathematika}, 21(1):1--11, 1974.

\bibitem{Domokos_Sipos_Varkonyi_2009}
P.~V. G.~{Domokos}, A.~{Sipos}.
\newblock Continuous and discrete models for abrasion processes.
\newblock {\em Periodica Polytechnica}, 40(1):3--8, 2009.

\bibitem{Hamilton_Wornstones}
R.~Hamilton.
\newblock Worn stones with flat sides.
\newblock {\em Discourses Math. Appl.}, 3:69--78, 1994.

\bibitem{HR}
M.~Hochman and A.~Rapaport.
\newblock Hausdorff dimension of planar self-affine sets and measures with
  overlaps.
\newblock {\em J. Eur. Math. Soc. (JEMS)}, 24(7):2361--2441, 2022.

\bibitem{JPS}
T.~Jordan, M.~Pollicott, and K.~Simon.
\newblock Hausdorff dimension for randomly perturbed self affine attractors.
\newblock {\em Comm. Math. Phys.}, 270(2):519--544, 2007.

\bibitem{Ludmany_Domokos_(2018)}
B.~Ludmány and G.~Domokos.
\newblock Identification of primary shape descriptors on 3d scanned particles.
\newblock {\em Periodica Polytechnica Electrical Engineering and Computer
  Science}, 62(2):59--64, (2018).

\bibitem{Morris22}
I.~D. Morris.
\newblock Fast approximation of the affinity dimension for dominated affine
  iterated function systems.
\newblock {\em Ann. Fenn. Math.}, 47(2):645--694, 2022.

\bibitem{MS}
I.~D. Morris and C.~Sert.
\newblock A variational principle relating self-affine measures to self-affine
  sets.
\newblock {\em arXiv:2303.03437}, 2023.

\bibitem{Novak-Szaboeaao4946}
T.~Nov{\'a}k-Szab{\'o}, A.~{\'A}. Sipos, S.~Shaw, D.~Bertoni, A.~Pozzebon,
  E.~Grottoli, G.~Sarti, P.~Ciavola, G.~Domokos, and D.~J. Jerolmack.
\newblock Universal characteristics of particle shape evolution by bed-load
  chipping.
\newblock {\em Science Advances}, 4(3), 2018.

\bibitem{szabo_universal_2018}
T.~Novák-Szab\'o, A.~A. Sipos, S.~Shaw, D.~Bertoni, A.~Pozzebon, E.~Grottoli,
  G.~Sarti, P.~Ciavola, G.~Domokos, and D.~J. Jerolmack.
\newblock Universal characteristics of particle shape evolution by bed-load
  chipping.
\newblock {\em Science Advances}, 4(3):eaao4946, Mar. 2018.

\bibitem{pal_2021}
G.~P\'al, G.~Domokos, and F.~Kun.
\newblock Curvature flows, scaling laws and the geometry of attrition under
  impacts.
\newblock {\em Scientific Reports}, 11:20661, 2021.

\bibitem{PoVy15}
M.~Pollicott and P.~Vytnova.
\newblock Estimating singularity dimension.
\newblock {\em Math. Proc. Cambridge Philos. Soc.}, 158(2):223--238, 2015.

\bibitem{R}
A.~Rapaport.
\newblock On self-affine measures associated to strongly irreducible and
  proximal systems.
\newblock {\em Advances in Mathematics}, 449:109734, 2024.

\bibitem{Redner_Krapivsky_2007}
S.~Redner and P.~Krapivsky.
\newblock Smoothing a rock by chipping.
\newblock {\em Physical Review E}, 75, 2007.

\end{thebibliography}

\end{document}